\documentclass[11pt]{amsart}

\usepackage{verbatim, amssymb,hyperref}

\usepackage{color}
\usepackage{bbm}
\usepackage{graphicx}
\usepackage{subcaption}
\usepackage{todonotes}
\usepackage[english]{cleveref}

\newcommand{\eps}{\varepsilon}

\newcommand{\cD}{\mathcal D}

\newcommand{\cO}{\mathcal O}

\renewcommand{\P}{{\mathbb P}}

\newcommand{\N}{{\mathbb N}}

\newcommand{\E}{{\mathbb E}}

\newcommand{\R}{{\mathbb R}}

\theoremstyle{plain}

\newtheorem{theorem}{Theorem}[section]

\newtheorem{lemma}[theorem]{Lemma}
\newtheorem{corollary}[theorem]{Corollary}
\newtheorem{definition}[theorem]{Definition}

\theoremstyle{definition}
\newtheorem{remark}[theorem]{Remark}

\newtheorem{assumption}[theorem]{Assumption}

\definecolor{sk}{rgb}{0.8,0.1,0.1}

\definecolor{am}{rgb}{0.1,0.1,0.8}

\title[Strong Averaging Principle and Long-Time Dynamics for Fast-Slow SDEs]%
{Strong Averaging Principle and Long-Time Dynamics for Fast-Slow SDEs with Increasing Time-Scale Separation and Degenerate Noise}
\author[]%[Kassing]
{Sebastian Kassing}
\address{Sebastian Kassing\\
	Department of Mathematics \& Informatics,  
  University of Wuppertal,
    42119 Wuppertal, Germany}
\email{kassing@uni-wuppertal.de}

\author[]%[Miwa]
{Asuto Miwa}
\address{Asuto Miwa\\
	Department of Mathematics \& Informatics,  
  University of Wuppertal,
    42119 Wuppertal, Germany}
\email{amiwa@uni-wuppertal.de}

\keywords{fast-slow SDEs; strong averaging principle; asymptotic pseudo-trajectory; Poisson equation; stochastic optimization}
\subjclass[2020]{Primary 60H10; Secondary 37C50, 34C29, 60J60}

\begin{document}

\begin{abstract}
	We establish a strong averaging principle for fast-slow stochastic differential equations with a time-dependent scale-separation parameter $(\varepsilon_t)_{t \geq 0}$ satisfying $\eps_t \to 0$ as $t \to \infty$. 
    In contrast to approaches based on noise-induced smoothing or elliptic regularity, our approach relies on dissipativity of the frozen fast dynamics and therefore permits degenerate diffusion coefficients. We prove a maximal $L^p$-estimate between the slow variable and the averaged ODE at late times, with the classical strong convergence rate of order $1/2$. Under an additional decay condition on $(\eps_t)_{t \ge 0}$, this estimate implies that the slow variable is almost surely an asymptotic pseudo-trajectory of the averaged ODE. As a consequence, we obtain criteria for the identification of possible limit points and for convergence toward asymptotically stable equilibria for the slow variable by analyzing the dynamical behavior of the averaged equation. 
\end{abstract}

\maketitle

\section{Introduction}

Various real-world phenomena in finance, climate science, optimization, and natural sciences can be modeled by systems whose components evolve on different time-scales. 
Mathematically, these phenomena are often modeled using so-called \emph{fast-slow systems}. In the stochastic setting, these processes are typically of the form
\begin{align*}
    \begin{cases}
    d X_t^\varepsilon = f(X_t^\varepsilon,Y_t^\varepsilon) dt+ \sigma_1(X_t^\varepsilon,Y_t^\varepsilon)  dW_t^{(1)}, &X_0^\varepsilon=x_0 \in \R^{d_1},\\
    d Y_t^\varepsilon = \frac{1}{\varepsilon}g(X_t^\varepsilon,Y_t^\varepsilon)dt+ \frac{1}{\sqrt{\varepsilon}} \sigma_2(X_t^\varepsilon,Y_t^\varepsilon)  d W_t^{(2)}, &Y_0^\varepsilon=y_0 \in \R^{d_2},
    \end{cases}
\end{align*}
where $f:\R^{d_1+d_2} \to \R^{d_1}$, $g: \R^{d_1+d_2} \to \R^{d_2}$, $\sigma_1: \R^{d_1+d_2} \to \R^{d_1 \times d_1} $, $\sigma_2: \R^{d_1+d_2} \to \R^{d_2 \times d_2}$, $(W_t^{(1)})_{t \ge 0}$ and $(W_t^{(2)})_{t \ge 0}$ are independent Brownian motions, and $\eps>0$ is a small parameter that controls the time-scale separation between the slow variable $(X_t^{\eps})_{t \ge 0}$ and the fast variable $(Y_t^\eps)_{t \ge 0}$. Decreasing $\varepsilon$ corresponds to a larger time-scale separation.
See, e.g. \cite{FreidlinMarkI2012Rpod, berglund2006noise, pavliotis2008multiscale} for a detailed introduction to stochastic fast-slow systems.

As $\eps \to 0$, the dynamics of the fast variable can asymptotically be described by the so-called \emph{frozen SDE}
\begin{align} \label{eq:frozenintro}
    d \hat{Y}_t^x = g(x,\hat{Y}^x_t)dt+ \sigma_2(x,\hat{Y}^x_t) d\hat W_t^{(2)}
\end{align}
fixed at the current position $x \in \R^{d_1}$ of the slow variable, where $(\hat W_t^{(2)})_{t \ge 0}$ denotes a Brownian motion on the fast time-scale. Assuming ergodicity of the frozen SDE for all $x \in \R^{d_1}$, the dynamics of the slow variable can, in turn, asymptotically be described by the averaged SDE
\begin{align*}
    d\bar X_t = \bar{f}(\bar X_t)dt+ \bar a^{1/2}(\bar X_t) dW_t^{(1)},
\end{align*}
where 
$$
    \bar f(x)= \int f(x,y) \, d\mu_x(y) \quad \text{ and } \quad  \bar a(x)= \int \sigma_1(x,y)\sigma_1(x,y)^\top \, d\mu_x(y)
$$
denote the coefficients $f$ and $\sigma_1$ integrated against the invariant measure $\mu_x$ of the frozen SDE \eqref{eq:frozenintro}.
The first result on this averaging principle goes back to Khasminskii \cite{Khasminskij1968OnTP}, where weak convergence was established. Convergence in probability is shown in \cite[Chapter 7.9]{FreidlinMarkI2012Rpod} under the assumption that the diffusion coefficient $\sigma_1$ depends only on the slow variable $X^\varepsilon$, while \cite[Chapter 7.2]{FreidlinMarkI2012Rpod} proves almost sure convergence to the averaged system under a different set of assumptions, including the condition $\sigma_1=0$.  
See also \cite{cerrai2009averaging, liu2010strong, BrehierCharles-Edouard2020Ooci,
FuHongbo2015Scia, de_Feo_2023, sun2024averaging} and the references mentioned therein for results deriving an approximation error between the slow variable $(X_t^\eps)_{t\in[0,T]}$ and the
solution $(\bar X_t)_{t\in[0,T]}$ of the averaged equation on compact time intervals.
For example, in \cite{de_Feo_2023}, $L^2$-convergence is shown to hold with a convergence rate of $\mathcal{O}(\sqrt{\varepsilon})$ for a fast-slow SDE defined on an infinite-dimensional Hilbert space. The same rate of convergence  is also obtained, for example, in  \cite{BrehierCharles-Edouard2020Ooci} or \cite{FuHongbo2015Scia}.
Moreover, central limit theorems are derived, e.g., in \cite{CerraiSandra2009Ndft, KellyDavid2017Fith}, and large deviation principles are proved in \cite{BouchetFreddy2016LDiF}. See also \cite{akyildiz2024multiscale} for a uniform-in-time weak averaging principle under strong contraction property of the averaged drift. 

Recently, the theory of multi-timescale dynamical systems has been applied to stochastic optimization algorithms with multiple hyperparameters; see e.g. \cite{borkar1997stochastic, tadic2004almost, borkar2008stochastic, borkar2025stochastic} for a general framework in optimization and \cite{dereich2024convergence, dereich2025ode} for an analysis of the Adam algorithm in a fast-slow scaling regime. The core idea is to provide a transparent and theoretically grounded basis for hyperparameter selection by analyzing an asymptotic regime in which one set of hyperparameters is chosen significantly smaller compared to others. 
For example, recent works suggest applying the Adam algorithm with the friction parameters being fixed while the learning rate follows a schedule, typically decaying over time; see e.g. \cite{wilson2017marginal, loshchilov2018decoupled,dereichetal2024nonconvergence}. 

Another application is the optimization of a cost functional depending on the invariant distribution of a parameterized family of Markov processes, e.g. in energy-based models \cite{nijkamp2020anatomy}. 
In this setting, one approach is to gradually increase the observation time of the Markov process to decrease the bias and guarantee convergence. In Section~\ref{sec:applications}, we will discuss these applications in more detail.

Motivated by the above problem classes, in this work, we derive an averaging principle for a fast-slow SDE with a time-dependent parameter $(\varepsilon_t)_{t \geq 0}$ that satisfies $\eps_t \to 0$ as $t \to \infty$. Hence, a convergence of the slow variable to the averaged trajectory does not happen for a family of SDEs parametrized by $\varepsilon \in (0,1]$ over a fixed time horizon, but rather for a solution to a single SDE after shifting the comparison interval to infinity. 

We derive a strong $L^p$-approximation error and prove almost sure convergence for the slow variable to the averaged ODE on compact intervals started at late times. Afterwards, we prove convergence statements for the slow variable as $t \to \infty$ by analyzing the asymptotic behavior of the averaged ODE. Our approach follows the ODE method and uses the notion of asymptotic pseudo-trajectories, which is a common technique in optimization for the asymptotic analysis of discrete-time algorithms; see e.g. \cite{ljung1977analysis, benveniste1990adaptive, benaim1996dynamical, benaim2006dynamics}.

Let us fix the mathematical framework. Let $d_1,d_2 \in \N$, let $(\Omega, \mathcal F, (\mathcal F_t)_{t \ge 0}, \P)$ be a filtered probability space satisfying the usual conditions, and let $(W_t)_{t \ge 0}$ be a $d_2$-dimensional $(\mathcal F_t)$-Brownian motion.
We consider the \textit{fast-slow SDE}
\begin{align}\label{eq:fastslow0}
\begin{cases}
dX_t = f(X_t,Y_t) dt, \\ 
dY_t =  \frac{1}{\varepsilon_t}g(X_t,Y_t)dt + \frac{1}{\sqrt{\varepsilon_t}}\sigma(X_t,Y_t)  dW_t, 
\end{cases}
\end{align}
where $f \colon \R^{d_1 + d_2} \to \R^{d_1}$, $g \colon \R^{d_1 + d_2} \to \R^{d_2}$, $\sigma: \R^{d_1+d_2} \to \R^{d_2\times d_2}$, and $(\eps_t)_{t \ge 0}$ is a strictly positive function.

Next, we present our assumptions on the coefficients $f,g$ and $\sigma$, as well as on the time-separation parameter $(\eps_t)_{t \ge 0}$. 
Subsequently, we denote by $|v|$ the Euclidean norm of a vector $v$, by $|A|$ and $|A|_F$ the induced operator norm and the Frobenius norm of a matrix $A$, respectively, and by $|B|$ the bilinear operator norm of a bilinear function $B$. The vector space corresponding to the norm will be clear from the context.

A standard approach for fast-slow SDEs is to rely on the smoothing property of elliptic SDEs, see e.g. \cite{Veretennikov_1991, pardoux2001poisson, pavliotis2008multiscale, roeckner2021diffusion}. However, in practice the noise is often degenerate due to invariances in the parametrization, e.g. for neural networks, or due to multiple equations being driven by the same noise, e.g. for the Adam algorithm \cite{kingma2014adam}. In the absence of ellipticity assumptions, such as a validation of the Hörmander condition, a key step to obtain an averaging principle is to prove regularity results for the Markov semigroup of the frozen fast SDE. This motivates the following set of assumptions. 

\begin{assumption} \label{assu:regularity}
    The drift and diffusion coefficients $f \colon \R^{d_1+d_2}\to\R^{d_1}$, $g \colon \R^{d_1+d_2}\to\R^{d_2}$, and $\sigma = (\sigma_1,\dots,\sigma_{d_2}) \colon \R^{d_1+d_2}\to\R^{d_2 \times d_2}$ satisfy the following conditions:\footnote{Although (i) follows from the uniform boundedness of the derivatives imposed in (iii), we state these conditions separately to distinguish the assumptions required for well-posedness, moment bounds, and ergodicity from the smoothness assumptions needed for the regularity of the Poisson corrector.}
    \begin{enumerate}
        \item[(i)] $f$, $g$, $\sigma$ are globally Lipschitz, and in particular, show linear growth in $y$.
        \item[(ii)] $f$, $g$, $\sigma$ are uniformly bounded at $y=0$, i.e. 
        \begin{align*}
            \sup_{x \in \R^{d_1}} (|f(x,0)| + |g(x,0)| + |\sigma(x,0)|) <\infty.
        \end{align*}
        \item[(iii)]
        $f$, $g$, $\sigma_1, \dots, \sigma_{d_2}$ are twice continuously differentiable in $y$ and once continuously differentiable in $x$, all the derivatives below being jointly continuous on $\R^{d_1+d_2}$, and 
        \begin{align*} 
         \sup_{(x,y) \in \R^{d_1+d_2}} \Big(&|D_x f(x,y)| 
            + |D_y f(x,y)| % für D_y Phi
            + |D_{y,x}^2 f(x,y)| 
            + |D_y^2 f(x,y)|  \\
            &+|D_x g(x,y)| % für D_x Phi 
            + |D_y g(x,y)| % für D_y Phi 
            + |D_{y,x}^2g(x,y)| % für D_x Phi 
            + |D_y^2g(x,y)| % für D_x Phi 
              \\
              &+ \sup_{k=1, \dots, d_2} \Big( |D_x \sigma_k(x,y)|
            + |D_y \sigma_k(x,y)|+ |D_{y,x}^2 \sigma_k(x,y)| % für D^2_y Phi
            + |D_y^2 \sigma_k(x,y)| \Big) \Big)
            <\infty.
        \end{align*}
    \end{enumerate}
\end{assumption}

\begin{assumption} \label{assu:epsilon}
    $(\varepsilon_t)_{t \geq 0}$ is a non-increasing, continuous function satisfying $\varepsilon_t \to 0$ for $t \to \infty$.
\end{assumption}

The following dissipativity assumption guarantees ergodicity of the frozen fast SDE, as well as a contraction property of the corresponding semigroup.

\begin{definition} \label{def:fastdissi}
    Let $p \geq 2$ and $\beta > 0$. We say that the fast variable $(Y_t)_{t \ge 0}$ is \emph{$p$-dissipative (with constant $\beta$)}, if for all $x\in \R^{d_1}$ and $y_1,y_2 \in \R^{d_2}$
\begin{align*}
    \langle y_1 - y_2, g(x,y_1)-g(x,y_2) \rangle + \frac{p-1}{2} |\sigma(x,y_1)-\sigma(x,y_2)|_F^2 \leq -\beta|y_1-y_2|^2.
\end{align*}
\end{definition}

\begin{remark} \label{rem:dissibigger}
    Under Assumption~\ref{assu:regularity}, $p$-dissipativity implies $p+\eps$-dissipativity for an $\eps >0$. In fact, assume that $\sigma$ is Lipschitz continuous with Lipschitz constant $L_\sigma>0$. If $Y$ is $p$-dissipative with constant $\beta$, then for every $q\ge p$,
\begin{align*}
\langle y_1-y_2,g(x,y_1)-g(x,y_2)\rangle
+\frac{q-1}{2}|\sigma(x,y_1)-\sigma(x,y_2)|_F^2 \le
-\Big(\beta-\frac{q-p}{2}L_\sigma^2\Big)|y_1-y_2|^2.
\end{align*}
Hence, whenever
$
q \in (p, p+\frac{2\beta}{L_\sigma^2}),
$
the process is $q$-dissipative, with constant
$
\beta_q=\beta-\frac{q-p}{2}L_\sigma^2>0.
$
\end{remark}

On another filtered probability space $(\hat{\Omega}, \hat{\mathcal{F}}, (\hat{\mathcal{F}}_t)_{t \geq 0}, \hat{\P})$ satisfying the usual conditions with $d_2$-dimensional Brownian motion $(\hat{W}_t)_{t \ge 0}$, for $x \in \R^{d_1}$ and $y \in \R^{d_2}$, we can define the \textit{frozen fast SDE}
\begin{align}\label{eq:frozen0}
d\hat{Y}^{x,y}_t = g(x,\hat{Y}^{x,y}_t)dt + \sigma(x, \hat{Y}_t^{x,y})  d\hat{W}_t
\end{align}
with initial condition $\hat{Y}^{x,y}_0=y$.
This describes the dynamics of the fast variable in the large time-separation limit, where the slow variable is replaced by a fixed frozen value $x$.

Under Assumption~\ref{assu:regularity}, for every $x \in\R^{d_1}$ there exists a unique strong solution to the SDE \eqref{eq:frozen0}. 
Moreover, if the fast variable $(Y_t)_{t \ge 0}$ is $p$-dissipative for a $p \ge 2$, the solution to \eqref{eq:frozen0} is an ergodic Markov process with a unique invariant distribution $\mu_x$ for each $x \in \R^{d_1}$; see Lemma~\ref{lem:invariantmoment}. This allows us to define the \textit{averaged function}
\begin{align}\label{eq:averagedF}
\bar{f}\colon \R^{d_1}\to \R^{d_1}\quad ,\quad \bar{f}(x) = \int f(x,y)\mu_x(dy)
\end{align}
and the \textit{averaged ODE}
\begin{align}\label{eq:averaged0}
\begin{cases}
\dot{\mathbf x}_t(x) = \bar{f}(\mathbf{x}_t(x)),\\
\mathbf{x}_0(x) = x
\end{cases}   
\end{align}
for any initial condition $x \in \R^{d_1}$. By the Lipschitz continuity of $\bar f$, see Lemma~\ref{lem:barfLipschitz}, there exists a unique global solution to the ODE \eqref{eq:averaged0}.

We are now in the position to state the main result of this article. 

\begin{theorem}\label{th:convergenceLp}
Let $p \geq 2$, $x \in \R^{d_1}$ and $y \in \R^{d_2}$. Assume that Assumptions~\ref{assu:regularity} and~\ref{assu:epsilon} hold and let $(X_t,Y_t)_{t\ge 0}$ be the unique solution of \eqref{eq:fastslow0} with initial condition $(X_0,Y_0)=(x,y)$. If the fast variable is $p$-dissipative then, for every $T > 0$, there exists a constant $C \ge 0$ such that for all $T_0 \ge 0$
\[
\E \bigg[ \sup_{t \in [T_0,T_0+T]}|X_t -\mathbf{x}_{t-T_0}(X_{T_0})|^p \bigg] \le C \eps_{T_0}^{p/2}
\]
where, for $\tilde x \in \R^{d_1}$, $(\mathbf x_t(\tilde x))_{t \ge 0}$ denotes the solution of the averaged ODE \eqref{eq:averaged0}.
\end{theorem}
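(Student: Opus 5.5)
We will use the Poisson-equation (corrector) method. Since no ellipticity is available, all regularity will come from the contraction of the frozen semigroup implied by $p$-dissipativity. For fixed $x$, let $\hat P^x_s$ denote the Markov semigroup of the frozen SDE \eqref{eq:frozen0} and $\mathcal L_x$ its generator. We define the corrector
\[
\Phi(x,y) = \int_0^\infty \big(\hat P^x_s f(x,\cdot)(y) - \bar f(x)\big)\, ds ,
\]
which formally solves $-\mathcal L_x \Phi(x,\cdot) = f(x,\cdot) - \bar f(x)$.

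\textbf{Step 1: regularity of $\Phi$.} Synchronous coupling and $p$-dissipativity give $\hat\E|\hat Y^{x,y_1}_s-\hat Y^{x,y_2}_s|^q \le e^{-q\beta s}|y_1-y_2|^q$ for $q \le p$, and by Remark~\ref{rem:dissibigger} also slightly beyond $p$. Differentiating the frozen flow in $y$ (first and second variation equations) and in $x$ then yields exponentially decaying bounds on $D_y \hat Y$, $D_y^2\hat Y$ and $D_x \hat Y$ in $L^q$. Using Assumption~\ref{assu:regularity}(iii), we conclude that $\Phi$ is $C^2$ in $y$ and $C^1$ in $x$. Moreover, $D_y\Phi$, $D_y^2\Phi$ and $D_x\Phi$ are bounded, possibly with at most linear growth in $|y|$ for the $x$-derivative, and $|\Phi(x,y)| \le C(1+|y|)$. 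The $x$-derivative requires differentiating the invariant measure $\mu_x$, or equivalently controlling $D_x \bar f$. For this we use Lemma~\ref{lem:barfLipschitz} and the analogous coupling argument. I expect this step to be the main technical obstacle, since the second derivative in $y$ needs the bilinear terms $D_y^2 g$ and $D_y^2\sigma_k$ together with moment bounds of order above $p$ on the first variation.

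\textbf{Step 2: moment bounds.} By $p$-dissipativity and Assumption~\ref{assu:regularity}(ii), Itô's formula for $|Y_t|^p$ gives $\sup_t \E|Y_t|^p<\infty$, uniformly in $T_0$. The key point is that the dissipative drift scales like $1/\eps_t$ and dominates the noise. Since $|f(x,y)|\le C(1+|y|)$ uniformly in $x$, this also yields $\E\sup_{t\in[T_0,T_0+T]}|X_t - X_{T_0}|^p \le C$. To obtain the bound for the supremum of $|Y|$, which we will need, we use the Burkholder–Davis–Gundy inequality, or else avoid it by only requiring sup-bounds for the martingale terms below.

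\textbf{Step 3: Itô expansion.} We apply Itô's formula to $\eps_t\Phi(X_t,Y_t)$. The $1/\eps_t$ terms produce $\mathcal L_{X_t}\Phi = -(f-\bar f)(X_t,Y_t)$, so that
\[
\int_{T_0}^{t} \big(f(X_s,Y_s)-\bar f(X_s)\big)ds = -\Big[\eps_s\Phi(X_s,Y_s)\Big]_{T_0}^t + \int_{T_0}^t \big(\dot\eps_s \Phi + \eps_s D_x\Phi\, f\big)\, ds + \int_{T_0}^t \sqrt{\eps_s}\, D_y\Phi\,\sigma\, dW_s .
\]
Here $\dot\eps$ is understood as the Stieltjes measure $d\eps_s$, which is legitimate because $\eps$ is monotone and continuous. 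The $d\eps_s$ term is bounded by $\sup|\Phi|\cdot(\eps_{T_0}-\eps_t)\le \eps_{T_0}\sup|\Phi|$. The boundary terms and the $D_x\Phi$ term are $O(\eps_{T_0})$ in $L^p$. The boundary term needs $\E\sup|Y|^p$ on the interval; if this is problematic, we bound it by the crude estimate $\eps\cdot\sup|Y| \lesssim \sqrt\eps$ via the BDG inequality. For the stochastic integral, BDG with the bounded $D_y\Phi$ and the linear growth of $\sigma$ gives an $L^p$-sup bound of order $\eps_{T_0}^{1/2}$, which is the dominant contribution. Altogether,
\[
\E\sup_{t\in[T_0,T_0+T]}\Big|\int_{T_0}^t (f(X_s,Y_s)-\bar f(X_s))\,ds\Big|^p\le C\eps_{T_0}^{p/2}.
\]

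\textbf{Step 4: Gronwall.} Write $X_t - \mathbf x_{t-T_0}(X_{T_0}) = \int_{T_0}^t (\bar f(X_s)-\bar f(\mathbf x_{s-T_0}(X_{T_0})))ds + \int_{T_0}^t (f-\bar f)(X_s,Y_s)\,ds$. Using the Lipschitz continuity of $\bar f$ (Lemma~\ref{lem:barfLipschitz}), a pathwise Gronwall argument bounds the running supremum of the error by $e^{LT}$ times the supremum of the fluctuation integral. Taking $p$-th moments and applying Step 3 gives the claim, with $C$ depending on $T$ but not on $T_0$.
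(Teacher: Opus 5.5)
Your architecture is the same as the paper's: the corrector $\Phi$ built from the frozen semigroup, It\^o's formula for $\eps_t\Phi(X_t,Y_t)$, the three-term splitting, and pathwise Gr\"onwall. However, the boundary term $\sup_{t\in[T_0,T_0+T]}\eps_t|\Phi(X_t,Y_t)|$ is not actually controlled. The bound $\sup_t\E|Y_t|^p<\infty$ gives no maximal estimate that is uniform in $T_0$. The slow window $[T_0,T_0+T]$ is a fast-time window of length $\int_{T_0}^{T_0+T}\eps_s^{-1}\,ds\to\infty$, so $\E[\sup_{t\in[T_0,T_0+T]}|Y_t|^p]$ diverges as $T_0\to\infty$; for an Ornstein--Uhlenbeck fast variable it already grows like $(\log\eps_{T_0}^{-1})^{p/2}$. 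Hence your claim that this term is $O(\eps_{T_0})$ in $L^p$ is false in general. Your fallback ``$\eps\sup|Y|\lesssim\sqrt{\eps}$ via BDG'' also has no mechanism behind it. Set $Z_t=\eps_tY_t$, so that $dZ_t=Y_t\,d\eps_t+g(X_t,Y_t)\,dt+\sqrt{\eps_t}\,\sigma(X_t,Y_t)\,dW_t$. Applying BDG to this equation leaves the drift $\int_{T_0}^t g(X_s,Y_s)\,ds$, which moment bounds control only by $O(1)$; on the $Y$-equation itself the factor $1/\eps_s$ makes it worse.

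The missing idea is to apply It\^o's formula to $|Z_t|^p$. Every drift term then carries a factor $\eps_t^{p-1}$ times a quantity with uniformly bounded expectation. The $d\eps_t$-contribution $-p\eps_t^{p-1}|Y_t|^p\,d(-\eps_t)$ is nonpositive. The martingale part is handled by BDG plus Young's inequality, absorbing a fraction of $\E[\sup|Z_t|^p]$, which is a priori finite by standard SDE estimates. This gives $\E[\sup_{t\in[T_0,T_0+T]}|\eps_tY_t|^p]\lesssim\eps_{T_0}^{p-1}$. The boundary term is therefore $O(\eps_{T_0}^{(p-1)/p})$, which is $O(\eps_{T_0}^{1/2})$ exactly because $p\ge 2$. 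Similarly, in the $d\eps_s$-term you cannot use $\sup|\Phi|$, since $\Phi$ grows linearly in $y$. Instead use Minkowski's inequality with respect to $d(-\eps_s)$ together with the marginal bound $\sup_s\|\Phi(X_s,Y_s)\|_{L^p}$.

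The second gap is the growth of $D_x\Phi$, on which you hedge. It is in general not bounded: for $f(x,y)=\sin(x)\tanh(y)$ and an Ornstein--Uhlenbeck fast variable it grows like $\log|y|$. Linear growth is not enough either. It gives $|D_x\Phi\,f|\lesssim 1+|y|^2$, so the $L^p$-bound on $\int\eps_s D_x\Phi\,f\,ds$ would need $\E|Y_s|^{2p}$, and $p$-dissipativity does not provide this. For instance, $g(x,y)=-ay$, $\sigma(x,y)=by+c$ in dimension one is $p$-dissipative exactly for $p<1+2a/b^2$, and its invariant law has finite moments only of order $<1+2a/b^2$. The paper closes this gap using that the first derivatives of the coefficients are bounded and Lipschitz in $y$, hence $\alpha$-H\"older for every $\alpha\in(0,1]$. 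Together with the exponential contraction of the frozen flow and of the joint flow $(\hat Y,D_x\hat Y)$, this yields $|D_x\Phi(x,y)|\le C_\alpha(1+|y|^\alpha)$. The term is then $O(\eps_{T_0})$ using the $p(1+\alpha)$-th moments of $Y$, which Remark~\ref{rem:dissibigger} supplies for small $\alpha$.
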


\section{Asymptotic Pseudo-Trajectories and Convergence of the Slow Variable}

Our main result, Theorem~\ref{th:convergenceLp}, has strong consequences for the asymptotic behavior of the slow variable $(X_t)_{t \ge 0}$ as $t \to \infty$. 
In fact, one can deduce convergence results for $(X_t)_{t \ge 0}$ by analyzing the limiting semiflow \eqref{eq:averaged0}. In the optimization literature, this technique is called \emph{ODE method} and builds on the notion of asymptotic pseudo-trajectories.

\begin{definition}
	Let $(y_t)_{t \in [0,\infty)}$ be a curve in $\R^{d_1}$. We say that $(y_t)_{t \in [0,\infty)}$ is an \emph{asymptotic pseudo-trajectory} of the averaged ODE \eqref{eq:averaged0} if and only if it holds 
	for all $ T \in (0,\infty) $ that
	\begin{align} \label{eq:349h93hge}
	\lim_{T_0 \to \infty} \sup_{t \in [T_0,T_0+T]} |y_t-\mathbf x_{t-T_0}(y_{T_0})|=0.
	\end{align}
\end{definition}

Combining Theorem~\ref{th:convergenceLp} with a Borel-Cantelli argument, one can show that, if $(\eps_t)_{t \ge 0}$ vanishes sufficiently fast, the slow variable $(X_t)_{t \ge 0}$ is almost surely an asymptotic pseudo-trajectory of the averaged ODE \eqref{eq:averaged0}.

\begin{corollary} \label{cor:APT}
    Let $p \geq 2$, $x \in \R^{d_1}$ and $y \in \R^{d_2}$. Assume that Assumptions~\ref{assu:regularity} and~\ref{assu:epsilon} hold and let $(X_t,Y_t)_{t\ge 0}$ be the unique solution of \eqref{eq:fastslow0} with initial condition $(X_0,Y_0)=(x,y)$. If the fast variable is $p$-dissipative and $\eps_t = O(t^{-q})$ for a $q \in (2/p, \infty)$ then $(X_t)_{t \ge 0}$ is almost surely an asymptotic pseudo-trajectory of the averaged ODE \eqref{eq:averaged0}.
\end{corollary}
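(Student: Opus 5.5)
The plan is to combine Theorem~\ref{th:convergenceLp} along integer starting times with a Borel--Cantelli argument, and then interpolate between integer starting times using the Lipschitz continuity of $\bar f$ (Lemma~\ref{lem:barfLipschitz}) and Gronwall's inequality.

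\emph{Step 1: integer starting times.} Fix $T>0$ and apply Theorem~\ref{th:convergenceLp} with horizon $T+1$. Together with Markov's inequality, for every $\delta>0$ and $n\in\N$ this gives
\[
\P\Big(\sup_{t\in[n,n+T+1]}|X_t-\mathbf x_{t-n}(X_n)|>\delta\Big)\le \delta^{-p}C\eps_n^{p/2}\le \delta^{-p}C' n^{-qp/2}.
\]
Since $qp/2>1$, the right-hand side is summable in $n$. By Borel--Cantelli, almost surely
\[
\Delta_n:=\sup_{t\in[n,n+T+1]}|X_t-\mathbf x_{t-n}(X_n)|\to 0 .
\]
Choosing $\delta=1/k$ and taking a countable intersection over $k\in\N$ and over $T\in\N$, we get a single full-measure event on which $\Delta_n\to0$ for every integer $T$. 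This suffices, because the supremum in \eqref{eq:349h93hge} is monotone in $T$.

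\emph{Step 2: interpolation to real $T_0$.} Let $T_0\in[n,n+1)$ and $t\in[T_0,T_0+T]$, and let $L$ be the Lipschitz constant of $\bar f$. By the flow property, $\mathbf x_{t-n}(X_n)=\mathbf x_{t-T_0}(\mathbf x_{T_0-n}(X_n))$. Gronwall's inequality gives
\[
|\mathbf x_{t-T_0}(a)-\mathbf x_{t-T_0}(b)|\le e^{LT}|a-b| .
\]
Hence
\[
|X_t-\mathbf x_{t-T_0}(X_{T_0})|\le |X_t-\mathbf x_{t-n}(X_n)|+e^{LT}|\mathbf x_{T_0-n}(X_n)-X_{T_0}|\le (1+e^{LT})\Delta_n,
\]
since both $t$ and $T_0$ lie in $[n,n+T+1]$. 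Letting $T_0\to\infty$, so that $n\to\infty$, proves \eqref{eq:349h93hge} almost surely.

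\emph{Main obstacle.} The only delicate point is that Theorem~\ref{th:convergenceLp} controls only a countable family of starting times. The rate condition $q>2/p$ is exactly what is needed for summability along integers. Continuity in $T_0$ then does not come from any regularity of $X$ itself but from the flow estimate above. The constant $C$ depends on $T$ but not on $T_0$, and this uniformity is what makes the Borel--Cantelli sum valid.
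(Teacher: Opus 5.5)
Your proof is correct and follows the paper's argument. You apply Markov's inequality and Theorem~\ref{th:convergenceLp} along a discrete grid of starting times, sum using $qp/2>1$, apply Borel--Cantelli, and then interpolate to arbitrary $T_0$ via the flow property and the Gr\"onwall bound for the averaged flow, which gives the factor $(1+e^{L_{\bar f}T})$. The only cosmetic difference is that the paper uses the grid $NT$ with windows $[NT,(N+2)T]$, while you use integer times with windows $[n,n+T+1]$.
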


\begin{proof}
    Let $T > 0$ and $\kappa >0 $.
    By the Markov inequality and Theorem~\ref{th:convergenceLp}, there exists a constant $C \ge 0$ such that
    \begin{align*}
    &\sum_{N \in \N_0} \P\Big(\sup_{t \in [NT,(N+2)T]} |X_t- \mathbf{x}_{t-NT}(X_{NT})| > \kappa\Big) \\
    &\leq \sum_{N \in \N_0} \kappa^{-p} \E \Big[\sup_{t \in [NT,(N+2)T]} |X_t- \mathbf{x}_{t-NT}(X_{NT})|^p\Big] \\
    & \le C \kappa^{-p}  \sum_{N \in \N_0} \eps_{NT}^{p/2} <\infty.
    \end{align*}
    Thus, we get by the Borel-Cantelli lemma that
\begin{align*}
    \P\left( \limsup_{N \to \infty}\sup_{t \in [NT,(N+2)T]} |X_t- \mathbf{x}_{t-NT}(X_{NT})| > \kappa \right) = 0.
\end{align*}
Since $\kappa$ can be chosen arbitrarily small, it follows that
\begin{align} \label{eq:2349856293212}
    \limsup_{N\to\infty}\sup_{t \in [NT,(N+2)T]} |X_t- \mathbf{x}_{t-NT}(X_{NT})| =0, \quad \text{ almost surely.}
\end{align}

Now, let $T_0>0$ and set $N = \lfloor T_0/T \rfloor$. Then $[T_0,T_0+T] \subset [NT, (N+2)T]$. By the Lipschitz continuity of $\bar f$ (see Lemma~\ref{lem:barfLipschitz}), one has for every $t \in [T_0,T_0+T]$
\begin{align*}
    |X_t-\mathbf x_{t-T_0}(X_{T_0})| &\le |X_t-\mathbf x_{t-NT}(X_{NT})| + |\mathbf x_{t-T_0}(\mathbf x_{T_0-NT}(X_{NT}))-\mathbf x_{t-T_0}(X_{T_0})| \\
    & \le (1+\exp(L_{\bar f} T)) \sup_{s \in [NT,(N+2)T]} |X_s- \mathbf{x}_{s-NT}(X_{NT})|,
\end{align*}
so that there exists an almost sure event $\Omega_T$ such that \eqref{eq:349h93hge} holds. Hence, $(X_t(\omega))_{t \ge 0}$ is an asymptotic pseudo-trajectory of the averaged ODE \eqref{eq:averaged0} for all $\omega$ in the almost sure event $\bigcap_{T \in \N} \Omega_T$.
\end{proof}

Let us mention a few immediate implications of Theorem~\ref{th:convergenceLp} and Corollary~\ref{cor:APT}.
First, we show that, if $(X_t)_{t \ge 0}$ converges, its limit point is a zero of the averaged vector field $\bar f$. 

\begin{corollary}
\label{cor:criticalpoint}
 Let $x \in \R^{d_1}$ and $y \in \R^{d_2}$. Assume that Assumptions~\ref{assu:regularity} and~\ref{assu:epsilon} hold and let $(X_t,Y_t)_{t\ge 0}$ be the unique solution of \eqref{eq:fastslow0} with initial condition $(X_0,Y_0)=(x,y)$. Assume that the fast variable is $2$-dissipative.
Let $X_\infty $ be an $ \R^{d_1} $-valued random variable 
and let $\Omega_0$ be an event with $\lim_{t\to\infty} X_t=X_\infty$, in probability, on $\Omega_0$, i.e., 
assume for all $ \delta \in (0,\infty) $ that 
$$
\textstyle 
  \lim_{t\to\infty} \P\bigl(\{|X_t-X_\infty|\ge \delta\}\cap \Omega_0\bigr)=0.
$$
Then 
$$
  \P\bigl(\{\bar f(X_\infty)\not=0\}\cap\Omega_0\bigr) = 0 .
$$
\end{corollary}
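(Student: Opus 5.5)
We argue by contradiction and use Theorem~\ref{th:convergenceLp} with $p=2$, which only needs $2$-dissipativity and no decay rate for $(\eps_t)$. Suppose $\P(\{\bar f(X_\infty)\neq 0\}\cap\Omega_0)>0$. Then there exist $\eta>0$ and a compact set $K\subset\R^{d_1}$ such that the event
\[
A=\{|\bar f(X_\infty)|\ge 2\eta\}\cap\{X_\infty\in K\}\cap\Omega_0
\]
has $\P(A)=:a>0$.

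The plan is to exploit the fact that on $A$, $X$ is eventually close to $X_\infty$, whereas the averaged ODE started at $X_{T_0}\approx X_\infty$ must move a fixed positive distance in a short time $T$.

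\textbf{Step 1 (deterministic ODE displacement).} By Lemma~\ref{lem:barfLipschitz}, $\bar f$ is Lipschitz with constant $L_{\bar f}$; in particular it is bounded on bounded sets. Fix $T>0$ small and $\delta>0$ small, to be chosen. For $\tilde x$ with $|\tilde x-x_\infty|\le\delta$, $x_\infty\in K$ and $|\bar f(x_\infty)|\ge2\eta$, we write $\mathbf x_T(\tilde x)-\tilde x=\int_0^T\bar f(\mathbf x_s(\tilde x))\,ds$. Moreover $|\mathbf x_s(\tilde x)-x_\infty|\le \delta+sM$, where $M$ bounds $|\bar f|$ on the $1$-neighbourhood of $K$. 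Lipschitz continuity then gives $|\bar f(\mathbf x_s(\tilde x))-\bar f(x_\infty)|\le L_{\bar f}(\delta+TM)$. Choosing $\delta,T$ so that $L_{\bar f}(\delta+TM)\le\eta$ yields
\[
|\mathbf x_T(\tilde x)-\tilde x|\ge \eta T .
\]

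\textbf{Step 2 (probabilistic contradiction).} Additionally require $\delta<\eta T/4$. Consider, for large $T_0$, the event
\[
B_{T_0}=A\cap\{|X_{T_0}-X_\infty|<\delta\}\cap\{|X_{T_0+T}-X_\infty|<\delta\}.
\]
Convergence in probability on $\Omega_0$ gives $\P(B_{T_0})\ge a/2$ for $T_0$ large. On $B_{T_0}$, Step 1 together with the triangle inequality gives
\[
|X_{T_0+T}-\mathbf x_T(X_{T_0})|\ge |\mathbf x_T(X_{T_0})-X_{T_0}|-|X_{T_0+T}-X_{T_0}|\ge \eta T-2\delta\ge \eta T/2 .
\]
Markov's inequality and Theorem~\ref{th:convergenceLp} with $p=2$ then give
\[
a/2\le \P(B_{T_0})\le (\eta T/2)^{-2}\,C\eps_{T_0}.
\]
The right-hand side tends to $0$ as $T_0\to\infty$ by Assumption~\ref{assu:epsilon}, which is the desired contradiction.

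\textbf{Main obstacle.} The only delicate point is uniformity. The constants $\delta$ and $T$ must be independent of $\omega$, and this is why we localize $X_\infty$ to a compact set $K$ with a uniform lower bound $\eta$ on $|\bar f(X_\infty)|$. With that localization, global Lipschitz continuity of $\bar f$ makes Step 1 uniform. Note also that no asymptotic pseudo-trajectory property (Corollary~\ref{cor:APT}) is needed: the $L^2$ estimate at a single time $T_0+T$ suffices.
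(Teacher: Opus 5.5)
Your proof is correct, but it is organized differently from the paper's.

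The paper argues directly and qualitatively. Theorem~\ref{th:convergenceLp} gives $|X_{T_0+T}-\mathbf x_T(X_{T_0})|\to 0$ in probability. Since $\mathbf x_T$ is continuous (indeed Lipschitz by Lemma~\ref{lem:barfLipschitz}), also $\mathbf x_T(X_{T_0})\to\mathbf x_T(X_\infty)$ in probability on $\Omega_0$. Together with $X_{T_0+T}\to X_\infty$ this yields $\mathbf x_T(X_\infty)=X_\infty$ almost surely on $\Omega_0$ for each fixed $T$. This extends to all rational $T$ simultaneously, and then by continuity in $T$ to all $T>0$. Hence $X_\infty$ is an equilibrium and $\bar f(X_\infty)=0$.

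You instead argue by contradiction with a quantitative short-time displacement estimate. You localize to an event where $X_\infty\in K$ and $|\bar f(X_\infty)|\ge 2\eta$. There a single small $T$ forces every nearby starting point to be moved by at least $\eta T$, which contradicts the $L^2$ bound via Markov's inequality.

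Your route avoids two steps of the paper: the countable union over rational $T$, and the passage from ``fixed point of every time-$T$ map'' to ``zero of $\bar f$''. It also produces the explicit bound $\P(B_{T_0})\le 4C\eps_{T_0}/(\eta T)^2$. The price is the localization (compact $K$, uniform $\eta$) and the explicit ODE estimate. In the paper, no localization is needed, because pushing convergence in probability through $\mathbf x_T$ is immediate from its Lipschitz continuity.

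One small point to make explicit in Step~1: the bound $|\mathbf x_s(\tilde x)-x_\infty|\le \delta+sM$ requires the trajectory to stay in the $1$-neighbourhood of $K$. So choose $T$ and then $\delta$ such that $\delta+TM<1$, and a first-exit argument closes the step. Alternatively, apply Grönwall to $|\bar f(z)|\le|\bar f(x_\infty)|+L_{\bar f}|z-x_\infty|$, which avoids the neighbourhood altogether.
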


\begin{proof}
	By Theorem~\ref{th:convergenceLp}, one has for all $T>0$ that
	\begin{align*}
	\lim_{T_0 \to \infty} \sup_{t \in [T_0,T_0+T]}|X_t -\mathbf{x}_{t-T_0}(X_{T_0})| =0,\text{ \ in probability}.
	\end{align*}
	Moreover, for all $T >0$ the mapping $x \mapsto \mathbf x_T(x)$ is continuous, since $\bar f$ is Lipschitz; see Lemma~\ref{lem:barfLipschitz}. Thus,
	$$
	\P(\{\mathbf x_T (X_\infty)\not= X_\infty \}\cap \Omega_0)=0.
	$$
	Since $\mathbb Q\cap(0,\infty)$ is countable, the statement is also true when taking the union of all these events over $ T \in \mathbb Q\cap(0,\infty)$. By continuity of $\mathbf x_{ ( \cdot ) }(x)$ for every fixed $x \in \R^{d_1}$, we can conclude that
	$$
	\P\Bigl(
	  \Bigl( 
  	  \bigcup_{T>0} \{\mathbf x_T(X_\infty)\not= X_\infty\}
  	  \Bigr) \cap \Omega_0\Bigr)=0.
	$$
	This shows that, on the event $\Omega_0$, when started in~$X_\infty$ the flow is almost surely constant. Thus, $\bar f(X_\infty)=0$ almost surely on $\Omega_0$.
\end{proof}

Next, we establish convergence to an asymptotically stable point of the averaged vector field, 
assuming that  $(X_t)_{t \ge 0}$ enters its basin of attraction at late times.

\begin{corollary}
    Let $p \geq 2$, $x \in \R^{d_1}$ and $y \in \R^{d_2}$. Assume that Assumptions~\ref{assu:regularity} and~\ref{assu:epsilon} hold and let $(X_t,Y_t)_{t\ge 0}$ be the unique solution of \eqref{eq:fastslow0} with initial condition $(X_0,Y_0)=(x,y)$. Assume that the fast variable is $p$-dissipative and $\eps_t = O(t^{-q})$ for a $q \in (2/p, \infty)$.
	Let $\mathcal D \subset \R^{d_1}$ be a closed set, let $x^* \in \operatorname{int}(\mathcal D)$ satisfy 
	$$
	\lim_{t \to \infty} \mathbf x_t(\theta)=x^*, \quad \text{ uniformly in } \theta \in \mathcal D.
	$$
	Then, one has almost surely on the event
	$
	\bigcap_{T \ge 0} \bigcup_{t \ge T}\{X_t \in \mathcal D \} 
	$
	that
	$$
	\lim_{t \to \infty} X_t =x^*.
	$$
\end{corollary}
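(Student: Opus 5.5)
This corollary should follow from Corollary~\ref{cor:APT} by a deterministic argument about asymptotic pseudo-trajectories. By Corollary~\ref{cor:APT}, there is an almost sure event $\Omega^*$ on which $(X_t(\omega))_{t\ge0}$ is an asymptotic pseudo-trajectory of \eqref{eq:averaged0}. We fix $\omega \in \Omega^*$ such that $X_t(\omega)\in\mathcal D$ for arbitrarily large $t$. From here on the argument is pathwise: we write $y_t = X_t(\omega)$ and show $y_t\to x^*$.

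Since $x^*\in\operatorname{int}(\mathcal D)$, we pick $r>0$ with $\bar B(x^*,2r)\subset\mathcal D$. Fix $\delta\in(0,r)$. By the uniform convergence of the flow on $\mathcal D$, there is $T>0$ with $|\mathbf x_t(\theta)-x^*|\le\delta/2$ for all $t\ge T$ and all $\theta\in\mathcal D$. The pseudo-trajectory property, applied with horizon $2T$, gives a time $S$ such that $\sup_{t\in[T_0,T_0+2T]}|y_t-\mathbf x_{t-T_0}(y_{T_0})|\le\delta/2$ for all $T_0\ge S$.

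The core step is to show that once the path is in $\mathcal D$ after time $S$, it reaches $\bar B(x^*,\delta)$ and then stays there, by iterating windows of length $T$. First, choose $t_0\ge S$ with $y_{t_0}\in\mathcal D$. Applying the two estimates above with $T_0=t_0$, we get $|y_{t}-x^*|\le\delta$ for $t\in[t_0+T,t_0+2T]$. In particular $y_{t_0+T}\in \bar B(x^*,\delta)\subset\mathcal D$. We can therefore restart with $T_0=t_0+T$, which gives $|y_t-x^*|\le\delta$ on $[t_0+2T,t_0+3T]$. Inductively, every point $t_0+kT$ with $k\ge1$ lies in $\mathcal D$, and each restart controls the window $[t_0+(k+1)T,t_0+(k+2)T]$. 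The union of these windows covers $[t_0+T,\infty)$, so $|y_t-x^*|\le\delta$ for all $t\ge t_0+T$.

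Since $\delta$ was arbitrary, this gives $\limsup_{t \to \infty}|y_t-x^*|\le\delta$ for every $\delta$, hence $y_t\to x^*$. The main subtlety is arranging the windows so that every restart point is guaranteed to lie in $\mathcal D$. Using horizon $2T$ and restart points spaced $T$ apart handles this, because the estimate at each restart point itself comes from the previous window. No measurability issue arises, since the argument holds for each fixed $\omega$ in the almost sure event $\Omega^*$ intersected with the given event.
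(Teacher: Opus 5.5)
Your proof is correct and follows essentially the same argument as the paper. Both use Corollary~\ref{cor:APT} with horizon $2T$, where $T$ comes from the uniform convergence of the flow on $\mathcal D$, and then propagate membership in $\overline{B_\delta(x^*)}\subset\mathcal D$ forward in steps of length $T$. Your discrete chain of restart points $t_0+kT$ is a slightly cleaner way of writing the paper's induction, which looks back from each $s>t+2T$ to $s-T$, and your pathwise limit $\delta\downarrow 0$ replaces the paper's iteration over $\delta 2^{-n}$.
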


\begin{proof}
	Let $\delta\in(0,\infty)$ be such that $\overline {B_{2\delta}(x^\ast)}\subset \cD$. The statement follows once we showed that almost surely, on the event
	$\bigcap_{T \ge 0} \bigcup_{t \ge T}\{X_t \in \mathcal D \}$ one has that $(X_t)_{t \ge 0}$ takes values in $\overline{B_{\delta}(x^\ast)}$ from a random time $\tau$ onward. Applying this argument iteratively for $\delta$ replaced by $(\delta 2^{-n})_{n \in \N}$ then proves convergence.
	
	For this, choose $T>0$ such that for all $\theta\in\cD$ and $t\in[T,\infty)$ one has $\mathbf x_t(\theta)\in B_{\delta/2}(x^\ast)$. By Corollary~\ref{cor:APT}, for almost all $\omega \in \Omega$ there exists $\tau(\omega) \in [0,\infty)$ such that for all $t \ge \tau(\omega)$
	$$
	\sup_{s \in [t,t+2T]}|X_s(\omega) -\mathbf{x}_{s-t}(X_{t}(\omega))|\le \frac{\delta}{2}.
	$$
	Now, whenever $X_t(\omega)\in\cD$ for a $t \ge \tau(\omega)$, one has
    $X_s(\omega) \in \overline{B_{\delta}(x^\ast)}$ for all $s \ge t + T$. In fact, if $s \in [t+T, t+2T]$ then
    $$
        |X_s(\omega)-x^*|\le |X_s(\omega) - \mathbf x_{s-t}(X_t(\omega))| + |\mathbf x_{s-t}(X_t(\omega))-x^*| \le  \delta
    $$
    and if $s > t +2T$ one gets by induction that $X_{s-T}(\omega) \in \overline {B_{\delta}(x^*)}$ and, thus,
    $$
        |X_s(\omega) - x^*| \le |X_s(\omega)- \mathbf x_{T}(X_{s-T})| + |\mathbf x_{T}(X_{s-T}(\omega))-x^*| \le  \delta.
    $$
\end{proof}

\section{Applications} 
\label{sec:applications}

In this section, we give two important applications for our main results: the theoretical analysis of optimization methods with multiple hyperparameters and energy-based models.

\subsection*{Heavy ball method}
A line of research studies the optimal hyperparameter selection for optimization algorithms via analyzing asymptotic regimes, where one set of hyperparameters is chosen much smaller compared to others.

We aim to minimize a differentiable loss function $L:\R^{d} \to \R$ using the heavy ball method given by
\begin{align*}
\begin{cases}
dX_t = Y_t dt, \\ 
dY_t =  -\frac{1}{\varepsilon_t}(\nabla L(X_t)+\beta Y_t)dt + \frac{1}{\sqrt{\varepsilon_t}}\sigma(X_t)  dW_t, 
\end{cases}
\end{align*}
where $\beta >0$, $\sigma: \R^{d} \to \R^{d \times d}$  and $(W_t)_{t \ge 0}$ denotes a $d$-dimensional Brownian motion.
For fixed $x \in \R^{d}$, the fast equation is an Ornstein-Uhlenbeck process with invariant distribution
\[
\mu_x
=
\mathcal N \big(-\beta^{-1}\nabla L(x),\,(2\beta)^{-1}\sigma(x)\sigma(x)^\top\big).
\]
Therefore, the averaged ODE is precisely the gradient flow
\[
\dot{\mathbf x}_t
=
\int y\,\mu_{\mathbf x_t}(dy)
=
-\frac{1}{\beta} \nabla L(\mathbf x_t).
\]

\subsection*{Training an energy-based model}
Given a parametrized family of probability distributions \((\mu_x)_{x \in \R^{d_1}}\) and a data distribution $\nu$ on $\R^{d_2}$, we aim to find a parameter $x$ such that $\mu_x$ is a good approximation for the data distribution.
We consider an energy-based model, i.e.,
\[
\mu_x(dz)
=
\frac{1}{Z(x)}
\exp(-E_x(z))\, dz,
\]
where $E_x:\mathbb R^{d_2}\to\mathbb R$ is called the energy function and  
\[
Z(x)
=
\int
\exp(-E_x(z))\, dz
\]
denotes a finite normalizing constant.

A natural objective for finding a good model parameter $x$ is to maximize the log-likelihood function $\ell(x;z)= - E_x(z) - \log Z(x)$ averaged over the data distribution, i.e.
\[
\ell(x)
=
 - \int E_x(z) \, \nu(dz) - \log Z(x).
\]
Differentiating the log-likelihood function gives (under suitable assumptions)
\[
\nabla \ell(x)
=
- \int \nabla_x E_x(z) \, \nu(dz) - \nabla \log Z(x)
\]
with 
$$
  \nabla \log Z(x) = -\int  \nabla_x E_x(z) \, \mu_x(dz).
$$

Since $Z(x)$ and, hence $\mu_x$, are often intractable, it is difficult to compute $\nabla \log Z(x)$. 
Instead, $\mu_x$ is often approximated using a Markov process. For fixed
$x$, consider the overdamped Langevin diffusion
$$
dY_s^x
=
-\nabla_y E_x(Y_s^x)\,ds
+
\sqrt{2}\,dW_s,
$$
where $(W_t)_{t \ge 0}$ denotes a $d_2$-dimensional Brownian motion.
If there exists a $\beta >0$ such that for all $x \in \R^{d_1}$ and $y_1, y_2 \in \R^{d_2}$
$$
    \langle y_1-y_2 , \nabla_y E_x(y_1)- \nabla_y E_x(y_2) \rangle \ge \beta |y_1-y_2|^2
$$
the Langevin diffusion is $p$-dissipative for all $p \ge 2$ and the unique invariant distribution is given by $\mu_x$. Hence, one can decrease the approximation error for $\mu_x$ by running the corresponding Langevin diffusion for a longer time. This naturally leads to the fast-slow system
\[
\begin{aligned}
dX_t
&=
\Big(\nabla_x E_{X_t}(Y_t)- \int \nabla_x E_{X_t}(z) \, \nu(dz)  \Big) \, dt,
\\
dY_t
&=
-\frac{1}{\varepsilon_t}
\nabla_y E_{X_t}(Y_t)\,dt
+
\sqrt{\frac{2}{\varepsilon_t}}\,dW_t,
\end{aligned}
\]
where the fast variable can be viewed as the sampling process and the slow variable is the parameter one aims to optimize. 
Hence, under additional regularity assumptions (see Assumption~\ref{assu:regularity}) and for decaying $\eps_t= \cO(t^{-q})$ for a $q>0$, Corollary~\ref{cor:APT} gives that the parameter process $(X_t)_{t \ge 0}$ is almost surely an asymptotic pseudo-trajectory of the gradient flow
\[
\dot{\bar X}_t = 
\nabla \ell(\bar X_t).
\]
See \cite{akyildiz2024multiscale} for a uniform-in-time 
averaging principle for energy-based models under the additional assumption that $-\ell$ is strongly convex.

\section{Proof of the main result}
We will prove the main result, Theorem~\ref{th:convergenceLp}, using the Poisson equation.

\begin{proof}[Proof of Theorem~\ref{th:convergenceLp}]
We note that for all $\tau \in [0,T]$
\begin{align*}
    &\sup_{t \in [T_0,T_0+\tau]} |X_t-\mathbf{x}_{t-T_0}(X_{T_0})| 
    \leq \sup_{t \in [T_0,T_0+\tau]} 
    \left|\int_{T_0}^t 
    (f(X_s,Y_s)-\bar{f}(\mathbf{x}_{s-T_0}(X_{T_0}))) \, ds\right| \\
    &\leq \sup_{t \in [T_0,T_0+\tau]} 
    \left|\int_{T_0}^t 
    (f(X_s,Y_s)-\bar{f}(X_s))ds\right|
    +  \int_{T_0}^{T_0+\tau} 
    |\bar{f}(X_s)-\bar{f}(\mathbf{x}_{s-T_0}(X_{T_0}))|\, ds \\
    &\leq \sup_{t \in [T_0,T_0+\tau]} 
    \left|\int_{T_0}^t 
    (f(X_s,Y_s)-\bar{f}(X_s))ds\right|  + L_{\bar{f}} \int_{0}^{\tau} 
    \sup_{r \in [T_0,T_0+s]} 
    |X_r-\mathbf{x}_{r-T_0}(X_{T_0})| \, ds,
\end{align*}
where $L_{\bar f}\ge 0$ denotes the Lipschitz constant of $\bar f$; see Lemma~\ref{lem:barfLipschitz}.
By Grönwall's lemma this implies
\begin{align} \label{eq:Gronwall1}
    \bigg\|\sup_{t \in [T_0,T_0+T]} 
    |X_t-\mathbf{x}_{t-T_0}(X_{T_0})| \bigg\|_{L^p}
    \leq e^{L_{\bar{f}}T} \, 
    \bigg\| \sup_{t \in [T_0,T_0+T]} 
    \left|\int_{T_0}^t 
    (f(X_s,Y_s)-\bar{f}(X_s))ds\right| 
    \bigg\|_{L^p}
\end{align}
so that it suffices to bound the term on the right-hand side of \eqref{eq:Gronwall1}.

First note that, for a function $\varphi \colon \mathbb R^{d_1+d_2}\to \mathbb R$ that is once continuously differentiable in $x$ and twice continuously differentiable in $y$, Itô's formula gives
\begin{align}
    \begin{split} \label{eq:Itôgeneral}
    d \varphi (X_t,Y_t)
    =&  D_x \varphi (X_t,Y_t) \, f(X_t,Y_t)  \,  dt \\
    &+ \frac{1}{\eps_t} 
    \Big(
     D_y \varphi (X_t,Y_t) \, g(X_t,Y_t) 
    + \frac{1}{2} 
    \mathrm{tr} \Big(
    \sigma(X_t,Y_t)\sigma(X_t,Y_t)^\top D^2_y \varphi (X_t,Y_t)
    \Big)
    \Big) dt \\
    &+ \frac{1}{\sqrt{\eps_t}} 
     D_y \varphi (X_t,Y_t) \, \sigma(X_t,Y_t) \, d W_t \\
    =& \mathcal{L}^0 \varphi (X_t,Y_t) \,  dt 
    + \frac{1}{\eps_t}\mathcal{L}^1 \varphi (X_t,Y_t) \, dt 
    + \frac{1}{\sqrt{\eps_t}} 
     D_y \varphi (X_t,Y_t) \, \sigma(X_t,Y_t) \, d W_t  ,
    \end{split}
\end{align}
where $\mathcal{L}^0$ and $\mathcal{L}^1$ are defined by
\begin{align*}
    \mathcal{L}^0 \varphi (x,y) 
    &=  D_x \varphi(x,y) \,  f(x,y) , \\
    \mathcal{L}^1 \varphi(x,y) 
    &= D_y \varphi(x,y) \, g(x,y)  
    + \frac{1}{2}
    \mathrm{tr}\left(\sigma(x,y)\sigma(x,y)^\top D_y^2 \varphi(x,y)\right).
\end{align*}
In particular, for fixed $x \in \mathbb R^{d_1}$, the map 
$y \mapsto \mathcal{L}^1 \varphi(x,y)$ is the generator of the Markov process 
$\hat{Y}^x$ obtained by freezing the slow variable, namely the solution to the SDE
\begin{align*}
    d\hat{Y}_t^x
    = g(x,\hat{Y}_t^x)dt
    + \sigma(x,\hat{Y}_t^x)d \hat{W}_t .
\end{align*}
We denote its corresponding semigroup by $P^x_t$, $t \geq 0$, and consider the compensator
\begin{align*}
    \Phi \colon \mathbb R^{d_1+d_2} \to \mathbb R^{d_1}\quad ,\quad
    \Phi(x,y) 
    = \int_0^\infty 
    \bar{f}(x)-P_t^x f(x,y) \,  dt.
\end{align*}
Now, Lemma~\ref{lem:barfLipschitz} shows that $\Phi$ is well-defined and Lemma~\ref{lem:Poisson} shows that, for each fixed $x$, $\Phi(x,\cdot)$ solves the Poisson equation
\begin{align*}
    \mathcal{L}^1\Phi(x,\cdot) = f(x,\cdot)-\bar{f}(x).
\end{align*}

Moreover, using Assumption~\ref{assu:regularity} and the $2$-dissipativity of the fast variable, Lemmas~\ref{___lem_DyPhi},~\ref{___lem_D2yPhi}, and~\ref{lem:DxPhi} show that $\Phi \in C^{1,2}$.
Thus, we can apply \eqref{eq:Itôgeneral} componentwise to $\Phi$ and get
\begin{align*}
    d \Phi(X_t,Y_t) 
    = \mathcal{L}^0\Phi(X_t,Y_t)dt
    +\frac{1}{\eps_t} \mathcal{L}^1\Phi(X_t,Y_t)dt 
    + \frac{1}{\sqrt{\eps_t}} 
     D_y\Phi(X_t,Y_t)\, \sigma(X_t,Y_t)dW_t.
\end{align*}
By the Poisson equation,
\begin{align*}
    &(f(X_t,Y_t)-\bar{f}(X_t))\, dt
    = \mathcal{L}^1\Phi(X_t,Y_t)\, dt \\
    &= \eps_t\, d\Phi(X_t,Y_t)
    -\eps_t\mathcal{L}^0\Phi(X_t,Y_t)\, dt
    -\sqrt{\eps_t}  D_y \Phi(X_t,Y_t)\, \sigma(X_t,Y_t)\, dW_t \\
    &= \eps_t\, d\Phi(X_t,Y_t)
    -\eps_t  D_x\Phi(X_t,Y_t)\, f(X_t,Y_t) \, dt
    -\sqrt{\eps_t} D_y \Phi(X_t,Y_t)\, \sigma(X_t,Y_t) \, dW_t  .
\end{align*}
Thus, we can decompose the term on the right-hand side of \eqref{eq:Gronwall1} into
\begin{align*}
    \bigg\| \sup_{t \in [T_0,T_0+T]} 
    \bigg|\int_{T_0}^t 
    (f(X_s,Y_s)-\bar{f}(X_s))&ds\bigg| 
    \bigg\|_{L^p} \leq 
    \bigg\| \sup_{t \in [T_0,T_0+T]} 
    \left|\int_{T_0}^t\eps_s\, d\Phi(X_s,Y_s)\right| 
    \bigg\|_{L^p} \\
    &\quad+
    \bigg\| \sup_{t \in [T_0,T_0+T]} 
    \left|\int_{T_0}^t
    \eps_s D_x\Phi(X_s,Y_s)\, f(X_s,Y_s)  ds\right| 
    \bigg\|_{L^p} \\
    &\quad+
    \bigg\| \sup_{t \in [T_0,T_0+T]} 
    \left|\int_{T_0}^t
    \sqrt{\eps_s}  D_y\Phi(X_s,Y_s)
    \,\sigma(X_s,Y_s) d W_s  \right| 
    \bigg\|_{L^p} .
\end{align*}

We individually bound each of the three terms on the right-hand side of the latter inequality.

\underline{Third term:}
By Assumption~\ref{assu:regularity}, there exists a $C_1 > 0$ with
\begin{align*}
    |\sigma(x,y)| \leq C_1 (1+|y|^p)^{1/p}.
\end{align*}
By the BDG inequality and Jensen's inequality, there exists $C_2 >0$ such that
\begin{align} \begin{split}
    \label{eq:w9regh34983333}
    &\bigg\| \sup_{t \in [T_0,T_0+T]} 
    \bigg|\int_{T_0}^t
    \sqrt{\eps_s}  D_y\Phi(X_s,Y_s)\, 
    \sigma(X_s,Y_s)d W_s  \bigg| 
    \bigg\|_{L^p} \\
    &\leq  C_2 
    {\E} 
    \bigg[\bigg(\int_{T_0}^{T_0+T} 
    \eps_s 
    | D_y\Phi(X_s,Y_s) \, \sigma(X_s,Y_s)|_F^2 \,  
    ds\bigg)^{p/2} \bigg]^{1/p} \\
    &\leq  C_2 \bigg(T^{(p-2)/2}  \int_{T_0}^{T_0+T} \eps_s^{p/2}
    {\E} [|D_y\Phi(X_s,Y_s) \, \sigma(X_s,Y_s)|_F^p] 
    \, ds\bigg)^{1/p} \\
    &\leq C_2 C_1 \sqrt{T}
    \sup_{x,y} |D_y \Phi(x,y)|_F  \Big(1+\sup_{t \geq 0}{\E}[|Y_t|^p]\Big)^{1/p}
    \sqrt{\eps_{T_0}} \\
    &=\mathcal{O}(\sqrt{\eps_{T_0}}),
    \end{split}
\end{align}
where we have used that $\sup_{t \ge 0}{\E}[|Y_t|^p]<\infty$ due to Lemma~\ref{lem:momentfast} and $\sup_{x,y} |D_y \Phi(x,y)|_F <\infty$ due to Lemma~\ref{___lem_DyPhi}.

\underline{Second term:}
Choose $\alpha \in (0,1)$ sufficiently small such that the fast variable is $p+p\alpha$-dissipative with constant $\beta/2$; see Remark~\ref{rem:dissibigger}. Then, by Lemma~\ref{lem:DxPhi}, there exists a constant $C_3\ge 0$ with 
\begin{align*}
    |D_x \Phi(x,y)| \leq C_3 (1+|y|^\alpha),
\end{align*}
so that
\begin{align*}
    |D_x \Phi(x,y) f(x,y)| \leq C_3 (1+|y|^{\alpha})(|f(x,0)| + L_f|y|) \leq C_4(1+|y|^{1+\alpha}),
\end{align*}
where $L_f \ge 0$ denotes the Lipschitz constant of $f$ and $C_4 := 2C_3 \max(L_f, |f(\cdot, 0)|_\infty)$.
We conclude, using Lemma~\ref{lem:momentfast}, that
\begin{align*}
    &\bigg\| \sup_{t \in [T_0,T_0+T]} 
    \left|\int_{T_0}^t
    \eps_s  D_x\Phi(X_s,Y_s)\, f(X_s,Y_s)  ds\right| 
    \bigg\|_{L^p} \\
    &\leq 
    \int_{T_0}^{T_0+T} 
    \eps_s \| D_x\Phi(X_s,Y_s)\, f(X_s,Y_s) \|_{L^p}ds \\
    &\leq 
    C_4T\eps_{T_0}
    (1+\sup_{t \geq 0}\|Y_t\|_{L^{p+p\alpha }}^{1+\alpha }) \\
    &=\mathcal{O}(\eps_{T_0}).
\end{align*}

\underline{First term:}
As a consequence of Itô's product rule and the finite variation of $(\eps_t)_{t \ge 0}$, we have
\begin{align*}
    \eps_t\, d\Phi(X_t,Y_t) 
    = d(\eps_t\Phi(X_t,Y_t))-\Phi(X_t,Y_t) \, d\eps_t .
\end{align*}
Therefore,
\begin{align}
\begin{split}
    \label{eq:ej9gh349h4}
    &\bigg\| \sup_{t \in [T_0,T_0+T]} 
    \bigg|\int_{T_0}^t\eps_s\, d\Phi(X_s,Y_s)\bigg| 
    \bigg\|_{L^p}\\
    &\leq 2 \bigg\| \sup_{t \in [T_0,T_0+T]} 
    \bigg| \eps_t \Phi(X_t,Y_t)\bigg| 
    \bigg\|_{L^p} 
    %+ \eps_{T_0}
    %\bigg\| \Phi(X_{T_0},Y_{T_0})\bigg\|_{L^p} +
    + \bigg\| \int_{T_0}^{T_0+T} 
    |\Phi(X_s,Y_s)|d(-\eps_s) 
    \bigg\|_{L^p} .
    \end{split}
\end{align}
By Lemma~\ref{lem:barfLipschitz}, $\Phi$ satisfies uniform linear growth in $y$, i.e. there exists a constant $C_5$ such that
\begin{align} \label{eq:Phi348z39gz93}
    |\Phi(x,y)| \leq C_5 (1+|y|), \quad \text{ for all }
     x \in \mathbb R^{d_1},\ y\in\mathbb R^{d_2},
\end{align}
and, by Lemma~\ref{lem:momentfast}, there exists a constant $C_6\ge 0$ such that
\begin{align*}
    \| Y_t \|_{L^p} \leq C_6, 
    \quad \text{ for all } t\geq 0.
\end{align*}

Now, for the last term on the right-hand side of \eqref{eq:ej9gh349h4}, Hölder's inequality gives
\begin{align*}
    &\left\| \int_{T_0}^{T_0+T} 
    |\Phi(X_s,Y_s)|d(-\eps_s) 
    \right\|_{L^p} 
    \\
    &\leq 
    \bigg(
    \mathbb E \left[
    \left( \eps_{T_0}-\eps_{T_0+T} \right)^{p-1}
    \int_{T_0}^{T_0+T} 
    |\Phi(X_s,Y_s)|^p d(-\eps_s) 
    \right]\bigg)^{1/p} \\
    &\leq 
    \Big(
    \left( \eps_{T_0}-\eps_{T_0+T} \right)^p
    \sup_{t \in[T_0,T_0+T]}
    \mathbb E[|\Phi(X_t,Y_t)|^p]
    \Big)^{1/p} \\
    &\leq 
    \eps_{T_0} C_5
    \sup_{t \in[T_0,T_0+T]} 
    \left(1+\| Y_t \|_{L^p}\right) \\
    &\leq 
    \eps_{T_0}C_5
    (1+C_6)
    =\mathcal{O}(\eps_{T_0}).
\end{align*}

For the first term on the right-hand side of \eqref{eq:ej9gh349h4}, note that by \eqref{eq:Phi348z39gz93}
\begin{align*}
    \sup_{t\in[T_0,T_0+T]}\eps_t|\Phi(X_t,Y_t)|
    \leq C_5 \eps_{T_0}
    + C_5 
    \sup_{t\in[T_0,T_0+T]}\eps_t |Y_t|.
\end{align*}
Thus, it suffices to control 
$\|\sup_{t\in[T_0,T_0+T]}\eps_t |Y_t| \, \|_{L^p}$.
We define $(Z_t)_{t \ge T_0} := (\eps_t Y_t)_{t \ge T_0}$. Then,
\begin{align*}
    d Z_t 
    = Y_t d\eps_t +\eps_tdY_t 
    = Y_t d \eps_t 
    + g(X_t,Y_t) dt 
    + \sqrt{\eps_t}\sigma(X_t,Y_t)dW_t,
    \quad t\geq T_0.
\end{align*}
For $p\geq 2$, Itô's formula gives
\begin{align*}
    &d|Z_t|^p
    = 
    p |Z_t|^{p-2} \langle Z_t,Y_t \rangle d \eps_t 
    + p |Z_t|^{p-2} 
    \langle Z_t , g(X_t,Y_t) \rangle dt + p\sqrt{\eps_t}|Z_t|^{p-2} 
    \langle Z_t,\sigma(X_t,Y_t)dW_t \rangle \\
    &\qquad \qquad + \frac{p}{2} |Z_t|^{p-2} \eps_t 
    \mathrm{tr}\big(
    \sigma(X_t,Y_t)\sigma(X_t,Y_t)^\top
    \big) dt + \frac{p(p-2)}{2} |Z_t|^{p-4} \eps_t 
    |\sigma(X_t,Y_t)^\top Z_t|^2 dt \\
    &=
    -p \eps_t^{p-1} |Y_t|^{p} d (-\eps_t) 
    + p \eps_t^{p-1} |Y_t|^{p-2} 
    \langle Y_t , g(X_t,Y_t) \rangle dt + \frac{p}{2} \eps_t^{p-1} |Y_t|^{p-2}
    \mathrm{tr}\big(
    \sigma(X_t,Y_t)\sigma(X_t,Y_t)^\top
    \big) dt \\
    &\qquad + \frac{p(p-2)}{2} \eps_t^{p-1} |Y_t|^{p-4}
    |\sigma(X_t,Y_t)^\top Y_t|^2 dt+ p \eps_t^{p-1/2} |Y_t|^{p-2} 
    \langle Y_t,\sigma(X_t,Y_t)dW_t \rangle
    .
\end{align*}
Note that the first summand on the right-hand side of the latter equality is non-positive.
By Assumption~\ref{assu:regularity}, $g$ and $\sigma$ have at most linear growth in $y$ whilst being uniformly bounded in $x$. Thus, there exists a constant $C_7 >0$ such that
\begin{align}
    \begin{split}
        \label{eq:wjo4hhhh}
    {\E}\Big[\sup_{t \in [T_0,T_0+T]}|\eps_t Y_t|^p\Big]
    &\leq
    \eps_{T_0}^{p} {\E}[|Y_{T_0}|^p ]
    +  {\E}\bigg[ \int_{T_0}^{T_0+T} C_7 \eps_s^{p-1} (|Y_s|^p + |Y_s|^{p-1} + |Y_s|^{p-2}) ds \bigg] \\
    &\qquad + p {\E}\bigg[\sup_{t \in [T_0,T_0+T]} \Big| \int_{T_0}^t  \eps_s^{p-1/2} |Y_s|^{p-2} 
    \langle Y_s,\sigma(X_s,Y_s)dW_s \rangle  \Big| \bigg] \\
    &\leq \eps_{T_0}^{p} {\E}[|Y_{T_0}|^p ] + C_7 T \eps_{T_0}^{p-1} \sup_{t \geq 0} \big({\E}[|Y_t|^{p}]+{\E}[|Y_t|^{p-1}]+{\E}[|Y_t|^{p-2}]\big)\\
     &\qquad + p {\E}\bigg[\sup_{t \in [T_0,T_0+T]} \Big| \int_{T_0}^t  \eps_s^{p-1/2} |Y_s|^{p-2} 
    \langle Y_s,\sigma(X_s,Y_s)dW_s \rangle \Big|\bigg].
    \end{split}
\end{align}
For the martingale part, we fix a constant $C_8 > 0$ with
\begin{align*}
    |\sigma(x,y)|^2 \leq C_8(1+|y|^2)
\end{align*}
and note that, by BDG's inequality and Young's inequality, one has
\begin{align}
\begin{split}
    \label{eq:jjjeri34s}
     &
      \E \left[ \sup_{t\in [T_0,T_0+T]}  \left| \int_{T_0}^t  \eps_s^{p-1/2} |Y_s|^{p-2} \langle Y_s,\sigma(X_s,Y_s)dW_s \rangle \right| \right] \\
     &\qquad \leq
     C_9 \E \bigg[ \bigg( \int_{T_0}^{T_0+T}  \eps_s^{2p-1} |Y_s|^{2p-2}  |\sigma(X_s,Y_s)|^2 ds  \bigg)^{1/2} \bigg] \\
     &\qquad \leq
     C_9 \E \bigg[ \bigg(\sup_{t \in [T_0,T_0+T]} |\eps_t Y_t|^p \bigg)^{1/2} \bigg( \int_{T_0}^{T_0+T}  \eps_s^{p-1} |Y_s|^{p-2}  |\sigma(X_s,Y_s)|^2 ds  \bigg)^{1/2} \bigg] \\
     &\qquad \leq
     \delta \frac{C_9}{2} {\E} \Big[ \sup_{t \in [T_0,T_0+T]} |\eps_t Y_t|^p \Big] + \frac{C_8C_9}{2\delta} \int_{T_0}^{T_0+T} \eps_s^{p-1} ({\E}[|Y_s|^{p-2}] +{\E}[|Y_s|^{p}]) ds \\
     &\qquad \leq
     \delta \frac{C_9}{2} {\E} \Big[ \sup_{t \in [T_0,T_0+T]} |\eps_t Y_t|^p \Big] + \frac{C_8C_9T}{2\delta} \eps_{T_0}^{p-1} \sup_{t \geq 0}({\E}[|Y_t|^{p-2}] +{\E}[|Y_t|^{p}])  
     \end{split}
\end{align}
for a constant $C_9 > 0$ and arbitrary $\delta > 0$.
Using the uniform Lipschitz continuity of the coefficients, Assumption~\ref{assu:regularity}, we can apply a standard result from stochastic analysis, see e.g. Theorem 1.6.16 in \cite{yong1999stochastic}, to get
\begin{align*}
\E \Big[ \sup_{t \in [T_0,T_0+T]} |\eps_t Y_t|^p \Big] \leq \eps_{T_0}^p \E \Big[ \sup_{t \in [T_0,T_0+T]} |Y_t|^p \Big] < \infty.
\end{align*} 
Therefore, we can choose $\delta \in (0,\frac{2}{p \, C_9})$ and combine \eqref{eq:wjo4hhhh} and \eqref{eq:jjjeri34s} to arrive at
\begin{align*}
    0 \leq \Big(1-p \delta\frac{C_9}{2}\Big){\E} \Big[ \sup_{t \in [T_0,T_0+T]} |\eps_t Y_t|^p \Big] \leq \eps_{T_0}^{p-1} C_{10} \, \sup_{t \geq 0}({\E}[|Y_{t}|^{p}]+{\E}[|Y_{t}|^{p-1}]+{\E}[|Y_{t}|^{p-2}]),
\end{align*}
for a constant $C_{10}> 0$ and sufficiently large $T_0$.
As the moments of $Y$ are uniformly bounded over $t$ up to order $p$, see Lemma~\ref{lem:momentfast}, we have 
\begin{align*}
    \bigg\| \sup_{t \in [T_0,T_0+T]} 
    \eps_t |Y_t| \bigg\|_{L^p} 
    = \mathcal{O}\Big(\eps_{T_0}^{\frac{p-1}{p}}\Big).
\end{align*}
Consequently,
\begin{align*}
    \bigg\| \sup_{t \in [T_0,T_0+T]} 
    \eps_t |\Phi(X_t,Y_t)| \bigg\|_{L^p}
    =
    \mathcal{O}\Big(\eps_{T_0}^{\frac{p-1}{p}}\Big),
\end{align*}
and therefore
\begin{align*}
    \bigg\| \sup_{t \in [T_0,T_0+T]} 
    \left|\int_{T_0}^t\eps_s\, d\Phi(X_s,Y_s)\right| 
    \bigg\|_{L^p}
    =
    \mathcal{O}\Big(\eps_{T_0}^{\frac{p-1}{p}}\Big).
\end{align*}
The statement follows since $\eps_{T_0}^{(p-1)/p}\le \sqrt{\eps_{T_0}}$ for all $p \ge 2$ and sufficiently large $T_0$.
\end{proof}

\begin{remark}
    An alternative approach is to partition the interval $[T_0,T_0+T]$ into small subintervals and freeze the slow variable at the beginning of each subinterval. However, estimating the global error by applying the triangle inequality to the resulting local approximation errors generally leads to a suboptimal approximation error. Indeed, if one controls the resulting martingale contributions by applying the BDG inequality separately on the individual subintervals rather than to a single martingale over the full interval, one fails to exploit their cancellations and averaging properties optimally. This is especially important since the size of the martingale is the dominating term in the bound; see \eqref{eq:w9regh34983333}. Thus, recovering the optimal rate within this framework would require controlling the accumulated martingale errors jointly over the entire interval. In our Poisson corrector approach, the martingale is not split into parts.
\end{remark}

\begin{remark}
Many results on the averaging principle in the literature provide estimates with the supremum over time outside the expectation; see, for example,
\cite{liu2010strong,BrehierCharles-Edouard2020Ooci,FuHongbo2015Scia,sun2024averaging}.
These bounds, therefore, control the marginal errors uniformly over the compact time-interval, whereas Theorem~\ref{th:convergenceLp} gives the stronger pathwise estimate bounding
\[
\E\bigg[
\sup_{t\in[T_0,T_0+T]}
|X_t-\mathbf{x}_{t-T_0}(X_{T_0})|^p
\bigg].
\]
The additional technical difficulties become apparent, e.g., in \eqref{eq:jjjeri34s}, where, instead of 
$$
 \sup_{t \in [T_0,T_0+T]}{\E} [ |\eps_t Y_t|^p ] \le 
\eps_{T_0 }^p \sup_{t \in [T_0,T_0+T]}{\E}[  | Y_t|^p ],
$$
one has to control ${\E} [ \sup_{t \in [T_0,T_0+T]} |\eps_t Y_t|^p ]$. The former can immediately be bounded using the $p$-dissipativity, see Lemma~\ref{lem:momentfast}, while the latter requires a maximal estimate for the solution of an SDE. This distinction is important because the interval $[T_0,T_0+T]$ corresponds to a fast-time interval of length $\int_{T_0}^{T_0+T} \frac{1}{\eps_s} ds$ which diverges as $T_0 \to \infty$. Consequently, one needs to bound the fluctuations of the fast process over an increasing amount of time in its corresponding timescale. 
\end{remark}

\section{Auxiliary Results}
In this section, we provide auxiliary results used in the proof of Theorem~\ref{th:convergenceLp}.
First, we discuss moment bounds and ergodicity of the frozen fast equation \eqref{eq:frozen0} under the dissipativity and regularity assumptions. Then, we provide a moment bound for the fast variable $(Y_t)_{t \ge 0}$. Next, we prove well-definedness of the Poisson equation and show that the Poisson corrector $\Phi$ solves the Poisson equation. Lastly, we provide regularity results for the Poisson equation.

\subsection{Moment bounds and ergodicity of the frozen fast equation}
We consider the frozen fast SDE on a filtered probability space $(\hat \Omega, \hat {\mathcal F}, (\hat {\mathcal F}_t)_{t \ge 0}, \hat \P)$ satisfying the usual conditions, which is obtained by freezing the slow variable, i.e. the equation
\begin{align*}
    d\hat{Y}_t^{x,y}
    = g(x,\hat{Y}_t^{x,y})dt
    + \sigma(x,\hat{Y}_t^{x,y})d \hat{W}_t
\end{align*}
with initial condition $\hat Y_0^{x,y}=y$. 

We repeat the assumption on the coefficients.
\begin{assumption} \label{assu:frozenLipschitz}
    There exists $L_g,L_\sigma, C_g, C_\sigma \ge 0$ such that for all $x_1,x_2 \in \R^{d_1}$ and $y_1,y_2 \in \R^{d_2}$
    $$
        |g(x_1,y_1)-g(x_2,y_2)| \le L_g |(x_1,y_1)-(x_2,y_2)|,
    $$
    $$
        |\sigma(x_1,y_1)-\sigma(x_2,y_2)|_F \le L_\sigma |(x_1,y_1)-(x_2,y_2)|,
    $$
    as well as
    $$
        |\sigma(x_1,0)|_F \le C_\sigma \quad \text{ and } \quad |g(x_1,0)| \le C_g.
    $$
\end{assumption}

\begin{definition} \label{def:frozendissi}
    Let $p \geq 2$ and $\beta > 0$. We say that the frozen variable $(\hat Y_t^x)_{t \ge 0}$ is (uniformly) $p$-dissipative (with constant $\beta$), if for all $x\in \R^{d_1}$ and $y_1,y_2 \in \R^{d_2}$
\begin{align*}
    \langle y_1 - y_2, g(x,y_1)-g(x,y_2) \rangle + \frac{p-1}{2} |\sigma(x,y_1)-\sigma(x,y_2)|_F^2 \leq -\beta|y_1-y_2|^2.
\end{align*}
\end{definition}

\begin{lemma} \label{___lem_Y}
    Assume that Assumption~\ref{assu:frozenLipschitz} is satisfied. Then for any fixed time horizon $T>0$ and $p \geq 2$, the process $(\hat Y^{x,y}_t)_{t \in [0,T]}$ is locally $L^p$-bounded and continuous in $(x,y) \in \R^{d_1+d_2}$ with respect to $L^p$-convergence. More precisely, for any bounded set $U \subseteq \R^{d_2}$ there exists a constant $C_{p,T,U}>0$ such that
    \begin{align*}
        \sup_{(x,y) \in \R^{d_1}\times U} \hat \E \Big[ \sup_{0 \leq t \leq T} |\hat Y^{x,y}_t|^p \Big] \leq C_{p,T,U},
    \end{align*}
    and for any $(x,y) \in \R^{d_1+d_2}$ we have
    \begin{align*}
        \lim_{(x',y')\to (x,y)} \hat \E \Big[ \sup_{0 \leq t \leq T} |\hat Y^{x',y'}_t-\hat Y^{x,y}_t|^p \Big] =0.
    \end{align*}
\end{lemma}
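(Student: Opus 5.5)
This is a standard estimate for SDEs with globally Lipschitz coefficients. I would prove both claims by Itô's formula for $|\cdot|^p$ (or, equivalently, the BDG inequality applied to the integral form), combined with Grönwall's lemma. No dissipativity is needed, and every constant depends only on $p$, $T$, $L_g$, $L_\sigma$, $C_g$, $C_\sigma$ and $U$.

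\emph{Uniform bound.} Write
\[
\hat Y^{x,y}_t = y + \int_0^t g(x,\hat Y^{x,y}_s)\,ds + \int_0^t \sigma(x,\hat Y^{x,y}_s)\,d\hat W_s .
\]
Assumption~\ref{assu:frozenLipschitz} gives the growth bounds $|g(x,y')|\le C_g+L_g|y'|$ and $|\sigma(x,y')|_F\le C_\sigma+L_\sigma|y'|$, uniformly in $x$. To avoid assuming finiteness a priori, I would localize with $\tau_n=\inf\{t:|\hat Y_t^{x,y}|\ge n\}$ and set $\phi_n(t)=\hat\E[\sup_{s\le t\wedge\tau_n}|\hat Y_s^{x,y}|^p]$. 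Applying $(a+b+c)^p\le 3^{p-1}(a^p+b^p+c^p)$, Hölder's inequality to the drift and BDG followed by Hölder to the stochastic integral gives
\[
\phi_n(t)\le C\Big(|y|^p + 1 + \int_0^t \phi_n(s)\,ds\Big),
\]
with $C=C(p,T,L_g,L_\sigma,C_g,C_\sigma)$ independent of $x$ and $n$. Grönwall then yields $\phi_n(T)\le C(1+|y|^p)e^{CT}$. Letting $n\to\infty$ via Fatou's lemma, and taking the supremum over $y\in U$ (which is bounded) and over all $x$, gives the first claim.

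\emph{Continuity.} Set $\Delta_t=\hat Y^{x',y'}_t-\hat Y^{x,y}_t$. By the joint Lipschitz property,
\[
|g(x',\hat Y^{x',y'}_s)-g(x,\hat Y^{x,y}_s)|\le L_g\big(|x'-x|+|\Delta_s|\big),
\]
and likewise for $\sigma$ with $L_\sigma$. The same BDG/Hölder computation gives, for $\psi(t)=\hat\E[\sup_{s\le t}|\Delta_s|^p]$,
\[
\psi(t)\le C\Big(|y'-y|^p+|x'-x|^p+\int_0^t\psi(s)\,ds\Big).
\]
Here $\psi$ is finite by the first part, so no localization is needed. Grönwall gives
\[
\psi(T)\le C e^{CT}\big(|y'-y|^p+|x'-x|^p\big)\to 0 .
\]
This is in fact locally Lipschitz dependence, which is stronger than the continuity claimed.

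The only mildly delicate point is justifying Grönwall in the first part, where finiteness of the moments is not known beforehand; the localization and Fatou step above handles this. Everything else is a routine computation.
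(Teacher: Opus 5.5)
Your proof is correct and follows the paper's argument for the uniform bound essentially step for step: localization by $\tau_n$, BDG and Hölder, Grönwall, then Fatou, with constants uniform in $x$ because of the bounds on $g(\cdot,0)$ and $\sigma(\cdot,0)$. For continuity, the paper runs the Grönwall estimate only in $L^2$ and upgrades to $L^p$ by interpolating against the uniform $L^{2p}$ bound from the first part, whereas your direct $L^p$ estimate is equally valid, avoids the interpolation, and gives the sharper bound $\|\sup_{t\le T}|\hat Y^{x',y'}_t-\hat Y^{x,y}_t|\|_{L^p}\le C(|x'-x|+|y'-y|)$.
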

\begin{proof}
    By the BDG-inequality, there exists a constant $C^{(1)}_p>0$ such that for any continuous $d_2$-dimensional local martingale $(M_t)_{t \geq 0}$ and any stopping time $\tau$, one has
    \begin{align}\label{___Y_BDG_p}
        \hat \E \Big[ \sup_{0\leq t \leq \tau} |M_t|^p \Big] \leq C^{(1)}_p \hat \E [[M]_{\tau}^{p/2}],
    \end{align}
    where $[\cdot]$ denotes the trace of the quadratic covariation of $M$, i.e. 
    \begin{align}\label{___Y_trace_covariation}
        [M]_t := \mathrm{tr} ([M^{(j)},M^{(k)}]_t)_{1 \leq j,k \leq d_2} = \sum_{k=1}^{d_2} [M^{(k)}]_t, \quad t \geq 0.
    \end{align}
    
    Next, for $(x,y) \in \R^{d_1+d_2}$, we define the localizing sequence
    \begin{align*} 
        \tau_N := \tau_N^{x,y} := \inf \{t \geq 0 : |\hat Y^{x,y}_t| \geq N \}, \quad N \in \N.
    \end{align*}
    Then, by the BDG inequality and the boundedness of $g(\cdot,0)$ and $\sigma(\cdot,0)$, there exists a constant $C^{(2)}_{p,T} >0$ such that
    \begin{align*}
        &\hat \E \Big[\sup_{0 \leq s \leq t \wedge \tau_N} |\hat Y^{x,y}_s|^p \Big]
        \\
        &\leq 3^{p-1} \bigg( |y|^p +
        \hat \E \bigg[ \sup_{0 \leq s \leq t \wedge \tau_N} \bigg|\int_0^s g(x,\hat Y^{x,y}_r) dr\bigg|^p \bigg] + \hat \E \bigg[ \sup_{0 \leq s \leq t\wedge \tau_N} \bigg| \int_0^s \sigma(x,\hat Y^{x,y}_r) d \hat W_r \bigg|^p \bigg] \bigg)
        \\
        &\leq 3^{p-1} \bigg( |y|^p +
        \int_0^t t^{p-1} \hat \E[|g(x,\hat Y^{x,y}_{s \wedge \tau_N})|^p] ds + C^{(1)}_p\hat \E \bigg[\bigg( \int_0^{t \wedge \tau_N} |\sigma(x,\hat Y^{x,y}_s)|_F^2 ds \bigg)^{p/2} \bigg] \bigg) 
        \\
        &\leq 3^{p-1}|y|^p + 6^{p-1}\int_0^t \hat \E\Big[ t^{p-1} (|g(x,0)|^p + L_g^p|\hat Y^{x,y}_{s \wedge \tau_N}|^p) + C^{(1)}_p t^{p/2-1}(|\sigma(x,0)|_F^p + L_\sigma^p|\hat Y^{x,y}_{s \wedge \tau_N}|^p ) \Big]ds 
        \\
        &\leq C^{(2)}_{p,T} (|y|^p+1) + C^{(2)}_{p,T} \int_0^t \hat \E \Big[\sup_{0 \leq r \leq s \wedge \tau_N} |\hat Y^{x,y}_r|^p \Big] ds
    \end{align*}
    for all $0 \leq t \leq T$ and $(x,y) \in \R^{d_1+d_2}$.
    As the left-hand side is finite due to localization, Grönwall's and Fatou's lemma yield
    \begin{align*}
        \hat \E \Big[\sup_{0 \leq t \leq T}|\hat Y^{x,y}_t|^p \Big]  \leq \liminf_{N\to \infty }\hat \E \Big[\sup_{0 \leq t \leq T \wedge \tau_N} |\hat Y^{x,y}_t|^p \Big] \leq C^{(2)}_{p,T} e^{C^{(2)}_{p,T} T} (|y|^p +1)
    \end{align*}
    for all $(x,y) \in \R^{d_1+d_2}$ and thus, the first claim.
    
    Next, for any $(x,y) \in \R^{d_1+d_2}$, one can find a constant $C^{(3)}_{T} >0 $ with
    \begin{align*}
        \hat \E \Big[\sup_{0 \leq s \leq t} |\hat Y^{x',y'}_s - \hat Y^{x,y}_s|^2 \Big]
        &\leq 3 \bigg( |y'-y|^2 + \hat \E \bigg[ \bigg( \int_0^t |g(x',\hat Y^{x',y'}_s)-g(x, \hat Y^{x,y}_s)| ds\bigg)^2 \bigg] \\
        &\qquad + C^{(1)}_2 \hat\E \bigg[\int_0^t |\sigma(x',\hat Y^{x',y'}_s)-\sigma(x, \hat Y^{x,y}_s)|_F^2 ds \bigg] \bigg)
        \\
        & \leq C^{(3)}_{T} (|y'-y|^2 + |x'-x|^2) + C^{(3)}_{T} \int_0^t \hat \E \Big[\sup_{0 \leq r \leq s} |\hat Y^{x',y'}_r - \hat Y^{x,y}_r|^2 \Big] ds
    \end{align*}
    for all $0\leq t \leq T$ and $(x',y')\in\R^{d_1+d_2}$.
    Hence, the continuity follows by Grönwall's inequality
    \begin{align*}
        \hat \E \Big[\sup_{0 \leq t \leq T} |\hat Y^{x',y'}_t - \hat Y^{x,y}_t|^2 \Big] \leq C^{(3)}_{T}e^{C^{(3)}_{T}T} (|y'-y|^2+|x'-x|^2 ) \longrightarrow 0,
    \end{align*}
    as $(x',y')\to (x,y)$.
    Since the differences $(\sup_{0 \leq t \leq T}|\hat Y^{x',y'}_t - \hat Y^{x,y}_t|)_{(x',y')\in U}$ are also uniformly $L^{2p}$-bounded in any bounded neighborhood $U$ of $(x,y) \in \R^{d_1+d_2}$ for any $p \geq 2$ (by the previous statement), we get, using $a^p=(a^2)^{p/(2p-2)}(a^{2p})^{(p-2)/(2p-2)}$ for $a \ge 0$ and Hölder inequality with exponents $(2p-2)/p$ and $(2p-2)/(p-2)$, that
    \begin{align*}
        &\Big\|\sup_{0 \leq t \leq T} |\hat Y^{x',y'}_t - \hat Y^{x,y}_t| \Big\|_{L^p}  \\
        &\le \Big\|\sup_{0 \leq t \leq T} |\hat Y^{x',y'}_t - \hat Y^{x,y}_t| \Big\|_{L^2}^{1/(p-1)} \, \Big\|\sup_{0 \leq t \leq T} |\hat Y^{x',y'}_t - \hat Y^{x,y}_t| \Big\|_{L^{2p}}^{(p-2)/(p-1)}    \longrightarrow 0,
    \end{align*}
    for all $p \ge 2$ as $(x',y')\to (x,y)$.
\end{proof}

Next, we show a contraction property and moment bounds for the frozen fast process.

\begin{lemma} \label{lem:frozenmoment} 
    Let $p \geq 2$ and $\beta >0$. Assume that Assumption~\ref{assu:frozenLipschitz} is satisfied and the frozen variable is $p$-dissipative with constant $\beta$. Then, the following holds:
\begin{enumerate}
    \item[(i)] For all $x\in \R^{d_1}$, $y_1,y_2 \in \R^{d_2}$, and $t\geq 0$ we have
        \begin{align*}\|\hat{Y}^{x,y_1}_t-\hat{Y}_t^{x,y_2}\|_{L^p(\hat{\P})} \leq e^{-\beta t}|y_1-y_2|.\end{align*}
    \item[(ii)] There exists a $C_{\beta,p} > 0$ such that for all $x_1,x_2 \in \R^{d_1}$, $y \in \R^{d_2}$, and $t\geq 0$ we have
    \begin{align*}
        \|\hat{Y}^{x_1,y}_t-\hat{Y}^{x_2,y}_t\|_{L^p(\hat{\P})} \leq C_{\beta,p} |x_1-x_2| .
    \end{align*}
    \item[(iii)] There exist $C_{\beta,p}^{(1)}, C_{\beta,p}^{(2)} > 0$ such that for all $x \in \R^{d_1}$, $y \in \R^{d_2}$, and $t\geq 0$ we have       
    \begin{align*}
    \| \hat{Y}^{x,y}_t\|_{L^p(\hat{\P})} \leq \left\{ e^{-p(\beta  /2) t}|y|^p + C_{\beta,p}^{(1)} \right\}^{1/p} \leq e^{-(\beta  /2) t}|y| + C_{\beta,p}^{(2)}. 
    \end{align*}
\end{enumerate}
\end{lemma}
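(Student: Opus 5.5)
All three parts will follow from Itô's formula applied to $|\cdot|^p$ of a suitable process, combined with the $p$-dissipativity inequality. Each time we obtain a differential inequality for the $p$-th moment and close it by Grönwall's lemma. Lemma~\ref{___lem_Y} supplies finite moments on compact intervals, so the stochastic integrals are true martingales, at least after localization with the stopping times $\tau_N$ and a Fatou argument.

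For (i), set $D_t=\hat Y^{x,y_1}_t-\hat Y^{x,y_2}_t$ and write $\Delta g=g(x,\hat Y^{x,y_1}_t)-g(x,\hat Y^{x,y_2}_t)$, $\Delta\sigma$ analogously. Itô's formula gives
\begin{align*}
d|D_t|^p = p|D_t|^{p-2}\Big(\langle D_t,\Delta g\rangle+\tfrac12|\Delta\sigma|_F^2\Big)dt+\tfrac{p(p-2)}{2}|D_t|^{p-4}|\Delta\sigma^\top D_t|^2dt+dM_t .
\end{align*}
We bound $|\Delta\sigma^\top D_t|^2\le|\Delta\sigma|_F^2|D_t|^2$. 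The drift is then at most $p|D_t|^{p-2}\big(\langle D_t,\Delta g\rangle+\frac{p-1}{2}|\Delta\sigma|_F^2\big)\le -p\beta|D_t|^p$. Taking expectations and applying Grönwall yields $\hat\E|D_t|^p\le e^{-p\beta t}|y_1-y_2|^p$, which is (i).

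For (ii), set $D_t=\hat Y^{x_1,y}_t-\hat Y^{x_2,y}_t$ and split the increments as $g(x_1,Y^1)-g(x_2,Y^2)=[g(x_1,Y^1)-g(x_1,Y^2)]+[g(x_1,Y^2)-g(x_2,Y^2)]$, and likewise for $\sigma$. The first bracket is controlled by dissipativity. The second is bounded by $L_g|x_1-x_2|$, respectively $L_\sigma|x_1-x_2|$. The main obstacle is the cross term: the $\sigma$-part is squared, so a $\frac{p-1}{2}|\cdot|_F^2$ budget must be shared. We handle it with $|a+b|^2\le(1+\eta)|a|^2+(1+\eta^{-1})|b|^2$. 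By Remark~\ref{rem:dissibigger}, $p$-dissipativity with a slightly enlarged factor still holds with constant $\beta/2$ for small $\eta$; if one prefers to avoid the remark, spend a fraction of $\beta$ directly. Young's inequality on $|D|^{p-1}|x_1-x_2|$ and $|D|^{p-2}|x_1-x_2|^2$ then gives
\[
\frac{d}{dt}\hat\E|D_t|^p\le-\frac{p\beta}{4}\hat\E|D_t|^p+C|x_1-x_2|^p .
\]
Since $D_0=0$, Grönwall gives the uniform-in-time bound $\hat\E|D_t|^p\le \frac{4C}{p\beta}|x_1-x_2|^p$.

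For (iii), apply Itô to $|\hat Y_t|^p$ and compare with the point $0$. Write $g(x,y)=[g(x,y)-g(x,0)]+g(x,0)$ and $\sigma(x,y)=[\sigma(x,y)-\sigma(x,0)]+\sigma(x,0)$. Dissipativity with $y_2=0$, the bounds $C_g,C_\sigma$, and the same $\eta$-splitting as in (ii) give a drift bounded by $-p\beta|y|^p\cdot\frac34+C(|y|^{p-1}+|y|^{p-2})$. Young's inequality turns this into $\le -\frac{p\beta}{2}|y|^p+C'$. Grönwall then yields $\hat\E|\hat Y_t|^p\le e^{-p\beta t/2}|y|^p+\frac{2C'}{p\beta}$, which is the first inequality with $C^{(1)}_{\beta,p}=2C'/(p\beta)$. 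The second inequality follows from subadditivity of $s\mapsto s^{1/p}$.
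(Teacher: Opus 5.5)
Your proposal is correct and follows essentially the same route as the paper: Itô's formula for $|\cdot|^p$, the bound $|A^\top v|\le|A|_F|v|$, $p$-dissipativity, Young's inequality, and Grönwall in differential form. The true-martingale property comes from Lemma~\ref{___lem_Y}, as you say.

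The only deviation is in (ii) and (iii), in how you handle the $\sigma$ cross term, and it is inessential. You split with $(1+\eta)$ and absorb the excess through Remark~\ref{rem:dissibigger}. The paper instead expands $|a+b|_F^2$ exactly and bounds $2\,\mathrm{tr}(a^\top b)$ by Lipschitz constants, which leaves a $|Z_t|^{p-1}|x_1-x_2|$ term (respectively a $|\hat Y_t|^{p-1}$ term) that Young's inequality absorbs.
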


\begin{proof}
    (i): Fix $x \in \R^{d_1}$, $y_1,y_2 \in \R^{d_2}$, and, for $t\ge 0$, set
            \begin{align*}
            Z_t 
            &:= \hat{Y}^{x,y_1}_t - \hat{Y}^{x,y_2}_t \\
            &= y_1-y_2 + \int_0^t ( g(x,\hat{Y}^{x,y_1}_s)-g(x,\hat{Y}^{x,y_2}_s) ) ds + \int_0^t ( \sigma(x,\hat{Y}^{x,y_1}_s)-\sigma(x,\hat{Y}^{x,y_2}_s) ) d \hat W_s.
            \end{align*}
            Applying Itô's formula yields
           \begin{align*}
            |Z_t|^p
            = &|y_1-y_2|^p \\
        &+ \int_0^t p |Z_s|^{p-2}
        \langle Z_s,
        g(x,\hat{Y}^{x,y_1}_s)-g(x,\hat{Y}^{x,y_2}_s)
        \rangle ds \\
        & + \int_0^t p |Z_s|^{p-2}
        \langle Z_s,
        (\sigma(x,\hat{Y}^{x,y_1}_s)-\sigma(x,\hat{Y}^{x,y_2}_s))d \hat W_s
        \rangle \\
        & + \int_0^t \frac{p}{2} |Z_s|^{p-2}
        |
        \sigma(x,\hat{Y}^{x,y_1}_s)-\sigma(x,\hat{Y}^{x,y_2}_s)
        |_{F}^2 ds \\
        & + \int_0^t \frac{p(p-2)}{2} |Z_s|^{p-4}
        |
        (\sigma(x,\hat{Y}^{x,y_1}_s)-\sigma(x,\hat{Y}^{x,y_2}_s))^\top Z_s
        |^2 ds,
        \end{align*}
        where
        $$
            M_t := \int_0^t p |Z_s|^{p-2}
        \langle Z_s,
        \big(\sigma(x,\hat{Y}^{x,y_1}_s)-\sigma(x,\hat{Y}^{x,y_2}_s)\big)d \hat W_s
        \rangle
        $$
        is an $L^2$-martingale, since Lemma~\ref{___lem_Y} implies that
        \[
        \hat \E \Big[ \int_0^t |Z_s|^{2p-4}|(\sigma(x,\hat Y_s^{x,y_1})-\sigma(x,\hat Y_s^{x,y_2}))^\top Z_s|^2 \, ds \Big]
        \le L_\sigma^2\int_0^T\mathbb{E} [ |Z_s|^{2p} ]\,ds <\infty , \quad 0\leq t \leq T
        \]
        for any finite $t \in [0,T]$ by Lemma~\ref{___lem_Y}. Thus, taking expectation yields
        \begin{align} \begin{split} \label{eq:2307272397}
            \hat{\E}[|Z_t|^p] = |y_1-y_2|^p + \int_0^t \hat{\E}\Big[&p|Z_s|^{p-2} \langle Z_s, g(x,\hat{Y}^{x,y_1}_s)-g(x,\hat{Y}^{x,y_2}_s) \rangle \\
            &+ \frac{p}{2} |Z_s|^{p-2} \, |\sigma(x,\hat{Y}^{x,y_1}_s)-\sigma(x,\hat{Y}^{x,y_2}_s)|_F^2 \\
            &+ \frac{p(p-2)}{2} |Z_s|^{p-4} \, |(\sigma(x,\hat{Y}^{x,y_1}_s)-\sigma(x,\hat{Y}^{x,y_2}_s))^\top Z_s| ^2 \Big] ds.
            \end{split}
        \end{align}
        Differentiating \eqref{eq:2307272397} and using $|A^\top v| \le |A| \, |v| \le |A|_F \, |v|$ for all $A \in \R^{d_2\times d_2} $ and $v \in \R^{d_2}$, as well as the uniform $p$-dissipativity, we get for all $t\geq 0$
        \begin{align*}
            \frac{d}{dt} \hat{\E}[|Z_t|^p] &\leq  p\hat{\E}\Big[|Z_t|^{p-2} \Big( \langle Z_t, g(x,\hat{Y}^{x,y_1}_t)-g(x,\hat{Y}^{x,y_2}_t) \rangle + \frac{p-1}{2} |\sigma(x,\hat{Y}^{x,y_1}_t)-\sigma(x,\hat{Y}^{x,y_2}_t)|_F^2 \Big) \Big] \\
            &\leq -p\beta \hat{\E}[|Z_t|^p].
        \end{align*}
        Hence, by Grönwall's differential inequality we have for all $t \ge 0$
        \begin{align*}
            \hat{\E}[|Z_t|^p] \leq e^{-\beta p t} |y_1-y_2|^p.
        \end{align*}
            
    (ii): Fix $x_1,x_2 \in \R^{d_1}$, $y \in \R^{d_2}$ and, for $t\ge 0$, set
            \begin{align*}
            Z_t 
            &:= \hat{Y}^{x_1,y}_t - \hat{Y}^{x_2,y}_t \\
            &= \int_0^t ( g(x_1,\hat{Y}^{x_1,y}_s)-g(x_2,\hat{Y}^{x_2,y}_s) ) ds + \int_0^t ( \sigma(x_1,\hat{Y}^{x_1,y}_s)-\sigma(x_2,\hat{Y}^{x_2,y}_s) ) d\hat W_s.
            \end{align*}
            Analogously to before, we get by Itô's formula and taking expectation that
            \begin{align*}
                \hat{\E}[|Z_t|^p] =  \int_0^t \hat{\E}\Big[ & p|Z_s|^{p-2}  \langle Z_s, g(x_1,\hat{Y}^{x_1,y}_s)-g(x_2,\hat{Y}^{x_2,y}_s) \rangle \\
                &+ \frac{p}{2} |Z_s|^{p-2} \, |\sigma(x_1,\hat{Y}^{x_1,y}_s)-\sigma(x_2,\hat{Y}^{x_2,y}_s)|_F^2 \\
                &+ \frac{p(p-2)}{2} |Z_s|^{p-4} \, |(\sigma(x_1,\hat{Y}^{x_1,y}_s)-\sigma(x_2,\hat{Y}^{x_2,y}_s))^\top Z_s| ^2 \Big] ds,
            \end{align*}
            so that after differentiating we get
            \begin{align*}
            \begin{split}
                %\label{eq:230723fuh3i}
                &\frac{d}{dt} \hat{\E}[|Z_t|^p] \\
                &\leq  p\hat{\E}\Big[|Z_t|^{p-2} \Big( \langle Z_t, g(x_1,\hat{Y}^{x_1,y}_t)-g(x_2,\hat{Y}^{x_2,y}_t) \rangle + \frac{p-1}{2} |\sigma(x_1,\hat{Y}^{x_1,y}_t)-\sigma(x_2,\hat{Y}^{x_2,y}_t)|_F^2 \Big) \Big] \\
                &= p\hat{\E}\Big[|Z_t|^{p-2} \Big( \langle Z_t, g(x_1,\hat{Y}^{x_1,y}_t)-g(x_1,\hat{Y}^{x_2,y}_t) \rangle + \frac{p-1}{2} |\sigma(x_1,\hat{Y}^{x_1,y}_t)-\sigma(x_1,\hat{Y}^{x_2,y}_t)|_F^2 \Big) \Big] \\
                    &\quad + p\hat{\E}\Big[|Z_t|^{p-2} \Big( \langle Z_t, g(x_1,\hat{Y}^{x_2,y}_t)-g(x_2,\hat{Y}^{x_2,y}_t) \rangle + \frac{p-1}{2} |\sigma(x_1,\hat{Y}^{x_2,y}_t)-\sigma(x_2,\hat{Y}^{x_2,y}_t)|_F^2 \Big) \Big] \\
                    &\quad + p(p-1)\hat{\E}\Big[|Z_t|^{p-2} \mathrm{tr} \Big( (\sigma(x_1,\hat{Y}^{x_1,y}_t)-\sigma(x_1,\hat{Y}^{x_2,y}_t))^\top(\sigma(x_1,\hat{Y}^{x_2,y}_t)-\sigma(x_2,\hat{Y}^{x_2,y}_t)) \Big) \Big] \\
                &\leq - p\beta \hat{\E}[|Z_t|^p] + (p L_g + p(p-1)L_\sigma^2)\,  |x_1-x_2| \, \hat{\E}[|Z_t|^{p-1}] \\
                &\quad + \frac{p(p-1)L_\sigma^2}{2} |x_1-x_2|^2 \,  \hat{\E}[|Z_t|^{p-2}] 
            \end{split}
            \end{align*}
            for all $t \geq 0$. Then, by Young's inequality,
            \begin{align} \label{___Young_ineq}
                u_1^{p-k} u_2^k = (\delta^{(p-k)/p} u_1^{p-k}) (\delta^{-(p-k)/p}u_2^k) 
                \leq \frac{p-k}{p} \delta u_1^p + \frac{k}{p} \delta^{-(p-k)/k} u_2^p\leq \delta u_1^p + \delta^{-(p-k)/k} u_2^p
            \end{align}
            for $k=1,2$ and arbitrary $\delta >0$, $u_1,u_2 \geq 0$, there exists $C_{\beta,p} >0$ with
            \begin{align*}
                \frac{d}{dt} \hat{\E}[|Z_t|^p] \leq -\frac{p\beta}{2}\hat{\E}[|Z_t|^p] + C_{\beta,p} |x_1-x_2|^p, \quad t \geq 0,
            \end{align*}
            such that, by Grönwall's differential inequality,
            \begin{align*}
                \hat{\E}[|Z_t|^p] \leq \frac{2C_{\beta,p}}{p\beta} |x_1-x_2|^p (1-e^{-p(\beta /2)t}) \leq \frac{2C_{\beta,p}}{p\beta} |x_1-x_2|^p, \quad t \geq 0.
            \end{align*}
            
        (iii): Fix $x \in \R^{d_1}$, $y \in \R^{d_2}$. By Itô's formula
            \begin{align*}
                \hat{\E}[|\hat{Y}^{x,y}_t|^p] 
                = |y|^p &+ p\int_0^t \hat{\E}\Big[
                |\hat{Y}^{x,y}_s|^{p-2}\langle \hat{Y}^{x,y}_s , g(x,\hat{Y}^{x,y}_s)\rangle \\
                &+\frac{1}{2}|\hat{Y}^{x,y}_s|^{p-2}|\sigma(x,\hat{Y}^{x,y}_s)|_F^2 
                + \frac{p-2}{2} |\hat{Y}^{x,y}_s|^{p-4} 
                |\sigma(x,\hat{Y}^{x,y}_s)^\top \hat{Y}^{x,y}_s|^2
                \Big]ds.
            \end{align*}
            Using Assumptions~\ref{assu:frozenLipschitz} and $p$-dissipativity, we get for $t \ge 0$
            \begin{align*}
                \frac{d}{dt} \hat{\E}[|\hat{Y}^{x,y}_t|^p] 
                \leq& 
                p \hat{\E}\Big[
                |\hat{Y}^{x,y}_t|^{p-2} \Big(
                \langle \hat{Y}^{x,y}_t , g(x,\hat{Y}^{x,y}_t)-g(x,0)\rangle 
                +\langle \hat{Y}^{x,y}_t , g(x,0)\rangle
                \\
                &
                \qquad + \frac{p-1}{2} |\sigma(x,\hat{Y}^{x,y}_t)-\sigma(x,0)|_F^2 
                + \frac{p-1}{2} |\sigma(x,0)|_F^2 
                \\
                &
                \qquad + (p-1) |\sigma(x,\hat{Y}^{x,y}_t)-\sigma(x,0)|_F |\sigma(x,0)|_F 
                \Big)  \Big] 
                \\
                \leq& -p\beta \hat{\E}[|\hat{Y}^{x,y}_t|^p] 
                + p (C_g+(p-1)L_\sigma C_\sigma) 
                \hat{\E}[|\hat{Y}^{x,y}_t|^{p-1}] 
                \\
                &+ \frac{p(p-1)}{2} 
                C_\sigma^2
                \hat{\E}[|\hat{Y}^{x,y}_t|^{p-2}].
            \end{align*}
            Using
            $\hat{\E}[|\hat{Y}^{x,y}_t|^{p-k}]
            \leq\hat{\E}[|\hat{Y}^{x,y}_t|^{p}]^{(p-k)/p}$, $k=1,2$, 
            one can find a constant $ C_{\beta,p}>0$  independent of $x,y$ with
            \begin{align*}
                \frac{d}{dt} \hat{\E}[|\hat{Y}^{x,y}_t|^p] 
                \leq -\frac{p\beta}{2}\hat{\E}[|\hat{Y}^{x,y}_t|^p] 
                + C_{\beta,p}.
            \end{align*}
            By Grönwall's differential inequality, the claim follows by
            \begin{align*}
                 \hat{\E}[|\hat{Y}^{x,y}_t|^p] 
                 \leq |y|^p e^{-p (\beta /2)t} 
                 + \frac{2C_{\beta,p} }{\beta p} 
                 (1-e^{-p (\beta /2)t}) 
                 \leq |y|^p e^{-p (\beta /2)t} 
                 + \frac{2C_{\beta,p} }{\beta p}.
            \end{align*}
\end{proof}

For $x \in \R^{d_1}$, we denote by
    $(P^x_t)_{t\geq 0}$ the Markov semigroup associated with
    $(\hat{Y}^{x,y}_t)_{t\geq 0}$, that is,
    \[
        P^x_t\varphi(y)
        :=
        \hat{\E}[\varphi(\hat{Y}^{x,y}_t)],
        \qquad \varphi\in B_b(\R^{d_2}).
    \]
The same notation will be used for Borel-measurable functions $\varphi$ of at most linear growth. In this case, $P_t^x\varphi(y)$ is well defined due to Lemma~\ref{lem:frozenmoment}.    

Lemma~\ref{lem:frozenmoment} (i) shows that, in expectation, solutions to the frozen SDE get contracted and the initial condition is forgotten exponentially fast. It is straightforward to prove the following consequence.

\begin{lemma} \label{lem:invariantmoment}
    Let $p\geq 2$ and $\beta >0$. Suppose that Assumptions~\ref{assu:frozenLipschitz} is satisfied and the frozen variable is $p$-dissipative with constant $\beta$.
    Then, for every $x\in\R^{d_1}$, the frozen process $(\hat{Y}^{x,y}_t)_{t\geq 0}$ admits a unique invariant distribution $\mu_x\in\mathcal{P}_p(\R^{d_2})$ such that
$$
\sup_{x\in\R^{d_1}}
\int |z|^p \, \mu_x(dz)
< \infty
$$
and 
$$
W_p(\mathcal{L} (\hat{Y}^{x,y}_t),\mu_x) \leq e^{-\beta t} \Big(\int|y-z|^p \,\mu_x(dz)\Big)^{1/p}
$$
for all $x\in\R^{d_1}$, $y\in\R^{d_2}$, and $t\geq 0$.
\end{lemma}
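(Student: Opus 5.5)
The idea is to build $\mu_x$ as a $W_p$-limit, using the contraction in Lemma~\ref{lem:frozenmoment}(i) on the complete metric space $(\mathcal P_p(\R^{d_2}),W_p)$. Fix $x$ and write $\nu P^x_t$ for the law at time $t$ of the frozen process started from $\nu$. For $\nu_1,\nu_2\in\mathcal P_p$, take an optimal coupling $(\xi_1,\xi_2)$ independent of $\hat W$. Lemma~\ref{lem:frozenmoment}(i), applied conditionally on the initial values (the flow $y\mapsto \hat Y^{x,y}_t$ is measurable and continuous in $L^p$ by Lemma~\ref{___lem_Y}), then gives
\begin{align*}
W_p(\nu_1P^x_t,\nu_2P^x_t)\le e^{-\beta t}W_p(\nu_1,\nu_2).
\end{align*}
By Lemma~\ref{lem:frozenmoment}(iii), $\nu P^x_t\in\mathcal P_p$ whenever $\nu\in\mathcal P_p$. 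Hence $\nu\mapsto \nu P^x_1$ is a strict contraction, and Banach's fixed point theorem produces a unique fixed point $\mu_x\in\mathcal P_p$.

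Next I show that $\mu_x$ is invariant for every $t$, not just $t=1$. By the semigroup property, $\mu_xP^x_t$ is also a fixed point of $P^x_1$, so $\mu_xP^x_t=\mu_x$ by uniqueness. This semigroup step needs the Markov property of strong solutions, which is standard under Lipschitz coefficients. Uniqueness among all invariant laws in $\mathcal P_p$ follows from the contraction. For invariant laws not a priori in $\mathcal P_p$, I would note that the statement claims uniqueness only within $\mathcal P_p$. If more is wanted, one can use the $W_1$-contraction (by Jensen it follows from (i)) together with the tightness bound from (iii).

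For the moment bound, I use $\mu_x=\lim_n \delta_0P^x_n$ in $W_p$. Then
\begin{align*}
\int|z|^p\,\mu_x(dz)=W_p(\mu_x,\delta_0)^p=\lim_n W_p(\delta_0P^x_n,\delta_0)^p=\lim_n\hat\E|\hat Y^{x,0}_n|^p\le C^{(1)}_{\beta,p},
\end{align*}
by Lemma~\ref{lem:frozenmoment}(iii), and this bound is uniform in $x$.

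Finally, for the rate, write
\begin{align*}
W_p(\mathcal L(\hat Y^{x,y}_t),\mu_x)=W_p(\delta_yP^x_t,\mu_xP^x_t)\le e^{-\beta t}W_p(\delta_y,\mu_x),
\end{align*}
where $W_p(\delta_y,\mu_x)=(\int|y-z|^p\mu_x(dz))^{1/p}$ because the only coupling with a Dirac mass is the product coupling. Alternatively, to avoid the conditional argument, I can directly couple the solution started at $y$ with one started at $\xi\sim\mu_x$ independent of $\hat W$, use (i) pointwise in $\xi$, and integrate.

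The main technical obstacle is the conditioning step: justifying that Lemma~\ref{lem:frozenmoment}(i) extends to random initial conditions independent of the noise. I will handle it through the measurable dependence of the strong solution on the initial value (a Yamada–Watanabe-type representation $\hat Y^{x,\xi}=F(\xi,\hat W)$), then integrate the deterministic-initial-condition estimate against the joint law of the initial values.
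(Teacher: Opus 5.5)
Your proof is correct, but it follows a different route from the paper. The paper constructs $\mu_x$ by a Krylov--Bogoliubov argument. The Ces\`aro averages $\nu_T^x=\frac1T\int_0^T\mathcal L(\hat Y^{x,0}_t)\,dt$ are tight by Lemma~\ref{lem:frozenmoment}~(iii), and Prokhorov yields a weak limit along a subsequence. The moment bound then follows by lower semicontinuity, and invariance is checked via the Feller property $P^x_r\varphi\in C_b$ and a telescoping estimate. The contraction (i) is used only afterwards, to compare $\delta_y$ with the already-invariant $\mu_x$. Uniqueness among \emph{all} invariant probability measures comes from $\int\varphi\,d\tilde\mu_x=\int P^x_t\varphi\,d\tilde\mu_x\to\int\varphi\,d\mu_x$ by dominated convergence.

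Your Banach fixed-point argument on $(\mathcal P_p,W_p)$ obtains existence, uniqueness in $\mathcal P_p$, the moment bound and the rate from the single estimate $W_p(\nu_1P^x_t,\nu_2P^x_t)\le e^{-\beta t}W_p(\nu_1,\nu_2)$. It needs neither Prokhorov nor the Feller property, and it identifies $\mu_x$ as the $W_p$-limit of $\mathcal L(\hat Y^{x,0}_n)$ itself. The cost is the completeness of $(\mathcal P_p,W_p)$ and the contraction for general initial laws.

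The step you flag as the main obstacle can be bypassed as the paper does in its rate estimate. Set $\nu P^x_t:=\int\mathcal L(\hat Y^{x,y}_t)\,\nu(dy)$ and use the mixture $\int\mathcal L(\hat Y^{x,y_1}_t,\hat Y^{x,y_2}_t)\,\pi(dy_1,dy_2)$ over an optimal coupling $\pi$. This is a measurable kernel by the $L^p$-continuity in Lemma~\ref{___lem_Y}, so no random initial data or Yamada--Watanabe representation are needed.

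Likewise, the $W_1$/tightness detour for uniqueness is unnecessary, and it would first require showing that an arbitrary invariant law has a finite moment. Your rate already gives $P^x_t\varphi(y)\to\int\varphi\,d\mu_x$ pointwise for $\varphi\in C_b$, and the paper's dominated-convergence line finishes the argument.
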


\begin{proof}
For fixed $x\in\R^{d_1}$, let 
$$
\nu_T^x
:=
\frac{1}{T}
\int_0^T
\mathcal{L}(\hat{Y}^{x,0}_t)\ ,dt.
$$
By Lemma~\ref{lem:frozenmoment}~(iii), we have
$$
\int |z|^p \, \nu_T^x(dz)= \frac{1}{T}\int_0^T\hat{\E}[|\hat{Y}^{x,0}_t|^p
]dt\leq C_{\beta,p}
$$
for a constant $C_{\beta,p}>0$.
Hence, $(\nu_T^x)_{T>0}$ is tight and, by Prokhorov's theorem, there exists a sequence $(T_n)_{n \in \N}$ with $T_n \overset{n \to \infty}{\longrightarrow}\infty$ and a probability measure $\mu_x$ such that $\nu_{T_n}^x \Rightarrow \mu_x$.
The Portmanteau theorem and Lemma~\ref{lem:frozenmoment}~(iii) yield existence of a constant $C_{\beta, p} >0 $ such that
\begin{align*}
    \int |z|^p\,\mu_x(dz)
    \leq
    \liminf_{n\to\infty}
    \int |z|^p \, d\nu_{T_n}^x(z) \le C_{\beta,p}.
\end{align*}

Moreover, $\mu_x$ is invariant, since for all $\varphi \in C_b(\R^{d_2})$ and $r \ge 0$ one has $P_r^x \varphi \in C_b(\R^{d_2})$ and, thus, for all $s \ge 0$
\begin{align*}
    \bigg| \int P_s^x \varphi \, d\nu_T^x -\int \varphi \, d\nu_T^x \bigg| \le \frac{1}{T} \bigg| \int_T^{T+s} P_r^x \varphi(0) \, dr - \int_0^s P_r^x \varphi(0) \, dr \bigg| \overset{T \to \infty}{\longrightarrow}0.
\end{align*}

Using the invariance of $\mu_x$ and Lemma~\ref{lem:frozenmoment}~(i), we obtain
$$
W_p(\mathcal{L} (\hat{Y}^{x,y}_t),\mu_x)^p\leq\int \hat{\E}[|\hat{Y}^{x,y}_t-\hat{Y}^{x,z}_t|^p]\, \mu_x(dz)
\leq e^{-p\beta t} \int |y-z|^p\,\mu_x(dz).
$$

Finally, let $\tilde{\mu}_x$ be another invariant probability measure. Then, for every $f\in C_b(\R^{d_2})$, dominated convergence gives
$$
\int f(z)\,\tilde{\mu}_x(dz)
=
\int P_t^xf(y)\,\tilde{\mu}_x(dy)
\overset{t\to\infty}{\longrightarrow}
\int f(z)\,\mu_x(dz),
$$
so that $\tilde{\mu}_x=\mu_x$.
\end{proof}

\subsection{Properties of the fast process}
In this section, we provide moment bounds for the fast process $(Y_t)_{t \ge 0}$. Compared to the estimates in the previous section, one has to take into account the movement of the slow variable and the resulting change in the drift and diffusion matrix for the fast variable.

Let us introduce the fast and slow time-scales
\begin{align*}
    \chi &\colon[0 , \infty )\to[0,\infty),\ \chi(t) = \int_{0}^t \frac{1}{\varepsilon_s}\, ds \\
    \psi &\colon [0,\infty) \to [0,\infty),\ \psi(t) = \chi^{-1}(t).
\end{align*}
with derivatives 
\begin{align*}
    \chi'(t) = \frac{1}{\varepsilon_t} \quad \text{and} \quad \psi'(t) = \varepsilon_{\psi(t)}.
\end{align*}

\begin{lemma} \label{lem:momentfast} 
    Let $\beta >0$ and $p \geq 2$. Suppose that Assumptions~\ref{assu:regularity} and~\ref{assu:epsilon} are satisfied and that the fast variable $(Y_t)_{t \ge 0}$ with initial condition $y \in \R^{d_2}$ is $p$-dissipative with constant $\beta$. Then, there exists $C_{\beta,p}^{(1)}, C_{\beta,p}^{(2)} > 0$ such that for all $t\geq 0$
    \begin{align*}
    \| Y_t\|_{L^p} \leq ( e^{-p(\beta  /2) \chi(t)}|y|^p + C_{\beta,p}^{(1)} )^{1/p} \leq e^{-(\beta  /2) \chi(t)}|y| + C_{\beta,p}^{(2)}.
    \end{align*}
\end{lemma}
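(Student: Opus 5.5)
The plan is to repeat the argument of Lemma~\ref{lem:frozenmoment}~(iii), now for the fast variable of \eqref{eq:fastslow0}, and then to convert to the fast clock $\chi$. The key observation is that $p$-dissipativity and the bounds $|g(x,0)|\le C_g$, $|\sigma(x,0)|_F\le C_\sigma$ hold uniformly in $x$. The slow variable therefore only enters through the frozen coefficient value $x=X_t$, and the resulting estimate is independent of the path of $X$.

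First I localize with $\tau_N=\inf\{t\ge0:|Y_t|\ge N\}$ and apply Itô's formula to $|Y_t|^p$. This gives
\begin{align*}
d|Y_t|^p = \frac{p}{\eps_t}|Y_t|^{p-2}\Big(\langle Y_t,g(X_t,Y_t)\rangle+\frac12|\sigma(X_t,Y_t)|_F^2+\frac{p-2}{2}\frac{|\sigma(X_t,Y_t)^\top Y_t|^2}{|Y_t|^2}\Big)dt + dM_t
\end{align*}
with a local martingale $M$. Next I bound the bracket exactly as in the proof of Lemma~\ref{lem:frozenmoment}~(iii), evaluated at the frozen point $x=X_t$. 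Splitting $g(X_t,Y_t)=(g(X_t,Y_t)-g(X_t,0))+g(X_t,0)$, doing the same for $\sigma$, and using $p$-dissipativity with $y_2=0$, I obtain the pointwise bound
\begin{align*}
p|Y_t|^{p-2}(\cdots)\le -p\beta|Y_t|^p + p(C_g+(p-1)L_\sigma C_\sigma)|Y_t|^{p-1}+\tfrac{p(p-1)}{2}C_\sigma^2|Y_t|^{p-2}.
\end{align*}
Young's inequality \eqref{___Young_ineq} then turns this into $\le -\frac{p\beta}{2}|Y_t|^p+C_{\beta,p}$, with a constant independent of $x$.

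Taking expectations, the stopped martingale vanishes. Letting $N\to\infty$ is justified by the standard a priori bound $\E[\sup_{s\le t}|Y_s|^p]<\infty$, which holds on compact intervals for Lipschitz coefficients since $\eps$ is continuous and positive. This yields, for $m(t):=\E[|Y_t|^p]$,
\begin{align*}
m(t)\le |y|^p+\int_0^t\frac{1}{\eps_s}\Big(-\frac{p\beta}{2}m(s)+C_{\beta,p}\Big)ds.
\end{align*}
I then change to the fast time: set $\tilde m(u)=m(\psi(u))$, using $\psi'(u)=\eps_{\psi(u)}$. The inequality becomes $\tilde m(u)\le |y|^p+\int_0^u(-\frac{p\beta}{2}\tilde m(r)+C_{\beta,p})\,dr$, and Grönwall's comparison lemma gives $\tilde m(u)\le e^{-p\beta u/2}|y|^p+\frac{2C_{\beta,p}}{p\beta}$. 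Substituting back $u=\chi(t)$ gives the first inequality, and the second follows from $(a+b)^{1/p}\le a^{1/p}+b^{1/p}$.

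I expect the main obstacle to be the rigorous passage to the expected inequality. One could differentiate $m$, as was done in the frozen case, but this requires continuity of $s\mapsto\E[\ldots]$. To avoid that, I would work with the integral form and apply Grönwall's lemma in its integral version for the linear inequality. Alternatively, I would apply Itô's formula directly to $e^{(p\beta/2)\chi(t)}|Y_t|^p$, whose drift is bounded by $e^{(p\beta/2)\chi(t)}C_{\beta,p}/\eps_t$. Integrating this drift yields exactly $\frac{2C_{\beta,p}}{p\beta}(e^{(p\beta/2)\chi(t)}-1)$, and this route sidesteps both the differentiability issue and the time change.
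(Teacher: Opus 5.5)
Your argument is essentially the paper's. Both run the Itô/dissipativity/Young computation of Lemma~\ref{lem:frozenmoment}~(iii) at the frozen point $x=X_t$, using that the dissipativity constant and the bounds on $g(\cdot,0),\sigma(\cdot,0)$ are uniform in $x$. The only difference is how the clock is handled: the paper time-changes the SDE itself, using the Brownian motion $\widetilde W=\int_0^{\psi(\cdot)}\eps_s^{-1/2}\,dW_s$. You instead transfer only the moment inequality through $\psi'=\eps_{\psi}$, or absorb the clock into the weight $e^{(p\beta/2)\chi(t)}$.

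One caution on your primary route. The inequality $\tilde m(u)\le |y|^p+\int_0^u(-\frac{p\beta}{2}\tilde m(r)+C_{\beta,p})\,dr$, stated only from time $0$, does not by itself yield the decay, because integral-form Grönwall needs a nonnegative coefficient. For example, a nonnegative continuous function that drops to $0$ immediately and climbs back to nearly $|y|^p$ just before $u=1$ satisfies it. So you must either derive the inequality on every interval $[s,u]$, or use your exponential-weight route. That route, with Fatou's lemma as $N\to\infty$, is fully rigorous and does not even need the a priori bound on $\E[\sup_{s\le t}|Y_s|^p]$.
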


\begin{proof}
    By a change of time, the fast variable satisfies
    \begin{align*}
        dY_{\psi(t)} = g(X_{\psi(t)},Y_{\psi(t)})dt +\sigma(X_{\psi(t)},Y_{\psi(t)})d\widetilde{W}_t
    \end{align*}
    where $\widetilde{W} := \int_0^{\psi(\cdot)} \frac{1}{\sqrt{\eps_s}}dW_s$ is an $\{\mathcal{F}_{\psi(t)}\}_{t\geq 0}$-Brownian motion. Since $g$ and $\sigma$ are uniformly bounded in $x$ and the fast variable is $p$-dissipative, one can follow the same arguments as in the proof of Lemma~\ref{lem:frozenmoment}, to show existence of a constant $C_{\beta,p}^{(1)} >0$ satisfying for all $t\ge 0$ that
    \begin{align*}
        \E [|Y_{\psi(t)}|^p] \leq& e^{-p(\beta/2)t} |y|^p  + C_{\beta,p}^{(1)}.
    \end{align*}
    The claim follows by resubstituting $t=\chi(u)$ so that $\psi(\chi(u))=u$.
\end{proof}

\subsection{The Poisson-equation}

We consider
\begin{align} \label{eq:Phiproofs}
    \Phi \colon \mathbb R^{d_1+d_2} \to \mathbb R^{d_1},\qquad
    \Phi(x,y) 
    = \int_0^\infty 
    \bar{f}(x)-P_t^x f(x,y)  dt.
\end{align}

\begin{lemma} \label{lem:barfLipschitz} 
    Suppose that Assumption~\ref{assu:frozenLipschitz} is satisfied, that $(\hat Y^x_t)_{t \geq 0}$ is $2$-dissipative and that $f$ is $L_f$-Lipschitz. Then,
    $\Phi: \R^{d_1+d_2} \to \R^{d_1}$ defined in \eqref{eq:Phiproofs} is well-defined and there exists a constant $C > 0$ such that  
    \begin{align} \label{eq:23r8u334gh94h9}
    |\Phi(x,y)| \leq  C(1+|y|) \quad \text{ for all } x\in \R^{d_1}, y \in \R^{d_2}.
    \end{align}
    Moreover, the averaged function $\bar{f}$ is Lipschitz-continuous.     
\end{lemma}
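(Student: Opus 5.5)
The plan is to bound the integrand of $\Phi$ pointwise in $t$ by an exponentially decaying quantity, using the Wasserstein contraction from Lemma~\ref{lem:invariantmoment} with $p=2$. For fixed $x,y$ and $t\ge 0$ we have $\bar f(x) = \int f(x,z)\,\mu_x(dz)$. Invariance of $\mu_x$ gives $\bar f(x)=\int P_t^x f(x,z)\,\mu_x(dz)$, and hence
\begin{align*}
|\bar f(x)-P_t^xf(x,y)| \le \int \hat\E\big[|f(x,\hat Y^{x,z}_t)-f(x,\hat Y^{x,y}_t)|\big]\,\mu_x(dz) \le L_f\int \|\hat Y^{x,z}_t-\hat Y^{x,y}_t\|_{L^2}\,\mu_x(dz).
\end{align*}
Lemma~\ref{lem:frozenmoment}(i) with $p=2$ then bounds the right-hand side by $L_f e^{-\beta t}\int|y-z|\,\mu_x(dz)$. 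This in turn is at most $L_f e^{-\beta t}(|y| + \sup_{x}(\int|z|^2\mu_x(dz))^{1/2})$, where the supremum is finite by Lemma~\ref{lem:invariantmoment}. Integrability in $t$ over $[0,\infty)$ then gives well-definedness of $\Phi$ as an absolutely convergent Bochner/Lebesgue integral. Measurability of $t\mapsto P^x_tf(x,y)$ follows from continuity of the paths. We also get \eqref{eq:23r8u334gh94h9} with $C = \beta^{-1}L_f\max(1,\sup_x \|\mu_x\|_{2})$.

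For the Lipschitz continuity of $\bar f$, I split
\begin{align*}
|\bar f(x_1)-\bar f(x_2)| \le |\bar f(x_1) - P^{x_1}_t f(x_1,0)| + |P^{x_1}_tf(x_1,0)-P^{x_2}_tf(x_2,0)| + |P^{x_2}_t f(x_2,0)-\bar f(x_2)|.
\end{align*}
The first and third terms are bounded by $C e^{-\beta t}$ by the estimate above, with $y=0$ and a constant uniform in $x$. For the middle term I use the Lipschitz property of $f$ jointly in $(x,y)$ together with Lemma~\ref{lem:frozenmoment}(ii) with $p=2$. This gives
\begin{align*}
\hat\E\big[|f(x_1,\hat Y^{x_1,0}_t)-f(x_2,\hat Y^{x_2,0}_t)|\big] \le L_f\big(|x_1-x_2| + C_{\beta,2}|x_1-x_2|\big),
\end{align*}
uniformly in $t$. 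Letting $t\to\infty$ kills the first and third terms and yields $L_{\bar f}=L_f(1+C_{\beta,2})$.

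The only point requiring care is the uniformity in $x$ of the second moment of $\mu_x$ and of the contraction constants; both are exactly what Lemmas~\ref{lem:frozenmoment} and~\ref{lem:invariantmoment} provide under uniform $2$-dissipativity and Assumption~\ref{assu:frozenLipschitz}. I also need to justify exchanging $\bar f(x)=\int P^x_t f(x,z)\mu_x(dz)$ for $f$ of linear growth rather than bounded $f$. I would do this by approximating $f(x,\cdot)$ with bounded truncations and passing to the limit by dominated convergence, using the finite second moment of $\mu_x$ and Lemma~\ref{lem:frozenmoment}(iii). I expect this justification to be the main (though minor) obstacle.
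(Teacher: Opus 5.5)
Your proposal is correct and follows essentially the same route as the paper: invariance of $\mu_x$ plus the synchronous-coupling contraction of Lemma~\ref{lem:frozenmoment}(i) give the bound $L_f e^{-\beta t}(|y|+C)$ on the integrand, and then the three-term split with Lemma~\ref{lem:frozenmoment}(ii) and $t\to\infty$ yields $L_{\bar f}=L_f(1+C_{\beta,2})$. Your extra justifications (extending invariance to linear-growth $f$ via truncation, measurability in $t$) fill in points the paper uses tacitly, and fixing $y=0$ in the split is immaterial.
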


\begin{proof}
    By Lemma~\ref{lem:invariantmoment} there exists $C^{(1)}_\beta > 0$ with
\begin{align*}
    \sup_{x \in \R^{d_1}}\int |y'| \, \mu_x(dy') \leq \sup_{x \in \R^{d_1}} \Big(\int |y'|^2 \, \mu_x(dy')\Big)^{1/2}  \le C^{(1)}_\beta.
\end{align*}
Thus, 
\begin{align}
    \begin{split} \label{eq:weg9h34t79h8gd}
     |\bar{f}(x) -P_t^{x}f(x,y)| 
     &= \Big| \int f(x,y') \mu_x(dy') -P_t^xf(x,y) \Big|\\
     &= \Big| \int\left( P_t^x f(x,y')-P_t^xf(x,y) \right)\mu_x(dy') \Big| \\
     &\leq \int \hat{\E} [ |f(x, \hat{Y}^{x,y'}_t)-f(x,\hat{Y}^{x,y}_t)|] \, \mu_x(dy')\\
     &\leq L_f \int e^{-\beta t}|y-y'| \mu_x(dy') \\
     &\leq L_f e^{-\beta t} (|y|+C^{(1)}_\beta )
     \end{split}
\end{align}
for $x \in \R^{d_1}$ and $y \in \R^{d_2}$. This proves the well-definedness of $\Phi$ and \eqref{eq:23r8u334gh94h9}.
    
    Let us now prove Lipschitz-continuity of $\bar f$.
    Let $x_1,x_2 \in \R^{d_1}$, $y \in \R^{d_2}$ and $t \geq 0$. Then,
\begin{align} \begin{split}
    \label{eq:23047239hfi3}
    |\bar{f}(x_1)-\bar{f}(x_2)| \leq &| \bar{f}(x_1) -P_t^{x_1}f(x_1,y)| \\
    &+ |P_t^{x_1}f(x_1,y)-P_t^{x_2}f(x_2,y)| + |P_t^{x_2}f(x_2,y)-\bar{f}(x_2)|.
    \end{split}
\end{align}
The first and third term on the right-hand side of the latter inequality can be bounded by \eqref{eq:weg9h34t79h8gd}.
For the second term on the right-hand side of \eqref{eq:23047239hfi3}, by Lemma~\ref{lem:frozenmoment}, there exists a $C^{(2)}_\beta >0$ with
\begin{align*}
     |\hat{Y}^{x_1,y}_t-\hat{Y}^{x_2,y}_t|_{L^1(\hat{\P})} \leq |\hat{Y}^{x_1,y}_t-\hat{Y}^{x_2,y}_t|_{L^2(\hat{\P})} \leq C^{(2)}_\beta |x_1-x_2|,
\end{align*}
so that
\begin{align*}
    |P_t^{x_1}f(x_1,y)-P_t^{x_2}f(x_2,y)| &\leq \hat{\E} [|f(x_1,\hat{Y}^{x_1,y}_t)-f(x_2,\hat{Y}^{x_2,y}_t)|] \\
    &\leq L_f |x_1-x_2|+ L_f \hat{\E} [ |\hat{Y}^{x_1,y}_t-\hat{Y}^{x_2,y}_t| ] \\
    &\leq L_f(1+  C^{(2)}_\beta) |x_1-x_2|.
\end{align*}
Thus, letting $t \to \infty$ implies
\begin{align*}
    |\bar{f}(x_1)-\bar{f}(x_2)| \leq  L_f(1+  C^{(2)}_\beta)|x_1-x_2|.
\end{align*}
\end{proof}

Next, we show that $\Phi$ solves the Poisson equation. Note that in Lemma~\ref{___lem_D2yPhi} and Lemma~\ref{lem:DxPhi} we will show that, under Assumption~\ref{assu:regularity} and $2$-dissipativity of the fast variable, one has $\Phi \in C^{1,2}$ so that 
$$
    \mathcal{L}^1 \Phi(x,y) = \lim_{t\to 0} \frac{P_t^x\Phi(x,y)-\Phi(x,y)}{t}.
$$

\begin{lemma} \label{lem:Poisson} 
Suppose that Assumption~\ref{assu:frozenLipschitz} is satisfied, $f$ is $L_f$-Lipschitz, and the frozen variable is $2$-dissipative. Then, \(\Phi: \R^{d_1+d_2} \to \R^{d_1}\) satisfies
\[
\lim_{t\to 0} \frac{P_t^x\Phi(x,y)-\Phi(x,y)}{t} = f(x,y)-\bar{f}(x),
\]
for $x \in \R^{d_1}$ and $y \in \R^{d_2}$.
\end{lemma}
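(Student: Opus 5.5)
Fix $x\in\R^{d_1}$ and $y\in\R^{d_2}$, and write $h(y') := \bar f(x) - f(x,y')$. Then $P^x_s h(y') = \bar f(x) - P^x_s f(x,y')$, so $\Phi(x,y') = \int_0^\infty P^x_s h(y')\,ds$. The plan is to compute $P^x_t\Phi(x,\cdot)(y)$ through the Markov property, recognize a shifted integral, and differentiate at $t=0$.

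\textbf{Step 1: semigroup identity.} By \eqref{eq:weg9h34t79h8gd} we have $|P^x_s h(y')| \le L_f e^{-\beta s}(|y'| + C^{(1)}_\beta)$. By Lemma~\ref{lem:frozenmoment}~(iii), $\hat\E[|\hat Y^{x,y}_t|] \le |y| + C$. Together these justify applying Fubini to
\[
P^x_t\Phi(x,\cdot)(y) = \hat\E\Big[\int_0^\infty P^x_s h(\hat Y^{x,y}_t)\,ds\Big] = \int_0^\infty P^x_t P^x_s h(y)\,ds = \int_0^\infty P^x_{t+s} h(y)\,ds = \int_t^\infty P^x_r h(y)\,dr.
\]
The middle equality uses the Markov (semigroup) property of the frozen process. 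This requires $P^x_{t+s}h = P^x_t P^x_s h$ for functions of linear growth. I would get this from the strong Markov property of the solution to \eqref{eq:frozen0} with Lipschitz coefficients; integrability is guaranteed by Lemma~\ref{___lem_Y}. Consequently
\[
\frac{P^x_t\Phi(x,y)-\Phi(x,y)}{t} = -\frac{1}{t}\int_0^t P^x_r h(y)\,dr = \frac1t\int_0^t \big(P^x_r f(x,y) - \bar f(x)\big)\,dr.
\]

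\textbf{Step 2: limit $t\to0$.} It remains to show that $r\mapsto P^x_r f(x,y)$ is continuous at $r=0$, with value $f(x,y)$. The Lipschitz continuity of $f$ gives
\[
|P^x_r f(x,y) - f(x,y)| \le L_f\,\hat\E\big[|\hat Y^{x,y}_r - y|\big].
\]
The right-hand side tends to $0$ as $r\to0$ by path continuity of $\hat Y^{x,y}$ and dominated convergence, using the domination $\hat\E[\sup_{r\le 1}|\hat Y^{x,y}_r|^2] < \infty$ from Lemma~\ref{___lem_Y}. The fundamental theorem of calculus for the averaged integral then gives the limit $f(x,y)-\bar f(x)$. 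Note the sign convention: with $\Phi = \int(\bar f - P_t f)$, the generator applied to $\Phi$ indeed yields $f-\bar f$.

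\textbf{Main obstacle.} The main technical point is the justification in Step 1: exchanging the expectation with the improper $ds$-integral, and using the semigroup property for unbounded (linear-growth) functions. The exponential bound in $s$, linear in the starting point, combined with the uniform first-moment bound of Lemma~\ref{lem:frozenmoment}~(iii), makes the double integral absolutely convergent. This makes Fubini and the Markov property (applied first to bounded truncations and then passing to the limit by monotone or dominated convergence) routine.
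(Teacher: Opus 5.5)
Your proposal is correct and follows the same route as the paper. You use the semigroup property to rewrite $P^x_t\Phi(x,\cdot)(y)$ as the tail integral $\int_t^\infty$, which turns the difference quotient into $\frac1t\int_0^t \big(P^x_r f(x,y)-\bar f(x)\big)\,dr$, and then let $t\to0$. You also justify two steps the paper's proof uses without comment: the Fubini exchange, via the exponential bound \eqref{eq:weg9h34t79h8gd} together with the first-moment bound, and the continuity of $r\mapsto P^x_r f(x,y)$ at $r=0$.
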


\begin{proof}
    Let $x \in \R^{d_1}$ and $y \in \R^{d_2}$. By the semigroup property $P_t^xP_s^x =P_{t+s}^x$, we get
\begin{align*}
    \lim_{t\to 0} \frac{P_t^x\Phi(x,y)-\Phi(x,y)}{t}
    &= \lim_{t\to 0} \frac{1}{t}  \left( -\hat{\E} \left[\int_0^\infty P_s^x f(x,\hat{Y}^{x,y}_t)-\bar f(x) ds\right]+ \int_0^\infty P_s^x f(x,y)-\bar f(x) ds\right) \\
    &= \lim_{t\to 0} \frac{1}{t}  \left( - \int_0^\infty P_t^x(P_s^x f(x,\cdot))(y)-\bar f(x)ds+ \int_0^\infty P_s^x f(x,y)-\bar f(x) ds\right) \\
    &= \lim_{t\to 0} \frac{1}{t}  \left( -\int_t^\infty P_s^x f(x,y)-\bar f(x) ds+ \int_0^\infty P_s^x f(x,y)-\bar f(x) ds\right) \\
    &= \frac{d}{dt} \left. \int_0^t P_s^x f(x,y)-\bar f(x) ds \right\vert_{t=0} = f(x,y)-\bar f(x).
\end{align*}
\end{proof}

\subsection{Regularity of the solution to the Poisson equation}
Lastly, we provide regularity results for the corresponding Poisson equation depending on the regularity of the coefficients of the fast SDE. In the proofs, we will make use of the following alternative formulation of the p-dissipativity condition from Definition~\ref{def:frozendissi}, namely that for all $x \in \R^{d_1}$ and $y,v \in \R^{d_2}$ we have
    \begin{align} \label{___dissipativity_equiv} % Will be used quite often
    \begin{split}
        &\langle v,D_y g(x,y) v \rangle + \frac{p-1}{2} \sum_{k=1}^{d_2} |D_y \sigma_k(x,y) v |^2 \\
        &= \lim\limits_{h \to 0} \frac{\langle v, g(x,y+hv)-g(x,y) \rangle}{h} + \lim\limits_{h \to 0} \frac{p-1}{2} \sum_{k=1}^{d_2} \frac{| \sigma_k(x,y+hv) - \sigma_k(x,y) |^2}{h^2} \\
        &= \lim\limits_{h \to 0} \frac{1 }{h^2} \bigg( \langle hv, g(x,y+hv)-g(x,y) \rangle +  \frac{p-1}{2} |\sigma(x,y+hv) - \sigma(x,y)|_F^2 \bigg) \\
        &\leq \limsup\limits_{h \to 0} \frac{1}{h^2} (-\beta) |hv|^2= -\beta |v|^2.
    \end{split}
    \end{align}

\begin{lemma} \label{___lem_DyFlow}
Suppose that Assumption~\ref{assu:frozenLipschitz} holds and that $g$ and $\sigma_k$, $k=1,\dots,d_2$, are twice continuously differentiable in $y \in \R^{d_2}$. Assume, moreover, that $D_y g$, $D_y \sigma_k$, $D^2_y g$, and $D^2_y \sigma_k$ are jointly continuous and uniformly bounded on $\R^{d_1+d_2}$.
    Then the mapping $(x,y) \mapsto \hat{Y}^{x,y}$ is twice differentiable in $y$ in the sense that there are families of processes $D_y \hat{Y}^{x,y} v$ and $D^2_y \hat{Y}^{x,y} (v_1,v_2)$ indexed with $(x,y )\in \R^{d_1+d_2}$ and $v \in\R^{d_2}$ and $v_1,v_2 \in\R^{d_2}$, respectively, satisfying
    \begin{align*}
        \lim\limits_{h \to 0} &\hat{\E} \bigg[ \sup_{t \in [0,T]} \bigg| \frac{\hat{Y}^{x,y+hv}_t - \hat{Y}^{x,y}_t }{h} -D_y \hat{Y}^{x,y}_t v\bigg|^2 \bigg] = 0, \\
        \lim\limits_{h \to 0} &\hat{\E} \bigg[ \sup_{t \in [0,T]} \bigg| \frac{D_y \hat{Y}^{x,y+hv_2}_t v_1 - D_y \hat{Y}^{x,y}_t v_1}{h} -D^2_y \hat{Y}^{x,y}_t (v_1,v_2)\bigg|^2 \bigg] = 0
    \end{align*}
    for all $x \in \R^{d_1}$, $y,v,v_1,v_2 \in \R^{d_2}$, and any finite $T >0$ where the first-order derivative flow $D_y \hat{Y}^{x,y} v$ is the unique strong solution to
    \begin{align*}
    \begin{cases}
        d\zeta^1_t = D_yg(x,\hat{Y}^{x,y}_t)\zeta^1_t  dt + \sum_{k=1}^{d_2} D_y\sigma_k(x,\hat{Y}^{x,y}_t) \zeta^1_t d\hat{W}^k_t, \\
        \zeta^1_0 = v
    \end{cases}
    \end{align*}
    and the second-order derivative flow $D^2_y \hat{Y}^{x,y} (v_1,v_2) = D^2_y \hat{Y}^{x,y} (v_2,v_1)$ is the unique strong solution to
    \begin{align*}
    \begin{cases}
        d\zeta^2_t = \Big(D_y g(x,\hat{Y}^{x,y}_t) \zeta^2_t + D_y^2 g(x,\hat{Y}^{x,y}_t) (D_y\hat{Y}^{x,y}_t v_1,D_y\hat{Y}^{x,y}_t v_2) \Big) dt
        \\
        \qquad \qquad +\sum_{k=1}^{d_2} \Big( D_y\sigma_k(x,\hat{Y}^{x,y}_t)\zeta^2_t + D_y^2 \sigma_k(x,\hat{Y}^{x,y}_t) (D_y\hat{Y}^{x,y}_t v_1,D_y\hat{Y}^{x,y}_t v_2)\Big) d\hat{W}^k_t,
        \\
        \zeta^2_0 =0.
    \end{cases}
    \end{align*}
    Moreover, for each $p\geq 2$ and finite time horizon $T > 0$, both processes are uniformly $L^p$-bounded in $x,y$, i.e. there exists $C_{p,T} > 0$ with
    \begin{align*}
        \hat\E \Big[ \sup_{0\leq t \leq T}|D_y \hat{Y}^{x,y}_t v|^p \Big] &\leq C_{p,T}|v|^p
        \\
        \hat\E \Big[ \sup_{0\leq t \leq T}|D^2_y \hat{Y}^{x,y}_t (v_1,v_2)|^p\Big] &\leq C_{p,T} |v_1|^p|v_2|^p
    \end{align*}
    for $ x \in \R^{d_1}$, $y,v,v_1,v_2 \in \R^{d_2}$, and they are continuous in $x,y$ with respect to the $L^p$-norm, i.e.
    \begin{align*}
        \lim\limits_{(x',y') \to (x,y)} &\hat{\E} \Big[ \sup_{0 \leq t \leq T} | D_y \hat{Y}^{x',y'}_t v - D_y \hat{Y}^{x,y}_t v|^p \Big] = 0, \\
        \lim\limits_{(x',y') \to (x,y)} &\hat{\E} \Big[ \sup_{0 \leq t \leq T} | D^2_y \hat{Y}^{x',y'}_t (v_1,v_2) - D^2_y \hat{Y}^{x,y}_t (v_1,v_2)|^p \Big] = 0
    \end{align*}
    for $ x\in \R^{d_1}$, $y,v,v_1,v_2 \in \R^{d_2}$. Additionally, if $\hat Y^x$ is $p$-dissipative for some $p \geq 2$ with constant $\beta> 0$, then
    \begin{align*}
        \|D_y \hat{Y}^{x,y}_t v\|_{L^p} \leq e^{-\beta  t}|v|.
    \end{align*}
    for all $x \in \R^{d_1}$, $y,v \in \R^{d_2}$, and $t \geq 0$. 
\end{lemma}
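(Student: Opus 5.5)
We follow the classical derivative-flow strategy (Kunita; Da Prato--Zabczyk), applied to the frozen SDE with $x$ fixed throughout. The plan has four stages: define the candidate processes, prove mean-square differentiability and the moment bounds, prove continuity in $(x,y)$, and finally use dissipativity for the exponential estimate.

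\emph{Candidate processes.} The equation for $\zeta^1$ is linear in $\zeta^1$. Its coefficients $D_yg(x,\hat Y^{x,y}_t)$ and $D_y\sigma_k(x,\hat Y^{x,y}_t)$ are progressively measurable and uniformly bounded, so standard theory gives a unique strong solution. By BDG, the Lipschitz bound and Gr\"onwall (as in Lemma~\ref{___lem_Y}) we get $\hat\E[\sup_{t\le T}|\zeta^1_t|^p]\le C_{p,T}|v|^p$, uniformly in $(x,y)$ since all derivatives are uniformly bounded. The equation for $\zeta^2$ is linear with the same bounded coefficients plus the inhomogeneity $D^2_yg(\cdot)(\zeta^{1,v_1},\zeta^{1,v_2})$ (and its $\sigma$-analogue). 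This forcing is controlled in $L^p$ by H\"older's inequality and the $L^{2p}$-bound on $\zeta^1$, which gives the bound $C_{p,T}|v_1|^p|v_2|^p$. Symmetry in $(v_1,v_2)$ follows from uniqueness, because the forcing is symmetric.

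\emph{Differentiability.} Set $\Delta^h_t=(\hat Y^{x,y+hv}_t-\hat Y^{x,y}_t)/h-\zeta^1_t$. By the mean value theorem,
\[g(x,\hat Y^{x,y+hv}_t)-g(x,\hat Y^{x,y}_t)=\int_0^1 D_yg\bigl(x,\hat Y^{x,y}_t+\theta(\hat Y^{x,y+hv}_t-\hat Y^{x,y}_t)\bigr)\,d\theta\,(\hat Y^{x,y+hv}_t-\hat Y^{x,y}_t),\]
and similarly for $\sigma_k$. Hence $\Delta^h$ solves a linear SDE with bounded coefficients and a remainder of the form $\bigl(\int_0^1 D_yg(\dots)\,d\theta-D_yg(x,\hat Y^{x,y}_t)\bigr)\zeta^1_t$. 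BDG and Gr\"onwall bound $\hat\E\sup|\Delta^h|^2$ by $C\,\hat\E\int_0^T|\text{remainder}|^2$. This tends to zero by dominated convergence, using boundedness and continuity of $D_yg$, $D_y\sigma_k$, the $L^p$-continuity of Lemma~\ref{___lem_Y}, and the moment bounds on $\zeta^1$. The second-order statement is the same argument applied to the $\zeta^1$-equation. There we also use that $D_y^2g$ and $D_y^2\sigma_k$ are bounded and continuous, and that $D_y\hat Y^{x,y+hv_2}v_1\to D_y\hat Y^{x,y}v_1$ in $L^p$.

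\emph{Continuity in $(x,y)$.} We proceed identically. The difference $D_y\hat Y^{x',y'}v-D_y\hat Y^{x,y}v$ solves a linear equation whose remainder contains $\bigl(D_yg(x',\hat Y^{x',y'}_t)-D_yg(x,\hat Y^{x,y}_t)\bigr)\zeta^{1}_t$. This remainder vanishes in $L^p$ by joint continuity, Lemma~\ref{___lem_Y}, convergence in probability plus uniform integrability, and H\"older's inequality. The same holds for the second-order flow. The main technical nuisance here is the bookkeeping of these products, not any real difficulty.

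\emph{Exponential bound.} Apply It\^o's formula to $|\zeta^1_t|^p$ as in Lemma~\ref{lem:frozenmoment}(i); the martingale part is a true martingale by the moment bounds. Using $|A^\top w|\le|A|\,|w|$ we obtain
\[\frac{d}{dt}\hat\E|\zeta^1_t|^p\le p\,\hat\E\Bigl[|\zeta^1_t|^{p-2}\Bigl(\langle\zeta^1_t,D_yg\,\zeta^1_t\rangle+\tfrac{p-1}{2}\sum_k|D_y\sigma_k\,\zeta^1_t|^2\Bigr)\Bigr]\le-p\beta\,\hat\E|\zeta^1_t|^p,\]
where the last step is the infinitesimal dissipativity \eqref{___dissipativity_equiv}. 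Gr\"onwall then gives $\|D_y\hat Y^{x,y}_tv\|_{L^p}\le e^{-\beta t}|v|$.

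As an alternative for this last bound, one could pass to the limit $h\to0$ in Lemma~\ref{lem:frozenmoment}(i) using Fatou's lemma and the convergence in probability established above.
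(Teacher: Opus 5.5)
Your proposal is correct. It differs from the paper's proof mainly in how the differentiability is obtained.

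\textbf{Same as the paper.} The moment bounds (localized BDG--Gr\"onwall), the continuity in $(x,y)$ (convergence in probability plus uniform integrability) and the exponential estimate (It\^o's formula for $|\zeta^1_t|^p$ together with \eqref{___dissipativity_equiv}) essentially match the paper. The only small deviation is that you work directly in $L^p$ for continuity, whereas the paper proves $L^2$-continuity and then interpolates with higher moments as in Lemma~\ref{___lem_Y}.

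\textbf{Different from the paper.} The paper does not prove differentiability at all. It imports it from a general Fr\'echet-differentiability theorem for SDE solution maps (Theorem~6.5 in~\cite{marinelli2020frechet}), which gives both orders at once and identifies the derivatives with the variational equations. You instead prove it by hand: the mean value theorem, a linear SDE for the error $\Delta^h$, and BDG--Gr\"onwall with a dominated-convergence remainder.

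\textbf{What your route buys, and what it costs.} Your argument is self-contained and makes visible exactly which hypotheses are used. The cost appears in the second-order step. The augmented coefficient $(y,z)\mapsto D_yg(x,y)z$ is not Lipschitz uniformly in $z$, so ``the same argument'' genuinely needs two things you only gesture at:
\begin{itemize}
\item uniform higher moments of the quotients $(\hat Y^{x,y+hv_2}_t-\hat Y^{x,y}_t)/h$; these follow from the same linear-SDE bound, since the quotients solve an equation with the bounded coefficients $\int_0^1 D_yg(\dots)\,d\theta$;
\item $L^4$-convergence of these quotients to $D_y\hat Y^{x,y}_tv_2$, obtained by interpolation.
\end{itemize}
These are what control the remainder $\frac1h\bigl(D_yg(x,\hat Y^{x,y+hv_2}_t)-D_yg(x,\hat Y^{x,y}_t)\bigr)\zeta^1_t-D_y^2g(x,\hat Y^{x,y}_t)(\zeta^1_t,D_y\hat Y^{x,y}_tv_2)$.

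\textbf{Your alternative for the exponential bound.} Obtaining it by Fatou's lemma from Lemma~\ref{lem:frozenmoment}(i), along an a.s.\ convergent subsequence of difference quotients, is also valid. It is shorter than the paper's It\^o computation, since it needs neither the true-martingale check nor \eqref{___dissipativity_equiv}.
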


\begin{proof} The differentiability assertions follow from a standard differentiability theorem for solution maps of SDEs; see e.g. Theorem~6.5 in~\cite{marinelli2020frechet}. It remains to prove the moment bounds, continuous dependence on $(x,y)$, and the dissipativity estimates.

    The proof of the uniform $L^p$-boundedness and the continuity in $(x,y) \in \R^{d_1+d_2}$ follows the same line of argument as in Lemma~\ref{___lem_Y} for both processes. We define the constants $C^{(1)}_p , C^{(1)}_2>0$ by the BDG inequality as in \eqref{___Y_BDG_p} and let $C^{(2)} > 0$ be the bound for the derivatives. 
    First, let $\zeta := \zeta^{x,y} := \zeta^{x,y,v} := D_y \hat Y^{x,y} v$ for $x \in \R^{d_1}$, $y,v \in \R^{d_2}$ and consider the localizing sequence
    \begin{align*}
        \tau_N := \tau_N^{x,y,v} := \inf\{ t \geq 0: |\zeta_t | \geq N\}, \quad N\in\N.
    \end{align*}
    Fix $T \geq 0$ and $p \geq 2$. By the BDG and the Hölder inequalities, there exists a constant $C^{(3)}_{p,T} > 0$ such that
    \begin{align*}
        \hat \E \Big[\sup_{0 \leq s \leq t \wedge \tau_N} |\zeta_s|^p \Big] 
        &\leq 
        (d_2+2)^{p-1} \bigg(|v|^p + \hat \E \bigg[ \bigg( \int_0^{t \wedge \tau_N} |D_y g(x, \hat Y^{x,y}_s) \zeta_s | ds \bigg)^p \bigg] \\ &\qquad + C^{(1)}_p \sum_{k=1}^{d_2} \hat \E \bigg[ \bigg(\int_0^{t \wedge \tau_N} |D_y \sigma_k (x, \hat Y^{x,y}_s) \zeta_s|^2 ds \bigg)^{p/2} \bigg]\bigg)
        \\
        &\leq (d_2+2)^{p-1} \bigg( |v|^p + T^{p-1} (C^{(2)})^p \int_0^{t} \hat \E \Big[ \sup_{0 \leq r \leq s \wedge \tau_N} |\zeta_r|^p \Big] ds \\
        &\qquad + d_2 T^{p/2-1} C^{(1)}_p (C^{(2)})^p \int_0^t \hat \E \Big[ \sup_{0 \leq r \leq s \wedge \tau_N} |\zeta_r|^p \Big] ds \bigg)
        \\
        &\leq C^{(3)}_{p,T} |v|^p + C^{(3)}_{p,T} \int_0^t \hat \E \Big[ \sup_{0 \leq r \leq s \wedge \tau_N} |\zeta_r|^p \Big] ds
    \end{align*} 
    holds for all $0 \leq t \leq T$ and $x\in\R^{d_1}$, $y,v \in \R^{d_2}$ and a constant $C^{(3)}_{p,T} >0$. By Fatou's and Grönwall's lemma
    \begin{align*}
        \hat \E \Big[\sup_{0 \leq t \leq T} |\zeta_t|^p \Big] \leq \liminf_{N \to \infty} \hat \E \Big[\sup_{0 \leq t \leq T \wedge \tau_N} |\zeta_t|^p \Big] \leq  C^{(3)}_{p,T} e^{C^{(3)}_{p,T} T} |v|^p
    \end{align*}
    for all $x \in \R^{d_1}$, $y,v\in\R^{d_2}$. 

    Again, we only show the continuity with respect to $L^2$-convergence. The general case follows by the argument as in Lemma~\ref{___lem_Y}. First, one can find a constant $C^{(4)}_T >0$ with
    \begin{align*}
        \hat \E \Big[ \sup_{0 \leq s \leq t} |\zeta^{x',y'}_s - \zeta^{x,y}_s|^2 \Big] 
        &\leq 
        (d_2+1) \bigg( \hat \E \bigg[ \bigg( \int_0^t |D_y g(x', \hat Y^{x',y'}_s) \zeta^{x',y'}_s - D_y g(x, \hat Y^{x,y}_s) \zeta^{x,y}_s| ds \bigg)^2 \bigg]
        \\
        &\quad + C^{(1)}_2 \sum_{k=1}^{d_2} \hat \E \bigg[ \int_0^t |D_y \sigma_k (x', \hat Y^{x',y'}_s) \zeta^{x',y'}_s - D_y \sigma_k (x, \hat Y^{x,y}_s) \zeta^{x,y}_s |^2 ds \bigg] \bigg)
        \\
        &\leq
        2 (C^{(2)})^2 (d_2+1) T \int_0^t \hat \E \Big[\sup_{0 \leq r \leq s}|\zeta^{x',y'}_r - \zeta^{x,y}_r|^2 \Big] ds \\
        &\quad + 2(d_2+1) T \int_0^T \hat \E [|(D_y g(x', \hat Y^{x',y'}_s) - D_y g(x, \hat Y^{x,y}_s)) \zeta^{x,y}_s|^2] ds \\
        &\quad + 2(C^{(2)})^2 C^{(1)}_2 (d_2+1)d_2  \int_0^t \hat \E \Big[\sup_{0 \leq r \leq s}|\zeta^{x',y'}_r - \zeta^{x,y}_r|^2 \Big] ds \\
        &\quad + 2C^{(1)}_2(d_2+1) \sum_{k=1}^{d_2} \int_0^T \hat \E [|(D_y \sigma_k(x', \hat Y^{x',y'}_s) - D_y \sigma_k(x, \hat Y^{x,y}_s)) \zeta^{x,y}_s|^2] ds
        \\
        &\leq C^{(4)}_T \int_0^{T} \hat \E[Z^{x',y',x,y}_s] ds + C^{(4)}_T  \int_0^t \hat \E \Big[\sup_{0 \leq r \leq s}|\zeta^{x',y'}_r - \zeta^{x,y}_r|^2 \Big] ds
    \end{align*}
    for all $0\leq t \leq T$ and $x',x \in \R^{d_1}$, $y',y,v\in\R^{d_2}$ such that Grönwall's inequality gives
    \begin{align*}
        \hat \E \Big[ \sup_{0 \leq t \leq T} |\zeta^{x',y'}_t - \zeta^{x,y}_t|^2 \Big] \leq C^{(4)}_T e^{C^{(4)}_T T} \int_0^T \hat \E[Z^{x',y',x,y}_s] ds 
    \end{align*}
    where we set
    \begin{align*}
        Z^{x',y',x,y}_t 
        = |(D_y g(x', \hat Y^{x',y'}_t) - D_y g(x, \hat Y^{x,y}_t)) \zeta^{x,y}_t|^2 +  \sum_{k=1}^{d_2} |(D_y \sigma_k(x', \hat Y^{x',y'}_t) - D_y \sigma_k(x, \hat Y^{x,y}_t)) \zeta^{x,y}_t|^2
    \end{align*}
    for all $0\leq t \leq T$.
    From the uniform $L^4$-boundedness of $(\zeta^{x,y}_t)_{t \in [0,T]}$ in $(x,y)\in\R^{d_1+d_2}$, the uniform $L^2$-boundedness of $(Z^{x',y',x,y}_t)_{t \in [0,T]}$ in $(x',y'),(x,y)\in\R^{d_1+d_2}$ for each $v\in\R^{d_2}$ follows. Hence, by Fatou's lemma and Vitali's convergence theorem,
    \begin{align*}
        \limsup_{(x',y')\to (x,y)} \hat \E \Big[ \sup_{0 \leq t \leq T} |\zeta^{x',y'}_t - \zeta^{x,y}_t|^2 \Big] \leq C^{(4)}_T e^{C^{(4)}_T T} \int_0^T \limsup_{(x',y')\to (x,y)} \hat \E[ Z^{x',y',x,y}_s] ds = 0
    \end{align*}
    for all $x\in \R^{d_1}$, $y,v\in\R^{d_2}$.
    
    The argument for the second-order derivative $\zeta : = \zeta^{x,y} : =\zeta^{x,y,v_1,v_2}  := D^2_y \hat Y^{x,y} (v_1,v_2)$, $x\in \R^{d_1}$, $y,v_1,v_2\in\R^{d_2}$ is analogous. Abbreviate
    \begin{align*}
        \widetilde \zeta := \widetilde \zeta^{x,y} := (\widetilde \zeta^{x,y,v_1}, \widetilde \zeta^{x,y,v_2}) := (D_y \hat Y^{x,y} v_1,D_y \hat Y^{x,y} v_2)
    \end{align*}
    and define the localizing sequence
    \begin{align*}
        \tau_N := \tau_N^{x,y,v_1,v_2} := \inf\{ t \geq 0 : |\zeta_t| \geq N\}, \quad N\in\N.
    \end{align*}
    for $x \in \R^{d_1}$ and $y,v_1,v_2 \in \R^{d_2}$. Then as before, a constant $C^{(5)}_{p,T} > 0$ exists with
    \begin{align*}
        \hat \E \Big[\sup_{0 \leq s \leq t \wedge \tau_N} |\zeta_s|^p \Big] 
        &\leq (2d_2 + 2)^{p-1} \bigg(\hat \E \bigg[  \bigg(\int_0^{t \wedge \tau_N} | D_y g(x, \hat Y^{x,y}_s) \zeta_s | ds \bigg)^p \bigg]  
        \\&\qquad + \hat \E \bigg[ \bigg( \int_0^{t \wedge \tau_N} |D^2_y g(x, \hat Y^{x,y}_s) (\widetilde \zeta_s)| ds \bigg)^p \bigg] \\ 
        &\qquad + C^{(1)}_p \sum_{k=1}^{d_2} \hat \E \bigg[ \bigg(\int_0^{t \wedge \tau_N} |D_y \sigma_k(x, \hat Y^{x,y}_s) \zeta_s|^2 ds \bigg)^{p/2} \bigg] 
        \\ &\qquad + C^{(1)}_p \sum_{k=1}^{d_2} \hat \E \bigg[ \bigg(\int_0^{t \wedge \tau_N} |D^2_y \sigma_k(x, \hat Y^{x,y}_s) (\widetilde \zeta_s)|^2 ds \bigg)^{p/2} \bigg] \bigg) 
        \\
        &\leq C^{(5)}_{p,T} \int_0^{t} \hat \E \Big[ \sup_{0 \leq r \leq s \wedge \tau_N} |\zeta_r|^p \Big] ds + C^{(5)}_{p,T} \sup_{0 \leq s \leq T} \hat \E [  |\widetilde \zeta^{x,y,v_1}_s|^p |\widetilde \zeta^{x,y,v_2}_s|^p ].
    \end{align*}
    for all $0 \leq t \leq T$. By the previously shown $L^p$-boundedness of $D_y \hat Y^{x,y}v_i$, $i=1,2$ and Hölder's inequality, we have
    \begin{align*}
        \sup_{0 \leq s \leq T} \hat \E [  |\widetilde \zeta^{x,y,v_1}_s|^p |\widetilde \zeta^{x,y,v_2}_s|^p ]
        \leq \sup_{0 \leq s \leq T} (\hat \E [  |\widetilde \zeta^{x,y,v_1}_s|^{2p}] \hat \E [ |\widetilde \zeta^{x,y,v_2}_s|^{2p} ])^{1/2} \leq C^{(6)}_{p,T}|v_1|^p|v_2|^p
    \end{align*}
    for some $C^{(6)}_{p,T}> 0$ and by Fatou's lemma and Grönwall's inequality
    \begin{align*}
        \hat \E \Big[\sup_{0 \leq t \leq T } |\zeta_t|^p \Big] \leq \liminf_{N \to \infty} \hat \E \Big[\sup_{0 \leq t \leq T \wedge \tau_N} |\zeta_t|^p \Big] \leq C^{(5)}_{p,T}C^{(6)}_{p,T} e^{C^{(5)}_{p,T} T}|v_1|^p|v_2|^p.
    \end{align*}
    for all $x \in \R^{d_1}$, $y,v_1,v_2 \in \R^{d_2}$.

    For the continuity of $D_y^2 \hat Y^{x,y}_t (v_1,v_2)$ with respect to $(x,y)\in\R^{d_1+d_2}$ note that there is a constant $C^{(7)}_{T} > 0$ such that for each $x',x \in \R^{d_1}$, $y',y,v_1,v_2 \in \R^{d_2}$, we have
    \begin{align*}
        \hat \E \Big[\sup_{0\leq s \leq t} |\zeta^{x',y'}_s - \zeta^{x,y}_s|^2 \Big]
        &\leq (2d_2+2) \bigg( \hat \E \bigg[ \bigg(\int_0^t |D_y g(x',\hat Y^{x',y'}_s)\zeta^{x',y'}_s - D_yg(x,\hat Y^{x,y}_s) \zeta^{x,y}_s | ds \bigg)^2 \bigg] \\
        &\quad +  \hat\E \bigg[ \bigg(\int_0^t |D^2_y g(x',\hat Y^{x',y'}_s)(\widetilde\zeta^{x',y'}_s) - D^2_y g(x,\hat Y^{x,y}_s)(\widetilde \zeta^{x,y}_s)| ds \bigg)^2 \bigg] \\
        &\quad + C^{(1)}_2 \sum_{k=1}^{d_2} \hat \E \bigg[ \int_0^t | D_y \sigma_k (x',\hat Y^{x',y'}_s)\zeta^{x',y'}_s - D_y \sigma_k(x,\hat Y^{x,y}_s) \zeta^{x,y}_s |^2 ds \bigg] \\
        &\quad +C^{(1)}_2 \sum_{k=1}^{d_2} \hat \E \bigg[  \int_0^t | D^2_y \sigma_k (x',\hat Y^{x',y'}_s)(\widetilde \zeta^{x',y'}_s) - D^2_y \sigma_k(x,\hat Y^{x,y}_s) (\widetilde \zeta^{x,y}_s) |^2 ds \bigg] \bigg) \\
        &\leq C^{(7)}_T  \int_0^t \hat \E \Big[\sup_{0\leq r \leq s} |\zeta^{x',y'}_r - \zeta^{x,y}_r|^2 \Big] ds + C^{(7)}_T \int_0^T \hat \E [ Z^{x',y',x,y}_s ] ds
    \end{align*}
    for all $0 \leq t \leq T$ and by Grönwall's inequality
    \begin{align*}
        \hat \E \Big[\sup_{0\leq t \leq T} |\zeta^{x',y'}_t - \zeta^{x,y}_t|^2  \Big] \leq C^{(7)}_T e^{C^{(7)}_T T} \int_0^T \hat\E [ Z^{x',y',x,y}_s] ds
    \end{align*}
    where
    \begin{align*}
        Z^{x',y',x,y}_t 
        &= 
        |( D_y g(x',\hat Y^{x',y'}_t) - D_yg(x,\hat Y^{x,y}_t) ) \zeta^{x,y}_t |^2 \\
        &\qquad+ |D^2_y g(x',\hat Y^{x',y'}_t)(\widetilde\zeta^{x',y'}_t) - D^2_y g(x,\hat Y^{x,y}_t)(\widetilde \zeta^{x,y}_t)|^2
        \\
        &\qquad + \sum_{k=1}^{d_2} |( D_y \sigma_k(x',\hat Y^{x',y'}_t) - D_y\sigma_k(x,\hat Y^{x,y}_t) ) \zeta^{x,y}_t |^2\\
        &\qquad + \sum_{k=1}^{d_2} |D^2_y \sigma_k (x',\hat Y^{x',y'}_t)(\widetilde \zeta^{x',y'}_t) - D^2_y \sigma_k(x,\hat Y^{x,y}_t) (\widetilde \zeta^{x,y}_t) |^2
    \end{align*}
    for all $0 \leq t \leq T$.
    Now, since we have already shown that for each $v_1,v_2 \in \R^{d_2}$, the processes $(\hat Y^{x,y}_t)_{t \in [0,T]}$, $(\widetilde \zeta_t^{x,y,v_i})_{t \in [0,T]}$, $i = 1,2$, and $(\zeta^{x,y}_t)_{t \in [0,T]}$ are uniformly (or at least locally) $L^8$-bounded and continuous with respect to convergence in probability in $(x,y) \in \R^{d_1+d_2}$, also $Z^{x',y',x,y}$ is uniformly $L^2$-bounded and thus converges in $L^1$ by the convergence theorem of Vitali. Finally, by Fatou's lemma
    \begin{align*}
        \limsup_{(x',y')\to(x,y)}\hat \E \Big[\sup_{0\leq t \leq T} |\zeta^{x',y'}_t - \zeta^{x,y}_t|^2 \Big] \leq C^{(7)}_{T} e^{C^{(7)}_{T} T} \int_0^T \limsup_{(x',y')\to(x,y)} \hat\E [Z^{x',y',x,y}_s] ds =0
    \end{align*}
    for all $x\in\R^{d_1}$, $y,v_1,v_2 \in \R^{d_2}$.

    For the exponential convergence, assume $p$-dissipativity for $p \geq 2$ and fix $x\in\R^{d_1}$ and $y,v\in\R^{d_2}$. Then applying the Itô formula to $\zeta = D_y \hat Y^{x,y} v$ under expectation gives
    \begin{align*}
        \hat \E [|\zeta_t |^p] 
        &=|v|^p + p \int_0^t \hat{\E} [|\zeta_s |^{p-2} \langle \zeta_s,D_yg(x,\hat{Y}^{x,y}_s)\zeta_s  \rangle ] ds\\
        &\qquad + \frac{p}{2} \sum_{k=1}^{d_2} \int_0^t \hat{\E} [ |\zeta_s |^{p-2} |D_y\sigma_k(x,\hat{Y}^{x,y}_s)\zeta_s|^2 ] ds \\
        &\qquad + \frac{p(p-2)}{2} \sum_{k=1}^{d_2} \int_0^t \hat{\E} [ |\zeta_s |^{p-4}  \langle \zeta_s,D_y\sigma_k(x,\hat{Y}^{x,y}_s)\zeta_s\rangle^2] ds \\
        &\qquad + p\sum_{k=1}^{d_2} \hat\E \bigg[ \int_0^t |\zeta_s |^{p-2} \langle \zeta_s,D_y \sigma_k(x,\hat{Y}^{x,y}_s)\zeta_s  \rangle d \hat W^k_s \bigg]
    \end{align*}
    for $t \geq 0$ where the Brownian integral inside the last term is a true martingale by the previously established $L^{2p}$-boundedness and thus has expectation $0$.
    By \eqref{___dissipativity_equiv}, this expression turns into
    \begin{align*}
        \frac{d}{dt}\hat{\E}[|\zeta_t |^p] &\leq p\hat{\E}\Big[|\zeta_t|^{p-2} \Big( \langle \zeta_t,D_yg(x,\hat{Y}^{x,y}_t)\zeta_t  \rangle + \frac{p-1}{2}\sum_{k=1}^{d_2} |D_y\sigma_k(x,\hat{Y}^{x,y}_t)\zeta_t|^2 \Big)\Big]
        \leq -p\beta \hat{\E}[|\zeta_t |^p]
    \end{align*}
    for all $t \geq 0$. Then Grönwall's differential inequality yields the desired inequality
    \begin{align*}
        \hat{\E}[|\zeta_t |^p] \leq e^{-p\beta t} |v|^p 
    \end{align*}
    for all $t \geq 0$.
\end{proof}

\begin{lemma}\label{___lem_DxFlow}
    Suppose that Assumption~\ref{assu:frozenLipschitz} holds, that $g$ and $\sigma_k$, $k=1,\dots,d_2$, are continuously differentiable in $x \in \R^{d_1}$ as well as in $y\in \R^{d_2}$, and that the mixed second-order derivatives $D^2_{y,x} g$ and $D^2_{y,x} \sigma_k$ exist. Assume moreover that $D_x g$, $D_x \sigma_k$, $D_y g$, $D_y \sigma_k$, $D^2_{y,x} g$, and $D^2_{y,x} \sigma_k$ are jointly continuous and uniformly bounded on $\R^{d_1+d_2}$.
    Then the mapping $(x,y) \mapsto \hat{Y}^{x,y}$ is differentiable in $x$ in the sense that there is a family of processes $D_x \hat Y^{x,y} v$ indexed with $x,v\in \R^{d_1}$, $y\in \R^{d_2}$ satisfying
    \begin{align} \label{eq:Dxconv}
        \lim\limits_{h \to 0} &\hat{\E} \bigg[ \sup_{t \in [0,T]} \bigg| \frac{\hat{Y}^{x + hv,y}_t - \hat{Y}^{x,y}_t }{h} -D_x \hat{Y}^{x,y}_t v\bigg|^2 \bigg] = 0
    \end{align}
    for any finite $T >0$ where the first-order derivative flow $D_x \hat{Y}^{x,y} v$ is an instance of the general dynamics
    \begin{align} \label{eq:DxSDE}
        \begin{cases}
        d \zeta^{x,y,z,v}_t = D_x g(x,\hat{Y}^{x,y}_t) v dt + D_y g(x,\hat{Y}^{x,y}_t) \zeta^{x,y,z,v}_t dt \\ \qquad \qquad \qquad + \sum_{k=1}^{d_2} \Big( D_x \sigma_k(x,\hat{Y}^{x,y}_t) v + D_y \sigma_k(x,\hat{Y}^{x,y}_t) \zeta^{x,y,z,v}_t \Big) d \hat{W}^k_t, \\
        \zeta^{x,y,z,v}_0 = z \in \R^{d_2}
        \end{cases}
    \end{align}
    for $z=0$, i.e. $D_x \hat{Y}^{x,y} v = \zeta^{x,y,0,v}$. Moreover for each $p \geq 2$ and finite time horizon $T > 0$, the process is uniformly bounded in $x,y$, i.e. there exists $C_{p,T}> 0$ with
    \begin{align*}
        \hat\E \Big[ \sup_{0\leq t \leq T}|\zeta^{x,y,z,v}_t|^p \Big] &\leq C_{p,T}( |v|^p + |z|^p )
    \end{align*}
    for $x,v \in \R^{d_1}$, $y,z \in \R^{d_2}$ and continuous in $x,y$ with respect to convergence in $L^p$, i.e.
    \begin{align*}
        \lim\limits_{(x',y') \to (x,y)} &\hat{\E} \Big[ \sup_{t \in [0,T]} | \zeta^{x',y',z,v}_t - \zeta^{x,y,z,v}_t|^p \Big] = 0.
    \end{align*}
    for $x,v \in \R^{d_1}$, $y,z \in \R^{d_2}$. Additionally, if $\hat Y^{x}$ is $p$-dissipative for $p \geq 2$ with constant $\beta > 0$, then there exists $ C_{\beta,p}^{(1)},C_{\beta,p}^{(2)} > 0$ with
    \begin{align*}
        \|\zeta^{x,y,z,v}_t\|_{L^p} \leq \{ e^{-(p\beta /2)t}|z|^p + C_{\beta,p}^{(1)}|v|^p \}^{1/p} \leq e^{-(\beta  /2)t} |z| + C_{\beta,p}^{(2)}|v|
    \end{align*}
    for all $x,v \in \R^{d_1}$, $y,z \in \R^{d_2}$, and $t \geq 0$.
\end{lemma}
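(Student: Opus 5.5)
The proof proceeds in parallel with Lemma~\ref{___lem_DyFlow}. For the differentiability assertion \eqref{eq:Dxconv}, we regard $x$ as an additional (constant) component of the state. Concretely, we consider the augmented SDE for $(\hat X_t,\hat Y_t)=(x,\hat Y^{x,y}_t)$ with $d\hat X_t=0$. Its coefficients are $C^1$ in the full state with bounded, jointly continuous derivatives, by the assumptions on $D_xg,D_yg,D_x\sigma_k,D_y\sigma_k$. The standard differentiability theorem for SDE solution maps (Theorem~6.5 in \cite{marinelli2020frechet}) then yields $L^2$-differentiability of $(x,y)\mapsto \hat Y^{x,y}$ uniformly on $[0,T]$. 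Formally differentiating the equation in direction $v$ shows that the derivative solves \eqref{eq:DxSDE} with $z=0$. This is a linear SDE with bounded coefficients and an additive forcing term $D_xg\,v\,dt+\sum_k D_x\sigma_k v\,d\hat W^k$, so it has a unique strong solution.

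For the uniform maximal $L^p$-bound we copy the localization argument of Lemma~\ref{___lem_Y}. We stop at $\tau_N=\inf\{t:|\zeta_t|\ge N\}$ and split $\zeta$ into the initial value $z$, the drift integral and the stochastic integral. The BDG inequality \eqref{___Y_BDG_p} together with Hölder's inequality and the uniform bounds on $D_xg$, $D_yg$, $D_x\sigma_k$, $D_y\sigma_k$ give
\[
\hat\E\Big[\sup_{s\le t\wedge\tau_N}|\zeta_s|^p\Big]\le C(|z|^p+|v|^p)+C\int_0^t\hat\E\Big[\sup_{r\le s\wedge\tau_N}|\zeta_r|^p\Big]ds .
\]
Grönwall's lemma and Fatou's lemma then give the bound $C_{p,T}(|v|^p+|z|^p)$.

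Continuity in $(x,y)$ is handled exactly as in Lemma~\ref{___lem_DyFlow}. We first work in $L^2$. Subtracting the two equations and applying BDG and Grönwall bounds $\hat\E[\sup_{t\le T}|\zeta^{x',y'}_t-\zeta^{x,y}_t|^2]$ by $C_T\int_0^T\hat\E[Z_s]\,ds$. Here $Z_s$ collects the terms $|(D_xg(x',\hat Y^{x',y'}_s)-D_xg(x,\hat Y^{x,y}_s))v|^2$ and $|(D_yg(x',\hat Y^{x',y'}_s)-D_yg(x,\hat Y^{x,y}_s))\zeta^{x,y}_s|^2$, together with their $\sigma_k$-analogues. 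By Lemma~\ref{___lem_Y}, $\hat Y^{x',y'}\to\hat Y^{x,y}$ in probability. Since the derivatives are bounded and jointly continuous, and $\zeta^{x,y}$ is bounded in $L^4$, Vitali's theorem gives $\hat\E[Z_s]\to 0$. The extension to $L^p$ uses the same interpolation between $L^2$ and $L^{2p}$ as at the end of Lemma~\ref{___lem_Y}.

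The main step is the time-uniform estimate under $p$-dissipativity, and we model it on part (ii) of Lemma~\ref{lem:frozenmoment}. Applying Itô's formula to $|\zeta_t|^p$ and taking expectations removes the martingale term, which is a true martingale by the $L^{2p}$-bounds above. We expand $|D_y\sigma_k\zeta+D_x\sigma_kv|^2$ and group the terms quadratic in $\zeta$, which by \eqref{___dissipativity_equiv} contribute at most $-p\beta\hat\E|\zeta_t|^p$. The remaining cross terms are bounded by $C|v|\,\hat\E|\zeta_t|^{p-1}+C|v|^2\,\hat\E|\zeta_t|^{p-2}$, using the uniform bounds on $D_xg$, $D_x\sigma_k$ and $D_y\sigma_k$. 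Young's inequality \eqref{___Young_ineq} with small $\delta$ absorbs these into half of the dissipative term, giving
\[
\tfrac{d}{dt}\hat\E|\zeta_t|^p\le-\tfrac{p\beta}{2}\hat\E|\zeta_t|^p+C|v|^p .
\]
Grönwall's differential inequality yields $\hat\E|\zeta_t|^p\le e^{-p\beta t/2}|z|^p+C^{(1)}_{\beta,p}|v|^p$. Taking $p$-th roots and using subadditivity of $a\mapsto a^{1/p}$ gives the second inequality. The delicate point is getting the cross terms and the constants right in this expansion; everything else is routine.
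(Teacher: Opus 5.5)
Your proposal is correct and follows essentially the same route as the paper's proof. The paper also gets differentiability from Theorem~6.5 of \cite{marinelli2020frechet} applied to the augmented state $V^{x,y}_t=(x,\hat Y^{x,y}_t)$, and then argues exactly as you do: localization with BDG, Grönwall and Fatou for the maximal bound; the $L^2$ estimate plus Vitali for continuity; and It\^o's formula with \eqref{___dissipativity_equiv} and Young's inequality \eqref{___Young_ineq} on the same cross terms you identify for the dissipative estimate.
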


\begin{proof}
Set $V_t^{x,y}=(x,\hat Y_t^{x,y})$. A standard differentiability theorem for SDEs (see, e.g., Theorem 6.5 in~\cite{marinelli2020frechet}) shows that for all $T\ge 0$, $(x,y) \mapsto (V_t^{x,y})_{t \in [0,T]} \in L_2(C([0,T],\R^{d_1+d_2}))$ is differentiable and $DV_t^{x,y}(v,z)=(v, \zeta_t^{x,y,z,v})$.
Therefore, \eqref{eq:Dxconv} and \eqref{eq:DxSDE} hold.

The argument is analogous to Lemma~\ref{___lem_DyFlow}.
Recall the constants $C^{(1)}_p,C^{(1)}_2 > 0$ from \eqref{___Y_BDG_p}, and denote by $C^{(2)}>0$ a bound for the derivatives of the coefficients.
Let $\zeta := \zeta^{x,y} :=\zeta^{x,y,z,v}$ for $x,v \in \R^{d_1}$, $y,z \in \R^{d_2}$ and consider the localizing sequence $(\tau_N)_{N\in\N}$ defined by 
    \begin{align*}
        \tau_N := \inf\{ t \geq 0: |\zeta_t | \geq N\}.
    \end{align*}
    Then there exists a constant $C^{(3)}_{p,T} > 0$ such that
    \begin{align*}
        \hat \E \Big[\sup_{0 \leq s \leq t \wedge \tau_N} |\zeta_s|^p \Big] 
        &\leq 
        (2d_2+3)^{p-1} \bigg(|z|^p + \hat \E \bigg[\bigg(\int_0^{t\wedge \tau_N} |D_x g(x,\hat Y^{x,y}_s)v|ds \bigg)^p \bigg]
        \\ 
        &\qquad 
        + \hat \E \bigg[ \bigg(\int_0^{t \wedge \tau_N} |D_y g(x, \hat Y^{x,y}_s) \zeta_s |ds \bigg)^p \bigg] 
        \\
        &\qquad 
        + C^{(1)}_p \sum_{k=1}^{d_2} \hat \E \bigg[ \bigg(\int_0^{t \wedge \tau_N} |D_x \sigma_k (x, \hat Y^{x,y}_s) v|^2 ds\bigg)^{p/2} \bigg]
        \\ 
        &\qquad 
        +C^{(1)}_p \sum_{k=1}^{d_2} \hat \E \bigg[ \bigg( \int_0^{t \wedge \tau_N} |D_y \sigma_k (x, \hat Y^{x,y}_s) \zeta_s|^2 ds \bigg)^{p /2} \bigg]
        \bigg)
        \\
        &\leq C^{(3)}_{p,T} (|z|^p + |v|^p) + C^{(3)}_{p,T} \int_0^t \hat \E \Big[ \sup_{0 \leq r \leq s \wedge \tau_N} |\zeta_r|^p \Big] ds.
    \end{align*} 
    holds for all $0 \leq t \leq T$ and as a consequence of Fatou's and Grönwall's lemmas,
    \begin{align*}
        \hat \E \Big[\sup_{0 \leq s \leq T} |\zeta_s|^p \Big] \leq \liminf_{N \to \infty} \hat \E \Big[\sup_{0 \leq s \leq T \wedge \tau_N} |\zeta_s|^p \Big] \leq  C^{(3)}_{p,T} (|z|^p + |v|^p) e^{C^{(3)}_{p,T}T}
    \end{align*}
    for any $x,v \in \R^{d_1}$, $y,z \in \R^{d_2}$.

To show continuity
\begin{align*}
    \hat \E \Big[ \sup_{0 \leq t \leq T} |\zeta^{x',y'}_t - \zeta^{x,y}_t|^p \Big] \longrightarrow 0, \quad (x',y')\to (x,y)
\end{align*}
for $p \geq 2$ it suffices to prove the convergence for the case $p =2$ as the $L^p$-boundedness for arbitrary $p \geq 2$ implies the general case by the Vitali's convergence theorem as in Lemma~\ref{___lem_Y} and Lemma~\ref{___lem_DyFlow}. By the BDG inequality
\begin{align*}
    \hat \E \Big[ \sup_{0 \leq s \leq t} |\zeta^{x',y'}_s - \zeta^{x,y}_s|^2 \Big]
    &\leq 
    (2d_2+2) \bigg( \hat \E \bigg[ \bigg( \int_0^t|D_x g(x',\hat Y^{x',y'}_s) v - D_x g(x,\hat Y^{x,y}_s) v| ds \bigg)^2 \bigg] \\
    &\quad  
    + \hat \E \bigg[ \bigg( \int_0^t|D_y g(x',\hat Y^{x',y'}_s) \zeta^{x',y'}_s - D_y g(x,\hat Y^{x,y}_s) \zeta^{x,y}_s| ds \bigg)^2 \bigg]
    \\
    &\quad  + C^{(1)}_2 \sum_{k=1}^{d_2} \hat \E \bigg[  \int_0^t |D_x \sigma_k (x',\hat Y^{x',y'}_s)v - D_x \sigma_k (x,\hat Y^{x,y}_s)v|^2 ds \bigg]
    \\&\quad  + C^{(1)}_2\sum_{k=1}^{d_2} \hat \E \bigg[ \int_0^t |D_y \sigma_k (x',\hat Y^{x',y'}_s)\zeta^{x',y'}_s - D_y \sigma_k (x,\hat Y^{x,y}_s)\zeta^{x,y}_s|^2 ds \bigg]
    \bigg) 
    \\ &
    \leq C^{(4)}_T \int_0^T \hat \E [Z^{x',y',x,y}_s] ds + C^{(4)}_T \int_0^t \hat \E \Big[ \sup_{0 \leq r \leq s} |\zeta^{x',y'}_r - \zeta^{x,y}_r|^2 \Big] ds
\end{align*}
for $0\leq t \leq T$ and thus by Grönwall's lemma
\begin{align*}
    \hat \E \Big[ \sup_{0 \leq t \leq T} |\zeta^{x',y'}_t - \zeta^{x,y}_t|^2 \Big] \leq C^{(4)}_T e^{C^{(4)}_T T}\int_0^T \hat \E [Z^{x',y',x,y}_t]dt
\end{align*}
where
\begin{align*}
    Z^{x',y',x,y}_t 
    &
    = |D_x g(x',\hat Y^{x',y'}_t)v-D_xg(x, \hat Y^{x,y}_t)v|^2 
    \\&\qquad
    + |( D_y g(x',\hat Y^{x',y'}_t) - D_y g(x,\hat Y^{x,y}_t) )\zeta^{x,y}_t|^2 
    \\&\qquad
    + \sum_{k=1}^{d_2}  |D_x \sigma_k(x',\hat Y^{x',y'}_t)v - D_x \sigma_k(x,\hat Y^{x,y}_t)v|^2
    \\&\qquad
    +  \sum_{k=1}^{d_2} |(D_y \sigma_k (x',\hat Y^{x',y'}_t) - D_y \sigma_k (x,\hat Y^{x,y}_t)) \zeta^{x,y}_t|^2 
\end{align*}
for $0\leq t \leq T$, $x',x,v \in \R^{d_1}$, and $y',y,z \in \R^{d_2}$. Note that by the uniform $L^4$-boundedness of $\zeta^{x,y,z,v}$ in $x,y$, Fatou's lemma and Vitali's convergence theorem give
\begin{align*}
    \limsup_{(x',y')\to (x,y)} \hat \E \Big[ \sup_{0 \leq t \leq T} |\zeta^{x',y'}_t - \zeta^{x,y}_t|^2 \Big] \leq C^{(4)}_T e^{C^{(4)}_TT}\int_0^T \limsup_{(x',y')\to (x,y)}\hat \E [Z^{x',y',x,y}_t]dt =0
\end{align*}
for any $x,v \in \R^{d_1}$ and $y,z \in \R^{d_2}$.

Now, assume that $\hat Y^x$ is $p$-dissipative and fix $x,v\in\R^{d_1}$, $y,z \in \R^{d_2}$.
Then an application of the Itô formula under expectation yields
\begin{align*}
&\hat{\E}[|\zeta^{x,y,z,v}_t|^p]
\\
&=
|z|^p  + p\int_0^t \hat{\E}\Big[ |\zeta^{x,y,z,v}_s|^{p-2} \Big\langle \zeta^{x,y,z,v}_s, D_x g(x,\hat{Y}^{x,y}_s)v + D_y g(x,\hat{Y}^{x,y}_s) \zeta^{x,y,z,v}_s \Big\rangle \Big] ds \\
&\qquad + \frac{p}{2} \sum_{k=1}^{d_2} \int_0^t \hat{\E}\Big[ |\zeta^{x,y,z,v}_s|^{p-2} | D_x \sigma_k(x,\hat{Y}^{x,y}_s)v + D_y \sigma_k(x,\hat{Y}^{x,y}_s) \zeta^{x,y,z,v}_s |^2 \Big] ds \\
&\qquad
+
\frac{p(p-2)}{2} \sum_{k=1}^{d_2} \int_0^t \hat{\E}\Big[ |\zeta^{x,y,z,v}_s|^{p-4} \Big\langle \zeta^{x,y,z,v}_s, D_x \sigma_k(x,\hat{Y}^{x,y}_s)v +
D_y \sigma_k(x,\hat{Y}^{x,y}_s) \zeta^{x,y,z,v}_s \Big\rangle^2 \Big] ds \\
&\qquad + p\sum_{k=1}^{d_2} \hat \E \bigg[ \int_0^t  |\zeta^{x,y,z,v}_s|^{p-2} \Big\langle \zeta^{x,y,z,v}_s, D_x \sigma_k(x,\hat{Y}^{x,y}_s)v + D_y \sigma_k(x,\hat{Y}^{x,y}_s) \zeta^{x,y,z,v}_s \Big\rangle d \hat W^k_s \bigg]
\end{align*}
for $t \geq 0$ where the stochastic integral inside the expectation of the last line is a true martingale.
By taking the derivative and applying \eqref{___dissipativity_equiv} as well as Young's inequality \eqref{___Young_ineq}, one finds $C^{(5)}_{\beta,p} >0$ such that 
\begin{align*}
    \frac{d}{dt} \hat{\E}[|\zeta^{x,y,z,v}_t|^p] 
    &\leq
    p \hat{\E}\Big[ |\zeta^{x,y,z,v}_t|^{p-2} \Big\langle \zeta^{x,y,z,v}_t, D_x g(x,\hat{Y}^{x,y}_t)v + D_y g(x,\hat{Y}^{x,y}_t) \zeta^{x,y,z,v}_t \Big\rangle \Big] \\
    &\qquad
    + \frac{p(p-1)}{2} \sum_{k=1}^{d_2} \hat{\E}\Big[ |\zeta^{x,y,z,v}_t|^{p-2} | D_x \sigma_k(x,\hat{Y}^{x,y}_t)v + D_y \sigma_k(x,\hat{Y}^{x,y}_t) \zeta^{x,y,z,v}_t |^2 \Big]
    \\
    &\leq
    -p\beta \hat{\E}[|\zeta^{x,y,z,v}_t|^{p}] 
    + (pC^{(2)}+d_2 p (p-1) (C^{(2)})^2) \hat{\E}[|\zeta^{x,y,z,v}_t|^{p-1}]|v| 
    \\
    &\qquad + \frac{d_2 p(p-1)}{2} (C^{(2)})^2\hat{\E}[|\zeta^{x,y,z,v}_t|^{p-2}]|v|^2 
    \\
    &\leq -\frac{p\beta}{2} \hat{\E}[|\zeta^{x,y,z,v}_t|^{p}] + C^{(5)}_{\beta,p} |v|^p.
\end{align*}
Hence, by Grönwall's differential inequality
\begin{align*}
    \hat{\E}[|\zeta^{x,y,z,v}_t|^p] \leq |z|^p e^{-(p\beta /2)t} + \frac{2C^{(5)}_{\beta,p}}{p\beta}|v|^p (1-e^{-(p\beta /2)t}) \leq |z|^p e^{-(p\beta /2)t} + \frac{2C^{(5)}_{\beta,p}}{p\beta}|v|^p
\end{align*}    
for all $t \geq 0$.
\end{proof}

\begin{lemma}\label{___lem_DyPhi}
Let $\beta>0$ and assume that $(\hat Y^x_t)_{t \ge 0}$ is $2$-dissipative with constant $\beta$. Suppose that $f$ is $L_f$-Lipschitz and differentiable in $y$ with $D_y f$ bounded and jointly continuous on $\R^{d_1+d_2}$. Under the hypotheses of Lemma~\ref{___lem_DyFlow}, $\Phi$ is differentiable in $y$, its derivative is jointly continuous on $\R^{d_1+d_2}$, and
    \begin{align*}
        |D_y \Phi (x,y)| \leq \frac{L_f}{\beta} 
    \end{align*}
    for all $x \in \R^{d_1}$ and $y \in \R^{d_2}$.
\end{lemma}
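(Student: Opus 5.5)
The plan is to differentiate the defining integral of $\Phi$ under the integral sign. Since $\bar f(x)$ does not depend on $y$,
\[
\Phi(x,y) = \int_0^\infty \big(\bar f(x) - P_t^x f(x,y)\big)\, dt ,
\]
and it suffices to handle $y\mapsto P_t^x f(x,y) = \hat\E[f(x,\hat Y^{x,y}_t)]$ for each fixed $t$, together with a $t$-integrable dominating bound.

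\textbf{Step 1 (derivative for fixed $t$).} Fix $t\ge0$ and $v\in\R^{d_2}$. Write the difference quotient as
\[
\frac{1}{h}\big(f(x,\hat Y^{x,y+hv}_t)-f(x,\hat Y^{x,y}_t)\big)
= \int_0^1 D_y f\big(x,\hat Y^{x,y}_t+\theta(\hat Y^{x,y+hv}_t-\hat Y^{x,y}_t)\big)\, d\theta\;\frac{\hat Y^{x,y+hv}_t-\hat Y^{x,y}_t}{h}.
\]
By Lemma~\ref{___lem_DyFlow}, the quotient $(\hat Y^{x,y+hv}_t-\hat Y^{x,y}_t)/h$ converges in $L^2$ to $D_y\hat Y^{x,y}_t v$. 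By Lemma~\ref{___lem_Y}, $\hat Y^{x,y+hv}_t\to\hat Y^{x,y}_t$ in $L^2$. Since $D_y f$ is bounded and continuous, dominated convergence along subsequences gives
\[
D_y P_t^x f(x,\cdot)(y)v=\hat\E\big[D_y f(x,\hat Y^{x,y}_t)D_y\hat Y^{x,y}_t v\big].
\]
The same argument, using the $L^p$-continuity in $(x,y)$ of $\hat Y^{x,y}_t$ and of $D_y\hat Y^{x,y}_t v$ from Lemmas~\ref{___lem_Y} and~\ref{___lem_DyFlow}, together with the joint continuity of $D_yf$, shows that this derivative is jointly continuous in $(x,y)$.

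\textbf{Step 2 (exponential bound).} Since $|D_yf|\le L_f$, the dissipativity estimate of Lemma~\ref{___lem_DyFlow} with $p=2$ gives
\[
\big|D_y P_t^x f(x,\cdot)(y)v\big|\le L_f\,\|D_y\hat Y^{x,y}_t v\|_{L^1}\le L_f\,\|D_y\hat Y^{x,y}_t v\|_{L^2}\le L_f e^{-\beta t}|v| .
\]
Alternatively, Lemma~\ref{lem:frozenmoment}(i) directly gives the Lipschitz bound $|P_t^xf(x,y_1)-P_t^xf(x,y_2)|\le L_f e^{-\beta t}|y_1-y_2|$.

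\textbf{Step 3 (passing the derivative through the integral).} The bound $L_f e^{-\beta t}|v|$ is integrable in $t$ and uniform in $(x,y)$. We therefore differentiate under the integral, via the mean value theorem applied to each truncated integral $\int_0^N$ and dominated convergence. This yields
\[
D_y\Phi(x,y)v=-\int_0^\infty D_yP^x_tf(x,\cdot)(y)v\,dt, \qquad |D_y\Phi(x,y)|\le \int_0^\infty L_f e^{-\beta t}\,dt=\frac{L_f}{\beta}.
\]
Joint continuity of $D_y\Phi$ then follows from Step 1 by dominated convergence in $t$, using the same uniform integrable majorant.

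\textbf{Main obstacle.} The only delicate step is the joint continuity in Step 1. It requires combining convergence in probability of $D_yf(x',\hat Y^{x',y'}_t)$ with $L^2$-convergence of the derivative flow. This is handled by splitting
\[
D_yf(x',\hat Y')\zeta' - D_yf(x,\hat Y)\zeta = D_yf(x',\hat Y')(\zeta'-\zeta) + \big(D_yf(x',\hat Y')-D_yf(x,\hat Y)\big)\zeta ,
\]
where $\hat Y'=\hat Y^{x',y'}_t$, $\zeta'=D_y\hat Y^{x',y'}_t v$, and $\hat Y,\zeta$ denote the corresponding quantities at $(x,y)$. The first term is bounded by $L_f\|\zeta'-\zeta\|_{L^1}\to0$. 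The second term tends to zero by bounded convergence in probability combined with the uniform integrability of $\zeta$.
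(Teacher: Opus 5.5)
Your proposal is correct and follows essentially the same route as the paper. You differentiate $F(t,x,\cdot)=P^x_tf(x,\cdot)$ via the mean value theorem and the $L^2$-convergence of the flow difference quotient from Lemma~\ref{___lem_DyFlow}, dominate uniformly by $L_f e^{-\beta t}|v|$ using Lemma~\ref{lem:frozenmoment}(i) and the $p=2$ estimate of Lemma~\ref{___lem_DyFlow}, and pass through $\int_0^\infty\,dt$ by dominated convergence. The only cosmetic difference is that you use dominated convergence in probability (along subsequences) where the paper invokes Vitali's theorem with an $L^2$-bound.
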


\begin{proof}
    We set
    \begin{align}\label{___DyPhi_F_def}
        F \colon [0,\infty)\times\R^{d_1+d_2} \to \R^{d_1}\quad ,\quad F(t,x,y) = P_t^x [f(x,\cdot)](y) = \hat \E[f(x, \hat Y^{x,y}_t)]
    \end{align} 
    and recall the definition
    \begin{align*}
        \Phi(x,y) = \int_0^\infty \bar{f}(x)-F(t,x,y) \,  dt = \int_0^\infty \bar{f}(x) - \hat{\E} [f(x,\hat{Y}^{x,y}_t)] dt, \qquad (x,y)\in\R^{d_1+d_2}.
    \end{align*}
    The proof of the differentiability is carried out by consecutively interchanging the order of differentiation and integration with respect to the Lebesgue measure over $[0,\infty)$ and with respect to $\hat{\P}$. For $t \geq 0$, $h \in (-1,1) \setminus \{0\}$, $x \in \R^{d_1}$, and $y,v \in \R^{d_2}$, we consider
    \begin{align*}
        \frac{F(t,x,y+hv) - F(t,x,y)}{h} = \hat \E \bigg[ \frac{f(x,\hat{Y}^{x,y+hv}_t) - f(x,\hat{Y}^{x,y}_t)}{h} \bigg].
    \end{align*}
    Using the mean value theorem, we get for $y_1,y_2 \in \R^{d_2}$
    \begin{align} \label{eq:239r348z9}
        f(x,y_2) - f(x,y_1) = \int_0^1 D_y f(x,y_1+r (y_2-y_1)) (y_2-y_1) dr,
    \end{align}
    so that
    \begin{align}\label{___DyPhi_0}
    \begin{split}
        &\bigg\| \frac{f(x,\hat{Y}^{x,y+hv}_t) - f(x,\hat{Y}^{x,y}_t)}{h} - D_yf(x,\hat Y^{x,y}_t) D_y \hat Y^{x,y}_t v\bigg\|_{L^1} 
        \\
        &
        \leq \int_0^1 \bigg\|\Big( D_y f(x,\hat Y^{x,y}_t+r (\hat{Y}^{x,y+hv}_t-\hat{Y}^{x,y}_t)) - D_yf(x,\hat Y^{x,y}_t ) \Big) \frac{\hat{Y}^{x,y+hv}_t-\hat{Y}^{x,y}_t}{h}\bigg\|_{L^1} dr 
        \\
        &\qquad +\bigg\|  D_yf(x,\hat Y^{x,y}_t ) \Big( \frac{\hat{Y}^{x,y+hv}_t-\hat{Y}^{x,y}_t}{h} - D_y \hat Y^{x,y}_t v \Big) \bigg\|_{L^1}.
    \end{split}
    \end{align}
    By Lemma \ref{___lem_DyFlow} and $|D_y f | \leq L_f$, the latter term vanishes for $h\to 0$. Now, Lemma~\ref{lem:frozenmoment} implies
    \begin{align*}
        &\bigg\| \Big|D_y f(x,\hat Y^{x,y}_t+r (\hat{Y}^{x,y+hv}_t-\hat{Y}^{x,y}_t)) - D_yf(x,\hat Y^{x,y}_t )\Big| \bigg|\frac{\hat{Y}^{x,y+hv}_t-\hat{Y}^{x,y}_t}{h} \bigg| \bigg\|_{L^2}
        \leq 2L_f e^{-\beta t}|v|
    \end{align*}
    for $h \in (-1,1)\setminus\{0\}$, and $r \in [0,1]$. Thus, the term inside the $L^2$-norm on the left-hand side is uniformly integrable with respect to $h$. The same term also converges in probability for $h \to 0$ by the continuity of $D_y f$ and, thus, it converges in $L^1$ by Vitali's convergence theorem. As the right-hand side in the latter inequality does not depend on $r$, we can use the dominated convergence theorem to get convergence of the first term on the right-hand side in \eqref{___DyPhi_0}. Hence, 
    \begin{align*}
        \bigg\| \frac{f(x,\hat{Y}^{x,y+hv}_t) - f(x,\hat{Y}^{x,y}_t)}{h} - D_y f(x,\hat Y^{x,y}_t) D_y \hat Y^{x,y}_t v \bigg\|_{L^1} \overset{h \to 0}{\longrightarrow} 0,
    \end{align*}
    which implies existence of a directional derivative of $F$ at $(x,y)$ in direction $y$, and
    \begin{align} \label{___DyPhi_F}
        D_y F(t,x,y) v = \lim\limits_{h\to0} \hat \E \bigg[ \frac{f(x,\hat{Y}^{x,y+hv}_t) - f(x,\hat{Y}^{x,y}_t)}{h} \bigg] = \hat \E[ D_y f(x, \hat Y^{x,y}_t ) D_y \hat Y^{x,y}_t v].
    \end{align}
    For the remaining claims note that by \eqref{eq:239r348z9} and Lemma~\ref{lem:frozenmoment},
    \begin{align} \label{___DyPhi_1}
        \bigg| \frac{F(t,x,y+hv)-F(t,x,y)}{h} \bigg|
        \leq \bigg\| \frac{f(x,\hat{Y}^{x,y+hv}_t) - f(x,\hat{Y}^{x,y}_t)}{h} \bigg\|_{L^2} 
        \leq L_f  e^{-\beta t}|v|
    \end{align}
    and that \eqref{___DyPhi_F} as well as Lemma~\ref{___lem_DyFlow} give
    \begin{align} \label{___DyPhi_2}
        |D_y F(t,x,y)v|
        \leq \| D_y f(x,\hat Y^{x,y}_t) D_y \hat Y^{x,y}_t v \|_{L^2} 
        \leq L_f  e^{-\beta t}|v|
    \end{align}
    for all $t \geq 0$, $h \in (-1,1) \setminus\{0\}$, $x\in \R^{d_1}$, and $y , v \in \R^{d_2}$.
    Since for each $t \geq 0$ and $v \in \R^{d_2}$,
    \begin{align*}
        D_y f(x, \hat Y^{x,y}_t ) D_y \hat Y^{x,y}_t v
    \end{align*}
    is continuous in $(x,y) \in \R^{d_1+d_2}$ with respect to convergence in probability and uniformly integrable by \eqref{___DyPhi_2}, continuity of $D_y F(t,\cdot,\cdot)v$ holds by Vitali's convergence theorem. 

    By the dominated convergence theorem, the existence and the continuity of the directional derivative $D_y \Phi(x,y)v$ follows from \eqref{___DyPhi_1} and \eqref{___DyPhi_2}, respectively. Moreover, \eqref{___DyPhi_2} yields 
    \begin{align*}
        |D_y \Phi(x,y) v|  = \bigg| \int_0^\infty D_y F(t,x,y) v \,  dt \bigg| \leq \frac{L_f}{\beta} |v|
    \end{align*}
    for all $x \in \R^{d_1}$ and $y,v \in \R^{d_2}$.
\end{proof}

\begin{lemma}\label{___lem_D2yPhi}
    Let $\beta>0$ and assume that $(\hat Y^x_t)_{t \ge 0}$ is $2$-dissipative with constant $\beta$. Suppose that $f$ is $L_f$-Lipschitz and twice differentiable in $y$ with $D_y f$ as well as $D^2_y f$ bounded and jointly continuous on $\R^{d_1+d_2}$. Under the hypotheses of Lemma~\ref{___lem_DyFlow}, $\Phi$ is twice differentiable in $y$, its second-order derivative is jointly continuous on $\R^{d_1+d_2}$, and there exists a constant $C_\beta >0$ with
    \begin{align*}
        |D^2_y \Phi (x,y)| \leq C_{\beta}
    \end{align*}
    for all $x \in \R^{d_1}$ and $y \in \R^{d_2}$
\end{lemma}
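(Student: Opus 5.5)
We follow the scheme of Lemma~\ref{___lem_DyPhi}, now one derivative higher. With $F(t,x,y)=\hat\E[f(x,\hat Y^{x,y}_t)]$ as in \eqref{___DyPhi_F_def}, we have $D_y\Phi(x,y)v=-\int_0^\infty D_yF(t,x,y)v\,dt$, where
\begin{align*}
D_yF(t,x,y)v=\hat\E[D_yf(x,\hat Y^{x,y}_t)D_y\hat Y^{x,y}_tv].
\end{align*}
We differentiate this expression once more in direction $v_2$. The candidate second derivative is
\begin{align*}
D_y^2F(t,x,y)(v_1,v_2)=\hat\E\big[D_y^2f(x,\hat Y^{x,y}_t)(D_y\hat Y^{x,y}_tv_1,D_y\hat Y^{x,y}_tv_2)+D_yf(x,\hat Y^{x,y}_t)D_y^2\hat Y^{x,y}_t(v_1,v_2)\big].
\end{align*}
Justifying the interchange of the difference quotient in $h$ and the expectation at fixed $t$ works exactly as before. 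We use the mean value formula \eqref{eq:239r348z9} for $D_yf$, the $L^2$-convergence of the difference quotients of $D_y\hat Y$ from Lemma~\ref{___lem_DyFlow}, and the bounds $|D_yf|,|D^2_yf|\le C$ together with Vitali's theorem. Joint continuity of $D_y^2F(t,\cdot,\cdot)$ likewise follows from the $L^p$-continuity of $\hat Y$, $D_y\hat Y$ and $D_y^2\hat Y$ in $(x,y)$ stated in Lemma~\ref{___lem_DyFlow}, combined with uniform integrability.

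The decisive step, and the main obstacle, is an integrable-in-time bound $|D^2_yF(t,x,y)(v_1,v_2)|\le Ce^{-\gamma t}|v_1||v_2|$ that is uniform in $(x,y)$. Lemma~\ref{___lem_DyFlow} only gives bounds on $D_y^2\hat Y$ over finite horizons, so this requires a new estimate. The first term of the candidate derivative is easy. By Cauchy--Schwarz it is at most $C\|D_y\hat Y_tv_1\|_{L^2}\|D_y\hat Y_tv_2\|_{L^2}\le Ce^{-2\beta t}|v_1||v_2|$, using only $2$-dissipativity. For the second term we need $\|D_y^2\hat Y^{x,y}_t(v_1,v_2)\|_{L^2}\lesssim e^{-\gamma t}|v_1||v_2|$. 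We plan to apply Itô's formula to $|\zeta^2_t|^2$ with $\zeta^2=D^2_y\hat Y(v_1,v_2)$ and to use \eqref{___dissipativity_equiv} with $p=2$ on the linear part. Writing $R_t=D_y^2g(\cdot)(\zeta^1_t,\tilde\zeta^1_t)$ and $S_t^k=D^2_y\sigma_k(\cdot)(\zeta^1_t,\tilde\zeta^1_t)$ for the inhomogeneous terms, where $\zeta^1=D_y\hat Y v_1$ and $\tilde\zeta^1=D_y\hat Y v_2$, we obtain
\begin{align*}
\tfrac{d}{dt}\hat\E|\zeta^2_t|^2\le -2\beta\hat\E|\zeta^2_t|^2+2\hat\E|\zeta^2_t||R_t|+\sum_k\hat\E\big(2|D_y\sigma_k\zeta^2_t||S^k_t|+|S^k_t|^2\big).
\end{align*}
The cross term $2|D_y\sigma_k\zeta^2||S^k|$ cannot be absorbed by $-2\beta|\zeta^2|^2$ unless we give up part of the dissipation. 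Two fixes are available. One can use Young's inequality with a small parameter together with the fact that $2$-dissipativity forces $\sum_k|D_y\sigma_k v|^2\le -2\langle v,D_ygv\rangle-2\beta|v|^2$. Alternatively, one can use dissipativity slightly above $2$ as in Remark~\ref{rem:dissibigger}, paying an arbitrarily small fraction of $\beta$. Either way we arrive at
\begin{align*}
\tfrac{d}{dt}\hat\E|\zeta^2_t|^2\le-\beta\hat\E|\zeta^2_t|^2+C\,\hat\E\big[|\zeta^1_t|^2|\tilde\zeta^1_t|^2\big].
\end{align*}

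The forcing term needs fourth moments of $D_y\hat Y$. If only $2$-dissipativity is available, $\hat\E|\zeta^1_t|^4$ still grows at most exponentially at a controlled rate, but it need not decay. We therefore expect to use dissipativity for a $p$ slightly larger than $2$, via Remark~\ref{rem:dissibigger}. Failing that, we will interpolate the $L^4$-moment between the decaying $L^2$-bound $e^{-\beta t}$ from Lemma~\ref{___lem_DyFlow} and a crude growth bound; this is where we most expect difficulty. Once the forcing is $\le Ce^{-2\gamma t}|v_1|^2|v_2|^2$ for some $\gamma>0$, Grönwall's inequality gives $\|\zeta^2_t\|_{L^2}\le Ce^{-\gamma' t}|v_1||v_2|$ since $\zeta^2_0=0$.

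Dominated convergence in $t$ then allows differentiating under $\int_0^\infty$. This yields existence and joint continuity of $D_y^2\Phi$, and the bound $|D^2_y\Phi|\le C\int_0^\infty e^{-\gamma' t}dt=C_\beta$.
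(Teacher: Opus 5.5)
Your overall architecture is the paper's: differentiate $D_yF$ under $\hat\E$ at fixed $t$, find a bound on $D^2_yF(t,x,y)$ uniform in $(x,y)$ and integrable in $t$, then apply dominated convergence. Your handling of the $D^2_yf$-term via $\|\zeta^1_t\|_{L^2}\|\tilde\zeta^1_t\|_{L^2}\le e^{-2\beta t}|v_1||v_2|$ is correct.

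\textbf{The gap is in the decisive step.} Your differential inequality for $\hat\E|\zeta^2_t|^2$ is driven by $\hat\E[|\zeta^1_t|^2|\tilde\zeta^1_t|^2]$, i.e.\ by fourth moments of the first variation. These are not controlled by $2$-dissipativity. Take $d_2=1$, $g(x,y)=ay$, $\sigma(x,y)=by$ with $a+b^2/2<0<a+3b^2/2$. All hypotheses hold, and $D_y\hat Y^{x,y}_tv$ is a geometric Brownian motion whose second moment decays while its fourth moment grows exponentially. Adding to $\sigma$ a small smooth perturbation with $D^2_y\sigma\not\equiv0$ preserves both properties and makes the constant in front of your forcing term positive.

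Neither of your proposed repairs works. Remark~\ref{rem:dissibigger} only yields $q$-dissipativity for $q<2+2\beta/L_\sigma^2$, which reaches $q=4$ only if $\beta>L_\sigma^2$. Interpolating against a crude growth bound cannot make a moment that genuinely grows decay. So the bound ``forcing $\le Ce^{-2\gamma t}|v_1|^2|v_2|^2$'' cannot be derived from the hypotheses, and the $L^2$-route does not close.

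\textbf{The missing idea is to estimate $D^2_y\hat Y$ only in $L^1$, using a $1$-homogeneous Lyapunov functional.} An $L^1$ bound suffices because $D_yf$ is bounded, so $|D^2_yF(t,x,y)(v_1,v_2)|\le C\|D^2_y\hat Y_t(v_1,v_2)\|_{L^1}+C\|\zeta^1_t\|_{L^2}\|\tilde\zeta^1_t\|_{L^2}$.

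The paper sets $Z_t=D_y\hat Y^{x,y+hv_2}_tv_1-D_y\hat Y^{x,y}_tv_1$, $\widetilde Z_t=\hat Y^{x,y+hv_2}_t-\hat Y^{x,y}_t$, $\zeta_t=D_y\hat Y^{x,y}_tv_1$ and $\rho=|Z|^2+|\widetilde Z|^2|\zeta|^2$. It then applies It\^o's formula to $(\rho_t+\gamma^2)^{1/2}$. This works for three reasons:
\begin{enumerate}
\item Because the size $|\widetilde Z|^2|\zeta|^2$ of the forcing is built into $\rho$, after division by $(\rho+\gamma^2)^{1/2}$ all cross and cross-variation terms are absorbed into the dissipation $-\beta|Z|^2/(\rho+\gamma^2)^{1/2}$. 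What remains is at most $C|\widetilde Z|^2|\zeta|^2/(\rho+\gamma^2)^{1/2}\le C|\widetilde Z||\zeta|$.
\item By Cauchy--Schwarz, $\hat\E[|\widetilde Z_t||\zeta_t|]\le\|\widetilde Z_t\|_{L^2}\|\zeta_t\|_{L^2}\le e^{-2\beta t}|h||v_1||v_2|$, which uses only second moments and $2$-dissipativity.
\item The It\^o correction $-\tfrac18(\rho+\gamma^2)^{-3/2}\,d[\rho]$ has a favourable sign.
\end{enumerate}
Gr\"onwall's inequality and $\gamma\downarrow0$ then give $\|Z_t\|_{L^1}\le Ce^{-(\beta/2)t}|h||v_1||v_2|$ uniformly in $h$. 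Hence $\|D^2_y\hat Y_t(v_1,v_2)\|_{L^1}\le Ce^{-(\beta/2)t}|v_1||v_2|$, which gives the required integrable bound on $D^2_yF$.

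The same functional with $\rho=|\zeta^2|^2+|\zeta^1|^2|\tilde\zeta^1|^2$ works directly for your $\zeta^2$. With that replacement, the rest of your plan (mean value theorem plus dominated convergence in $t$) goes through.
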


\begin{proof}
    Define $F$ as in \eqref{___DyPhi_F_def}. As before we consider difference quotients
    \begin{align}
    \label{___D2yPhi_1}
    \begin{split}
        &\frac{D_y F(t,x,y+hv_2)v_1 - D_y F(t,x,y)v_1}{h} \\
        &= \hat{\E} \bigg[ \frac{D_y f(x,\hat{Y}^{x,y+hv_2}_t) D_y \hat{Y}^{x,y+hv_2}_t v_1 - D_y f(x,\hat{Y}^{x,y}_t) D_y \hat{Y}^{x,y}_t v_1}{h} \bigg] \\
        &= \hat\E \bigg[D_y f(x,\hat{Y}^{x,y+hv_2}_t) \frac{D_y \hat{Y}^{x,y+hv_2}_t v_1 -D_y \hat{Y}^{x,y}_t v_1 }{h} \bigg] \\
        &\qquad + \hat \E \bigg[ \frac{D_y f(x,\hat{Y}^{x,y+hv_2}_t) - D_y f(x,\hat{Y}^{x,y}_t)}{h}D_y \hat{Y}^{x,y}_t v_1\bigg]
    \end{split}
    \end{align}
    for $t \geq 0$, $h \in (-1,1) \setminus \{0\}$, $x \in \R^{d_1}$, and $y,v_1,v_2 \in \R^{d_2}$. 
    Fix $t \geq 0$, $x \in \R^{d_1}$, $y,v_1,v_2\in \R^{d_2}$ and let $C^{(1)} >0$ be the bound $C^{(1)} \geq |D^2_y f|$.
    Then by passing $h \to 0$, a combination of the $L^2$-convergence
    \begin{align*}%\label{___D2yPhi4}
        \||D_y f(x,\hat{Y}^{x,y+hv_2}_t) - D_y f(x,\hat{Y}^{x,y}_t)| \|_{L^2} \leq C^{(1)} \|\hat{Y}^{x,y+hv_2}_t - \hat{Y}^{x,y}_t\|_{L^2} \longrightarrow 0
    \end{align*}
    due to Lemma~\ref{lem:frozenmoment} and the $L^2$-convergence 
    \begin{align*}
         \bigg\| \frac{D_y \hat{Y}^{x,y+hv_2}_t v_1 -D_y \hat{Y}^{x,y}_t v_1 }{h} - D_y^2\hat{Y}^{x,y}_t(v_1,v_2)  \bigg\|_{L^2} \longrightarrow 0
    \end{align*}
    due to Lemma~\ref{___lem_DyFlow}, yields the convergence of the first term of \eqref{___D2yPhi_1} by
    \begin{align*}
        \bigg\| D_y f(x,\hat{Y}^{x,y+hv_2}_t)\frac{D_y \hat{Y}^{x,y+hv_2}_t v_1 -D_y \hat{Y}^{x,y}_t v_1 }{h} - D_y f(x,\hat{Y}^{x,y}_t)D_y^2\hat{Y}^{x,y}_t(v_1,v_2)  \bigg\|_{L^1} \longrightarrow 0.
    \end{align*}
    For the second term, the second order mean value theorem
    \begin{align*}
        D_y f(x,y_2) v - D_y f(x,y_1) v = \int_0^1 D_y^2 f(x,y_1+r (y_2-y_1)) (v,y_2-y_1) dr, \qquad y_1,y_2,v \in \R^{d_2}
    \end{align*}
    gives
    \begin{align*}
        &\bigg\| \frac{D_y f(x,\hat{Y}^{x,y+hv_2}_t) - D_y f(x,\hat{Y}^{x,y}_t)}{h}D_y \hat{Y}^{x,y}_t v_1 - D_y^2 f(x,\hat{Y}^{x,y}_t) (D_y \hat{Y}^{x,y}_t v_1 , D_y \hat{Y}^{x,y}_t v_2) \bigg\|_{L^1} \\
        &\leq \int_0^1\bigg\|  \Big(D_y^2 f\Big(x,\hat{Y}^{x,y}_t + r (\hat{Y}^{x,y+hv_2}_t-\hat{Y}^{x,y}_t)\Big) - D_y^2 f(x, \hat{Y}^{x,y}_t) \Big) \bigg(D_y \hat{Y}^{x,y}_t v_1, \frac{\hat{Y}^{x,y+hv_2}_t-\hat{Y}^{x,y}_t}{h} \bigg) \bigg\|_{L^1}  \\&\qquad dr \\
        &\qquad + \bigg\| D_y^2 f(x, \hat{Y}^{x,y}_t) \bigg( D_y \hat{Y}^{x,y}_t v_1,  D_y \hat{Y}^{x,y}_t v_2 - \frac{\hat{Y}^{x,y+hv_2}_t-\hat{Y}^{x,y}_t}{h} \bigg) \bigg\|_{L^1}
    \end{align*}
    for $h \in (-1,1) \setminus \{0\}$.
    Convergence to $0$ for $h\to 0$ follows with the same arguments used for \eqref{___DyPhi_0}. Note that
    \begin{align*}
        \Big(D_y^2 f\Big(x,\hat{Y}^{x,y}_t + r (\hat{Y}^{x,y+hv_2}_t-\hat{Y}^{x,y}_t)\Big) - D_y^2 f(x, \hat{Y}^{x,y}_t) \Big) \bigg(D_y \hat{Y}^{x,y}_t v_1, \frac{\hat{Y}^{x,y+hv_2}_t-\hat{Y}^{x,y}_t}{h} \bigg),
    \end{align*}
    $h \in (-1,1)\setminus\{0\}$, is uniformly integrable since for sufficiently small $\alpha >0$ its $L^{1+\alpha}$-norm has a bound
    \begin{align}\label{___D2yPhi_2}
        2C^{(1)}  \|D_y \hat Y^{x,y}_t v_1  \|_{L^{2+2\alpha}} \bigg\| \frac{\hat{Y}^{x,y+hv_2}_t-\hat{Y}^{x,y}_t}{h}  \bigg\|_{L^{2+2\alpha}} \leq 2 C^{(1)} e^{ - (\beta /2 + \beta /2)t}|v_1||v_2|
    \end{align}
    that is uniform in $h \in (-1,1) \setminus\{0\}$ by Remark~\ref{rem:dissibigger}, though one can show that a uniform bound still holds if $\alpha =1$ at the cost of an additional coefficient that diverges for $t \to \infty$. Thus, we conclude that the second order directional derivative of $F$ exists, namely
    \begin{align}\label{___D2yPhi_3}
    \begin{split}
        &D_y [D_y F(t,x,\cdot)v_1](y)v_2 \\
        &\qquad = \lim\limits_{h \to 0} \frac{D_y F(t,x,y+hv_2)v_1 - D_y F(t,x,y)v_1}{h} \\
        &\qquad = \hat{\E}[ D_y f(x,\hat{Y}^{x,y}_t) D_y^2\hat{Y}^{x,y}_t(v_1,v_2)] + \hat{\E}[D_y^2 f(x,\hat{Y}^{x,y}_t) (D_y \hat{Y}^{x,y}_t v_1 , D_y \hat{Y}^{x,y}_t v_2) ].
    \end{split}
    \end{align}

    For the continuity fix $v_1,v_2 \in\R^{d_2}$ and $t \geq 0$. Since $\R^{d_1+d_2} \ni (x,y) \mapsto D_y f(x, \hat Y^{x,y}_t)$ is continuous with respect to convergence in probability and a.s. uniformly bounded, Vitali's convergence theorem yields continuity in the sense of $L^2$-convergence. Together with Lemma~\ref{___lem_DyFlow}, the continuity of the first expectation 
    \begin{align*}
        \| D_y f(x',\hat{Y}^{x',y'}_t) D_y^2\hat{Y}^{x',y'}_t(v_1,v_2) - D_y f(x,\hat{Y}^{x,y}_t) D_y^2\hat{Y}^{x,y}_t(v_1,v_2)\|_{L^1} \longrightarrow 0
    \end{align*}
    for $(x',y') \to(x,y) \in \R^{d_1+d_2}$ follows. Again, one can bound the $L^{1+\alpha}$-norm of the term inside the second expectation of \eqref{___D2yPhi_3} by \eqref{___D2yPhi_2} uniformly for sufficiently small $\alpha >0$. Thus, Vitali's convergence theorem also gives continuity of the remaining term
    \begin{align*}
        \|D_y^2 f(x',\hat{Y}^{x',y'}_t) (D_y \hat{Y}^{x',y'}_t v_1 , D_y \hat{Y}^{x',y'}_t v_2) - D_y^2 f(x,\hat{Y}^{x,y}_t) (D_y \hat{Y}^{x,y}_t v_1 , D_y \hat{Y}^{x,y}_t v_2) \|_{L^1} \longrightarrow 0
    \end{align*}
    for any $(x',y') \to (x,y) \in \R^{d_1+d_2}$. Combining these two limits, we obtain the continuity of
    \begin{align*}
        D^2_y F(t,x,y)(v_1,v_2)  = D_y [D_y F(t,x,\cdot)v_1](y)v_2
    \end{align*}
    in $(x,y) \in \R^{d_1+d_2}$.

    For the continuous differentiability of $\Phi$, let $C^{(2)} > 0$ be a bound for the derivatives of the coefficients. The aim is to find a bound for \eqref{___D2yPhi_1} that is uniform in $h \in (-1,1) \setminus\{0\}$ and exponentially decaying in $t \geq 0$ so that differentiability follows by the theorem of dominated convergence. In particular, we are concerned with controlling
    \begin{align}
        \bigg|\hat\E \bigg[D_y f(x,\hat{Y}^{x,y+hv_2}_t) \frac{D_y \hat{Y}^{x,y+hv_2}_t v_1 -D_y \hat{Y}^{x,y}_t v_1 }{h} \bigg] \bigg| \leq C^{(2)}\hat \E \bigg[ \bigg| \frac{D_y \hat{Y}^{x,y+hv_2}_t v_1 -D_y \hat{Y}^{x,y}_t v_1 }{h} \bigg| \bigg]
    \end{align}
    for $x \in \R^{d_1}$, $y,v_1,v_2 \in \R^{d_2}$, $h \in (-1,1)\setminus \{0\}$, and $t \geq 0$. Fix $x\in \R^{d_1}$, $y,v_1,v_2 \in \R^{d_2}$, $h \in (-1,1) \setminus \{0\}$ and set
    \begin{align*}
        Z_t := D_y \hat Y^{x,y + h v_2}_t v_1 - D_y \hat Y^{x,y}_t v_1, \quad \widetilde Z_t := \hat Y^{x,y + h v_2}_t - \hat Y^{x,y}_t, \quad \zeta_t := D_y \hat Y^{x,y}_t v_1.
    \end{align*}
    for $t \geq 0$. We consider an auxiliary function
    \begin{align*}
        \rho \colon \R^{3d_2} \to [0,\infty),\ \rho(z_1,z_2,z_3) = |z_1|^2 + |z_2|^2|z_3|^2
    \end{align*}
    and
    \begin{align*}
        \rho_t := \rho(Z_t,\widetilde Z_t, \zeta_t), \qquad t \geq 0.
    \end{align*}
    Given a fixed arbitrary smoothing parameter $\gamma>0$, we have
    \begin{align*}
        \hat \E [|Z_t|] \leq \hat \E [(|Z_t|^2 +|\widetilde Z_t|^2|\zeta_t|^2 + \gamma^2)^{1/2}] =  \hat \E [(\rho_t + \gamma^2)^{1/2}]
    \end{align*}
    for $t \geq 0$. Hence, it suffices to show the exponential convergence of the right-hand-side up to an additional constant that vanishes with $\gamma \downarrow 0$.
    By Itô's formula, we have
    \begin{align*}
        d\rho_t= 2 \langle Z_t ,dZ_t \rangle +d[Z]_t + |\widetilde Z_t|^2 (2\langle \zeta_t , d \zeta_t \rangle +d[\zeta]_t) + |\zeta_t|^2 (2\langle \widetilde Z_t , d \widetilde Z_t \rangle +d[\widetilde Z]_t) + d [|\widetilde Z|^2,|\zeta|^2]_t 
    \end{align*}
    where $[\cdot]$ is defined in \eqref{___Y_trace_covariation}, and
    \begin{align*}
        d(\rho_t + \gamma^2)^{1/2} = \frac{1}{2 (\rho_t + \gamma^2)^{1/2}} d \rho_t - \frac{1}{8(\rho_t + \gamma^2)^{3/2}} d[\rho]_t
    \end{align*}
    for $t \geq 0$. Thus, combining both dynamics under expectation yields
    \begin{align}\label{___D2yPhi_4}
    \begin{split}
        \hat{\E}[(\rho_t + \gamma^2)^{1/2}] 
        &= 
        (h^2|v_1|^2|v_2|^2 + \gamma^2)^{1/2} + \hat \E \bigg[\int_0^t \frac{2\langle Z_s,dZ_s \rangle +d[Z]_s}{2(\rho_s+\gamma^2)^{1/2}}  \bigg] 
        \\
        &\qquad + \hat \E \bigg[ \int_0^t |\widetilde Z_s|^2 \frac{2\langle \zeta_s,d\zeta_s \rangle +d[\zeta]_s}{2(\rho_s+\gamma^2)^{1/2}} \bigg] + \hat \E \bigg[ \int_0^t |\zeta_s|^2 \frac{2\langle \widetilde Z_s,d\widetilde Z_s \rangle + d[\widetilde Z]_s}{2(\rho_s+\gamma^2)^{1/2}} \bigg]
        \\
        &\qquad + \hat \E \bigg[ \int_0^t \frac{d[|\widetilde Z|^2,|\zeta|^2]_s }{2(\rho_s+\gamma^2)^{1/2}}\bigg] - \hat \E \bigg[ \int_0^t \frac{d[\rho]_s}{8(\rho_s + \gamma^2)^{3/2}} \bigg]
    \end{split}
    \end{align}
    Note that due to the continuity of the drift and diffusion coefficients of $\rho$ the last term
    \begin{align*}
        R \colon [0,\infty) \to [0,\infty),\ R_t = \hat \E \bigg[ \int_0^t \frac{d[\rho]_s}{8(\rho_s + \gamma^2)^{3/2}} \bigg]
    \end{align*}
    is a continuously differentiable non-decreasing function, i.e.
    \begin{align}\label{___D2yPhi_4.5}
        \frac{dR_t}{dt} \geq 0
    \end{align}
    for $t\geq 0$. For the second-to-last term, recall the dynamics
    \begin{align*}
        d \widetilde Z_t = (g(x,\hat Y^{x,y + h v_2}_t) - g(x,\hat Y^{x,y}_t))dt +(\sigma(x,\hat Y^{x,y + h v_2}_t) - \sigma(x,\hat Y^{x,y}_t)) d\hat W_t
    \end{align*}
    and
    \begin{align*}
        d \zeta_t = D_y g(x,\hat Y^{x,y}_t) \zeta_t dt + \sum_{k=1}^{d_2} D_y \sigma_k(x,\hat Y^{x,y}_t) \zeta_t d \hat W^k_t.
    \end{align*}
    for $t \geq 0$. Then 
    \begin{align*}
         \hat \E \bigg[ \int_0^t \frac{d[|\widetilde Z|^2,|\zeta|^2]_s }{2(\rho_s+\gamma^2)^{1/2}}\bigg] = 2\int_0^t \sum_{k=1}^{d_2} \hat \E \bigg[ \frac{\langle \widetilde Z_s , \sigma_k(x,\hat Y^{x,y+hv_2}_s) - \sigma_k (x, \hat Y^{x,y}_s) \rangle \langle \zeta_s , D_y \sigma_k(x,\hat Y^{x,y}_s) \zeta_s \rangle}{(|Z_s|^2 + |\widetilde Z_s|^2|\zeta_s|^2 +\gamma^2)^{1/2}} \bigg] ds 
    \end{align*}
    allows differentiation with
    \begin{align}\label{___D2yPhi_5}
    \begin{split}
        \frac{d}{dt} \hat \E \bigg[ \int_0^t \frac{d[|\widetilde Z|^2,|\zeta|^2]_s }{2(\rho_s+\gamma^2)^{1/2}}\bigg]
        &\leq  
        2d_2 \hat \E \bigg[ \frac{ (C^{(2)})^2 |\widetilde Z_t|^2 |\zeta_t|^2 }{(|Z_t|^2 + |\widetilde Z_t|^2|\zeta_t|^2 +\gamma^2)^{1/2}} \bigg] \\
        &\leq C^{(3)} \hat{\E} [|\widetilde Z_t||\zeta_t|] \\
        &\leq C^{(3)} \| \widetilde Z_t \|_{L^2} \| \zeta_t\|_{L^2}
    \end{split}
    \end{align}
    for all $t\geq 0$ where $C^{(3)} := 2 d_2 (C^{(2)})^2$. The next term can be written as
    \begin{align*}
        &\hat \E \bigg[ \int_0^t |\zeta_s|^2 \frac{2\langle \widetilde Z_s,d\widetilde Z_s \rangle + d[\widetilde Z]_s}{2(\rho_s+\gamma^2)^{1/2}} \bigg]
        \\
        & = \int_0^t \hat \E \bigg[ \frac{|\zeta_s|^2}{(\rho_s + \gamma^2)^{1/2}} \Big(\langle  \widetilde{Z}_s ,g(x,\hat Y^{x,y + h v_2}_s) - g(x,\hat Y^{x,y}_s)\rangle + \frac{1}{2} |\sigma(x,\hat Y^{x,y + hv_2}_s) -\sigma(x,\hat Y^{x,y}_s)|_F^2 \Big) \bigg]ds
        \\
        & \qquad + \sum_{k=1}^{d_2} \hat \E \bigg[ \int_0^t \frac{|\zeta_s|^2\langle \widetilde Z_s , \sigma_k(x,\hat Y^{x,y + h v_2}_s) - \sigma_k(x,\hat Y^{x,y}_s) \rangle}{(\rho_s + \gamma^2)^{1/2}}  d\hat W^k_s \bigg]
    \end{align*}
    for $t \geq 0$ where the local martingale term is a true martingale as the expression inside the Brownian integral is an $L^2$-integrand. Hence, one can differentiate
    \begin{align}\label{___D2yPhi_6}
    \begin{split}
        \frac{d}{dt} \hat \E \bigg[ \int_0^t |\zeta_s|^2 \frac{2\langle \widetilde Z_s,d\widetilde Z_s \rangle + d[\widetilde Z]_s}{2(\rho_s+\gamma^2)^{1/2}} \bigg] 
        &\leq -\beta \hat{\E} \bigg[ \frac{|\zeta_t|^2 |\widetilde Z_t|^2}{(\rho_t+\gamma^2)^{1/2}} \bigg]
    \end{split}
    \end{align}
    for $t \geq 0$. In the same manner,
    \begin{align*}
        &\hat \E \bigg[ \int_0^t |\widetilde Z_s|^2 \frac{2\langle \zeta_s,d\zeta_s \rangle + d[\zeta]_s}{2(\rho_s+\gamma^2)^{1/2}} \bigg]
        \\
        & = \int_0^t \hat \E \bigg[ \frac{|\widetilde Z_s|^2}{(\rho_s + \gamma^2)^{1/2}} \Big(\langle  \zeta_s ,D_y g(x,\hat Y^{x,y}_s) \zeta_s \rangle + \frac{1}{2} \sum_{k=1}^{d_2} |D_y \sigma_k(x,\hat Y^{x,y}_s) \zeta_s|^2 \Big)\bigg]ds
        \\
        & \qquad + \sum_{k=1}^{d_2} \hat \E \bigg[ \int_0^t \frac{|\widetilde Z_s|^2\langle \zeta_s , D_y \sigma_k(x,\hat Y^{x,y}_s) \zeta_s \rangle}{(\rho_s + \gamma^2)^{1/2}}  d\hat W^k_s \bigg]
    \end{align*}
    and thus,
    \begin{align}\label{___D2yPhi_7}
    \begin{split}
        \frac{d}{dt} \hat \E \bigg[ \int_0^t |\widetilde Z_s|^2 \frac{2\langle \zeta_s,d\zeta_s \rangle + d[\zeta]_s}{2(\rho_s+\gamma^2)^{1/2}} \bigg] 
        &\leq -\beta \hat{\E} \bigg[ \frac{|\zeta_t|^2 |\widetilde Z_t|^2}{(\rho_t+\gamma^2)^{1/2}} \bigg]
    \end{split}
    \end{align}
    for $t \geq 0$. Now, $Z$ follows the dynamics
    \begin{align*}
        dZ_t &= \Big(D_y g(x,\hat{Y}^{x,y+hv_2}_t)D_y\hat{Y}^{x,y+hv_2}_t v_1 - D_y g(x,\hat{Y}^{x,y}_t)D_y\hat{Y}^{x,y}_t v_1 \Big) dt \\
        &\qquad +\sum_{k=1}^{d_2} \Big(D_y \sigma_k(x,\hat{Y}^{x,y+hv_2}_t)D_y\hat{Y}^{x,y+hv_2}_t v_1 - D_y \sigma_k(x,\hat{Y}^{x,y}_t)D_y\hat{Y}^{x,y}_t v_1 \Big)d\hat{W}^k_t
        \\
        &= \Big(D_y g(x,\hat{Y}^{x,y+hv_2}_t) Z_t + (D_y g(x,\hat{Y}^{x,y+hv_2}_t) - D_y g(x,\hat{Y}^{x,y}_t)) \zeta_t \Big) dt
        \\
        &\qquad + \sum_{k=1}^{d_2}\Big(D_y \sigma_k(x,\hat{Y}^{x,y+hv_2}_t)Z_t + (D_y \sigma_k(x,\hat{Y}^{x,y+hv_2}_t)-D_y \sigma_k(x,\hat{Y}^{x,y}_t))\zeta_t\Big) d\hat W^k_t
    \end{align*}
    for $t \geq 0$. Hence, the first term decomposes into
    \begin{align*}
        &\hat \E \bigg[\int_0^t \frac{2\langle Z_s,dZ_s \rangle +d[Z]_s}{2(\rho_s+\gamma^2)^{1/2}}  \bigg] 
        \\
        & = \int_0^t \hat \E \bigg[ \frac{1}{(\rho_s + \gamma^2)^{1/2}} \Big( \langle  Z_s ,  D_y g(x,\hat{Y}^{x,y+hv_2}_s) Z_s \rangle + \frac{1}{2} \sum_{k=1}^{d_2} |D_y \sigma_k(x,\hat Y^{x,y+hv_2}_s) Z_s|^2 \Big) \bigg] ds
        \\
        &\qquad + \int_0^t \hat \E \bigg[ \frac{1}{(\rho_s + \gamma^2)^{1/2}} \langle Z_s, (D_y g(x,\hat{Y}^{x,y+hv_2}_s) - D_y g(x,\hat{Y}^{x,y}_s)) \zeta_s \rangle  \bigg] ds
        \\
        &\qquad  + \int_0^t \sum_{k=1}^{d_2} \hat \E \bigg[ \frac{1}{(\rho_s + \gamma^2)^{1/2}} \langle D_y \sigma_k(x,\hat Y^{x,y+hv_2}_s) Z_s, (D_y \sigma_k(x,\hat{Y}^{x,y+hv_2}_s)-D_y \sigma_k(x,\hat{Y}^{x,y}_s))\zeta_s \rangle  \bigg] ds 
        \\
        &\qquad  + \frac{1}{2} \int_0^t \sum_{k=1}^{d_2} \hat \E \bigg[ \frac{1}{(\rho_s + \gamma^2)^{1/2}} |(D_y \sigma_k(x,\hat{Y}^{x,y+hv_2}_s)-D_y \sigma_k(x,\hat{Y}^{x,y}_s))\zeta_s|^2  \bigg] ds \\
        &\qquad + \sum_{k=1}^{d_2} \hat \E\bigg[ \int_0^t \frac{ \langle Z_s , D_y \sigma_k(x,\hat{Y}^{x,y+hv_2}_s)Z_s + (D_y \sigma_k(x,\hat{Y}^{x,y+hv_2}_s)-D_y \sigma_k(x,\hat{Y}^{x,y}_s))\zeta_s \rangle}{(\rho_s + \gamma^2)^{1/2}}  d \hat W^k_s\bigg]
    \end{align*}
    for $t \geq 0$. Again, the term inside the Brownian integral is an $L^2$-integrand and the corresponding expectation vanishes. Thus, by Young's inequality there exists $C^{(4)}_\beta > 0$ satisfying
    \begin{align}
    \label{___D2yPhi_8}
    \begin{split}
        &\frac{d}{dt} \hat \E \bigg[\int_0^t \frac{2\langle Z_s,dZ_s \rangle +d[Z]_s}{2(\rho_s+\gamma^2)^{1/2}}  \bigg] 
        \\
        &\leq
        -\beta \hat \E \bigg[ \frac{|Z_t|^2}{(\rho_t + \gamma^2)^{1/2}} \bigg] 
        + (C^{(2)} + d_2  (C^{(2)})^2) \hat \E \bigg[ \frac{|Z_t||\widetilde Z_t||\zeta_t|}{(\rho_t + \gamma^2)^{1/2}} \bigg] 
        + d_2 (C^{(2)})^2 \hat \E \bigg[ \frac{|\widetilde Z_t|^2|\zeta_t|^2}{(\rho_t + \gamma^2)^{1/2}} \bigg] 
        \\
        &\leq -\frac{\beta}{2} \hat \E \bigg[ \frac{|Z_t|^2}{(\rho_t + \gamma^2)^{1/2}} \bigg] + C^{(4)}_\beta \hat \E \bigg[ \frac{|\widetilde Z_t|^2|\zeta_t|^2}{(\rho_t + \gamma^2)^{1/2}} \bigg] \\
        &\leq -\frac{\beta}{2} \hat \E \bigg[ \frac{|Z_t|^2}{(\rho_t + \gamma^2)^{1/2}} \bigg] + C^{(4)}_\beta \| \widetilde Z_t \|_{L^2} \| \zeta_t\|_{L^2}.
    \end{split}
    \end{align}
    As the right-hand-side of \eqref{___D2yPhi_4} is continuously differentiable, $t \mapsto \hat \E [(\rho_t+\gamma^2)^{1/2}]$ is too. Thus, setting $C^{(5)}_\beta := C^{(3)} + C^{(4)}_\beta$ and substituting all terms by \eqref{___D2yPhi_4.5}, \eqref{___D2yPhi_5}, \eqref{___D2yPhi_6}, \eqref{___D2yPhi_7}, and \eqref{___D2yPhi_8} yields
    \begin{align*}
        \frac{d}{dt} \hat \E [(\rho_t+\gamma^2)^{1/2}] &\leq - \frac{\beta}{2} \hat \E \bigg[ \frac{|Z_t|^2}{(\rho_t+\gamma^2)^{1/2}} \bigg] -2\beta \hat \E \bigg[ \frac{|\zeta_t|^2|\widetilde Z_t|^2}{(\rho_t+\gamma^2)^{1/2}} \bigg] +C^{(5)}_\beta \|\widetilde Z_t\|_{L^2} \|\zeta_t\|_{L^2} - \frac{dR_t}{dt} \\
        &\leq - \frac{\beta}{2} \hat \E \bigg[ \frac{\rho_t+\gamma^2}{(\rho_t+\gamma^2)^{1/2}}\bigg] + \frac{\beta }{2}  \hat \E \bigg[ \frac{\gamma^2}{(\rho_t+\gamma^2)^{1/2}}\bigg] + C^{(5)}_\beta \|\widetilde Z_t\|_{L^2} \|\zeta_t\|_{L^2}\\
        &\leq -\frac{\beta}{2}\hat \E [(\rho_t+\gamma^2)^{1/2}] + \frac{\beta \gamma} {2} + C^{(5)}_\beta \|\widetilde Z_t\|_{L^2} \|\zeta_t\|_{L^2}
    \end{align*}
    for $t \geq 0$. Applying Grönwall's differential lemma and Lemmas~\ref{lem:frozenmoment}, \ref{___lem_DyFlow}, we have  for a constant $C^{(6)}_\beta >0$ that is independent of $\gamma > 0$,
    \begin{align*}
        \hat \E [(\rho_t+\gamma^2)^{1/2}] 
        &\leq e^{-(\beta / 2)t}(h^2 |v_1|^2|v_2|^2 + \gamma^2)^{1/2}  + \int_0^t e^{-(\beta/2) (t-s)}\Big(\frac{\beta \gamma}{2} + C^{(5)}_{\beta} e^{-2\beta s}|h||v_1||v_2| \Big) ds \\
        &\leq C^{(6)}_{\beta}e^{-(\beta / 2)t} |h||v_1||v_2| + \gamma 
    \end{align*}
    for $t \geq 0$. As the choice of $\gamma >0$ was arbitrary, we finally obtain
    \begin{align*}
        \bigg\| \frac{D_y \hat{Y}^{x,y+hv_2}_t v_1 -D_y \hat{Y}^{x,y}_t v_1 }{h} \bigg\|_{L^1} \leq \liminf_{\gamma \to 0} \frac{\hat \E [(\rho_t + \gamma^2)^{1/2}]}{|h|} \leq C^{(6)}_{\beta}e^{-(\beta / 2)t} |v_1||v_2| 
    \end{align*} 
    and letting $h \to 0$, we have by Lemma \ref{___lem_DyFlow}
    \begin{align*}
        \| D^2_y \hat{Y}^{x,y}_t (v_1,v_2) \|_{L^1} \leq C^{(6)}_{\beta}e^{-(\beta / 2)t} |v_1||v_2| 
    \end{align*} 
    for each $t \geq 0$. 
    Set $C^{(7)}_\beta := C^{(2)} C^{(6)}_\beta +C^{(2)}$. Applying the preceding bounds to \eqref{___D2yPhi_1} and  \eqref{___D2yPhi_3}, respectively, yields
    \begin{align}\label{___D2yPhi_9}
    \begin{split}
        &\bigg| \frac{ D_y F(t,x,y+hv_2)v_1 - D_y F(t,x,y)v_1}{h} \bigg| 
        \\
        & \leq C^{(2)} \bigg\| \frac{D_y \hat{Y}^{x,y+hv_2}_t v_1 -D_y \hat{Y}^{x,y}_t v_1 }{h} \bigg\|_{L^1} + C^{(2)} \bigg\| \frac{\hat{Y}^{x,y+hv_2}_t - \hat{Y}^{x,y}_t}{h} \bigg\|_{L^2} \|D_y \hat Y^{x,y}_t v_1\|_{L^2}\\
        & \leq C^{(7)}_\beta e^{-(\beta /2)t} |v_1| |v_2|
    \end{split}
    \end{align}
    as well as 
    \begin{align}\label{___D2yPhi_9.5}
    \begin{split}
        |D^2_y F(t,x,y)(v_1,v_2)| &\leq C^{(2)} \|D_y^2\hat{Y}^{x,y}_t(v_1,v_2)\|_{L^1} + C^{(2)} \|D_y \hat{Y}^{x,y}_t v_1\|_{L^2}\|D_y \hat{Y}^{x,y}_t v_2\|_{L^2}
        \\
        &\leq C^{(7)}_\beta e^{-(\beta /2)t}|v_1||v_2|
    \end{split}
    \end{align}
    for $x \in \R^{d_1}$, $y ,v_1,v_2 \in \R^{d_2}$, $t \geq 0$, and $h \in (-1,1) \setminus \{0\}$.
    By the theorem of dominated convergence, \eqref{___D2yPhi_9} implies the existence of 
    \begin{align*}
        D_y[D_y \Phi(x,\cdot)v_1](y)v_2 = \lim\limits_{h \to 0} \frac{D_y \Phi(x,y +hv_2)v_1 - D_y \Phi(x,y)v_1}{h} = -\int_0^\infty D^2_y F(t,x,y)(v_1,v_2) dt
    \end{align*}
    while \eqref{___D2yPhi_9.5} implies the continuity of the expression on the left in $x,y$ such that the second order total derivative
    \begin{align*}
        D_y^2 \Phi(x,y) (v_1,v_2) = D_y^2 \Phi(x,y) (v_2,v_1)=D_y[D_y \Phi(x,\cdot)v_1](y)v_2
    \end{align*}
    exists for all $x \in \R^{d_1}$, $y,v_1,v_2 \in \R^{d_2}$. Finally, \eqref{___D2yPhi_9.5} also gives the bound
    \begin{align*}
        |D_y^2 \Phi(x,y) (v_1,v_2)| \leq \frac{2 C^{(7)}_{\beta}}{\beta} |v_1||v_2|
    \end{align*}
    for all $x\in\R^{d_1}$ and $y,v_1,v_2 \in \R^{d_2}$.
\end{proof}

\begin{lemma} \label{lem:DxPhi}
Let $\beta>0$ and assume that $(\hat Y^x_t)_{t \ge 0}$ is $2$-dissipative with constant $\beta$ and $f$ is $L_f$-Lipschitz. Under Assumption~\ref{assu:regularity}, $\Phi$ is differentiable in $x$, its derivative is jointly continuous on $\R^{d_1+d_2}$, and for all $\alpha>0$, there exists a constant $C_{\alpha,\beta} >0$ satisfying
    \begin{align*}
        |D_x \Phi (x,y)| \leq C_{\alpha,\beta} (1+|y|^\alpha)
    \end{align*}
    for all $x \in \R^{d_1}$ and $y \in \R^{d_2}$.
\end{lemma}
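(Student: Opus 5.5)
The plan is to avoid differentiating $\mu_x$ in $x$ and to work only with
$F(t,x,y)=P^x_t[f(x,\cdot)](y)=\hat\E[f(x,\hat Y^{x,y}_t)]$, as in \eqref{___DyPhi_F_def}. Since $\mu_x$ is the $W_p$-limit of $\mathcal L(\hat Y^{x,y}_s)$ (Lemma~\ref{lem:invariantmoment}), we have $\bar f(x)=\lim_{s\to\infty}F(s,x,y)$. By the Markov property,
\begin{align*}
\bar f(x)-F(t,x,y)=\lim_{s\to\infty}\Big(\hat\E\big[F(t,x,\hat Y^{x,y}_{s})\big]-F(t,x,y)\Big).
\end{align*}
I will bound the $x$-derivative of the bracket uniformly in $s$, with a bound of the form $Ce^{-ct}(1+|y|^\alpha)$. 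Uniform convergence of derivatives then gives differentiability of $\bar f(x)-F(t,x,\cdot)$ in $x$ with the same bound. Dominated convergence in $t$ finally yields $D_x\Phi$ and the estimate. It suffices to treat small $\alpha\in(0,1)$, since $1+|y|^{\alpha}\le 2(1+|y|^{\alpha'})$ for $\alpha\le\alpha'$.

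\textbf{Step 1: derivative of $F$ in $x$.} Arguing exactly as in Lemma~\ref{___lem_DyPhi}, with Lemma~\ref{___lem_DxFlow} in place of Lemma~\ref{___lem_DyFlow}, I obtain
$$D_xF(t,x,y)v=\hat\E\big[D_xf(x,\hat Y^{x,y}_t)v+D_yf(x,\hat Y^{x,y}_t)\,\zeta^{x,y,0,v}_t\big].$$
This expression is continuous in $(x,y)$ by Vitali's theorem. It is also bounded by $C|v|$ uniformly in $t$, because $D_xf$ and $D_yf$ are bounded and $\|\zeta^{x,y,0,v}_t\|_{L^2}\le C|v|$ by Lemma~\ref{___lem_DxFlow}. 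Moreover, $|D_yF(t,x,y)|\le L_fe^{-\beta t}$ by \eqref{___DyPhi_2}.

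\textbf{Step 2: Hölder-type contraction in $y$ (main obstacle).} I want to show
$$|D_xF(t,x,y)v-D_xF(t,x,y')v|\le Ce^{-ct}|y-y'|\,|v|.$$
For this I compare the pairs $(\hat Y^{x,y},\zeta^{x,y,0,v})$ and $(\hat Y^{x,y'},\zeta^{x,y',0,v})$. The difference $\zeta^{x,y,0,v}-\zeta^{x,y',0,v}$ solves a linear SDE with the dissipative part $D_yg$, $D_y\sigma_k$ from \eqref{___dissipativity_equiv}. Its forcing terms are
$$(D_xg(\hat Y^{y})-D_xg(\hat Y^{y'}))v \quad\text{and}\quad (D_yg(\hat Y^{y})-D_yg(\hat Y^{y'}))\zeta^{y'},$$
together with the analogous $\sigma_k$-terms. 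By the bounded second derivatives $D^2_{y,x}$ and $D^2_y$, these forcings are bounded by $C|\hat Y^{y}_t-\hat Y^{y'}_t|(|v|+|\zeta^{y'}_t|)$. By Lemma~\ref{lem:frozenmoment}(i), this is $\lesssim e^{-\beta t}|y-y'|$ in $L^2$ after applying Hölder. Here I need the $L^4$-moments of $\zeta$, obtained from Remark~\ref{rem:dissibigger}, or else the smoothed-norm trick from Lemma~\ref{___lem_D2yPhi} if only $2$-dissipativity is available. A Grönwall/Itô argument as in Lemma~\ref{___lem_DxFlow} then gives $\|\zeta^{y}_t-\zeta^{y'}_t\|_{L^1}\le Ce^{-ct}|y-y'||v|$. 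The $D^2_{y,x}f$ and $D^2_yf$ terms are handled similarly. Interpolating with the trivial bound $2C|v|$ from Step 1 yields
$$|D_xF(t,x,y)v-D_xF(t,x,y')v|\le Ce^{-c\alpha t}|y-y'|^\alpha|v|.$$

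\textbf{Step 3: conclusion.} I differentiate $\hat\E[F(t,x,\hat Y^{x,y}_s)]-F(t,x,y)$ in $x$ and get
$$\hat\E\big[D_xF(t,x,\hat Y^{x,y}_s)v-D_xF(t,x,y)v\big]+\hat\E\big[D_yF(t,x,\hat Y^{x,y}_s)\,D_x\hat Y^{x,y}_sv\big].$$
By Step 2 and Lemma~\ref{lem:frozenmoment}(iii), the first term is at most $Ce^{-c\alpha t}\hat\E|\hat Y^{x,y}_s-y|^\alpha|v|\le Ce^{-c\alpha t}(1+|y|^\alpha)|v|$. By $|D_yF|\le L_fe^{-\beta t}$ and the uniform-in-$s$ bound $\|D_x\hat Y_s v\|_{L^2}\le C|v|$ from Lemma~\ref{___lem_DxFlow}, the second term is at most $Ce^{-\beta t}|v|$. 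Both bounds are uniform in $s$ and continuous in $(x,y)$, so letting $s\to\infty$ gives
$$|D_x(\bar f(x)-F(t,x,y))|\le Ce^{-c\alpha t}(1+|y|^\alpha),$$
together with joint continuity. Integrating over $t\in[0,\infty)$ by dominated convergence gives differentiability of $\Phi$ in $x$, joint continuity of $D_x\Phi$, and the claimed bound $|D_x\Phi(x,y)|\le C_{\alpha,\beta}(1+|y|^\alpha)$.
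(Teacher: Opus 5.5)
Your overall architecture is correct and is a somewhat more economical variant of the paper's.

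\textbf{Comparison of routes.} The paper introduces the joint semigroup $Q^{x,v}_t$ of $(\hat Y^{x,y},\zeta^{x,y,z,v})$. It proves a contraction estimate for $Q^{x,v}_t\varphi^x$, with $\varphi^x(y,z)=D_yf(x,y)z$, in both $y$ and $z$, and then uses the Markov property of the pair. You instead use only the Markov property of $\hat Y$, namely $F(t+s,x,y)=\hat\E[F(t,x,\hat Y^{x,y}_s)]$, together with the chain rule. This absorbs the $z$-direction into the bound $|D_yF(t,x,\cdot)|\le L_fe^{-\beta t}$ from \eqref{___DyPhi_2}, so only the contraction of $y\mapsto\zeta^{x,y,0,v}_t$ remains to be proved. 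Your uniform-in-$s$ bound on $D_xF(t+s,x,y)v-D_xF(t,x,y)v$ is a Cauchy estimate for $t\mapsto D_xF(t,\cdot,y)$ in the supremum norm. Say explicitly that this Cauchy property is what lets you pass to the limit in the derivative, as the paper does via completeness of $C^1_b$. Mere boundedness would only give Lipschitz continuity of $\bar f$.

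\textbf{The $L^4$ route in Step 2 fails.} Remark~\ref{rem:dissibigger} only gives $q$-dissipativity for $q<2+2\beta/L_\sigma^2$. Under $2$-dissipativity you therefore have neither an $L^4$-contraction of $\tilde Z_t=\hat Y^{x,y}_t-\hat Y^{x,y'}_t$ nor a uniform $L^4$-bound on $\zeta_t$. For example, take $d_2=1$, $g(x,y)=-by$, $\sigma(x,y)=sy$ with $s^2/2<b<3s^2/2$; then $\hat\E|\tilde Z_t|^4=|y-y'|^4e^{(6s^2-4b)t}$, which grows. So your claim that the forcing $C|\tilde Z_t|(|v|+|\zeta_t|)$ is $\lesssim e^{-\beta t}|y-y'|$ in $L^2$ is unjustified, and a plain $L^2$-Grönwall argument does not close.

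\textbf{The paper's fix.} The paper interpolates \emph{before} estimating. The functions $D_xg$, $D_yg$, $D_x\sigma_k$, $D_y\sigma_k$, $D_xf$, $D_yf$ are bounded and Lipschitz in $y$, hence $\alpha$-Hölder. The forcing is then $\le C|\tilde Z_t|^\alpha(|v|+|\zeta_t|)$, and Hölder's inequality gives $\hat\E[|\tilde Z_t|^{2\alpha}|\zeta_t|^2]\le\|\tilde Z_t\|_{L^{2+2\alpha}}^{2\alpha}\|\zeta_t\|_{L^{2+2\alpha}}^2$. This needs only $(2+2\alpha)$-dissipativity, which is exactly what the remark supplies for small $\alpha$. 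It yields the Hölder contraction directly, and is why the lemma is stated with $|y|^\alpha$.

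\textbf{Your fallback needs a modification.} The smoothed-norm trick of Lemma~\ref{___lem_D2yPhi} also works with $2$-dissipativity alone, but not verbatim. Because $\zeta^{x,y',0,v}$ solves an inhomogeneous equation (forcing $D_xg\,v$ and $D_x\sigma_kv$), terms like $|\tilde Z|^2|v|^2/(\rho+\gamma^2)^{1/2}$ cannot be controlled with $\rho=|Z|^2+|\tilde Z|^2|\zeta|^2$. Take $\rho=|Z|^2+|\tilde Z|^2(|\zeta|^2+|v|^2)$ instead. Then every non-dissipative drift contribution to $(\rho+\gamma^2)^{1/2}$ is bounded by $C|\tilde Z|(|\zeta|+|v|)$. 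Its expectation is at most $C\|\tilde Z_t\|_{L^2}(\|\zeta_t\|_{L^2}+|v|)\lesssim e^{-\beta t}|y-y'||v|$, so you even obtain a Lipschitz $L^1$-contraction.

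With either fix, Steps 1 and 3 go through as you describe.
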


\begin{proof}  
Clearly, it suffices to prove the statement for sufficiently small $\alpha$.
Define $F$ as in \eqref{___DyPhi_F_def}. We will show differentiability w.r.t. $x$ for $F$, $\bar{f}$, and $\Phi$ in that order.

Given an arbitrary but fixed $y \in \R^{d_2}$, it has already been established in \eqref{eq:weg9h34t79h8gd} that $F(t,\cdot,y)$ converges to $\bar{f}$ for $t \to \infty$ under the supremum norm. To prove differentiability of $F(t,\cdot,y)-\bar{f}$, we will show that $F$ is continuously differentiable in $x \in \R^{d_1}$ and 
\begin{align} \label{eq:24fh39fh9222}
    \sup_{s \ge t} \|F(t,\cdot,y)-F(s,\cdot,y)\|_{C^1_b} \overset{t \to \infty}{\longrightarrow} 0,
\end{align}
where for a bounded function $h \colon \R^{d_1} \to \R^{d_1}$ with bounded continuous first derivatives we define
\begin{align*}
    \|\varphi\|_{C^1_b} := |\varphi|_{\sup} + \sup_{x\in \R^{d_1}}|D\varphi(x)|.
\end{align*}
Then by completeness of the space $C_b^1$ and by $\bar{f}$ being the only possible cluster point of the sequence, we obtain that $\bar f$ is continuously differentiable. Finally, if the convergence in \eqref{eq:24fh39fh9222} is exponential, the derivative of $\Phi$ in $x$ also exists by the dominated convergence theorem. 

Regarding the differentiability of $F$, one has
\begin{align}\label{___DxPhi_1}
\begin{split}
    &\frac{F(t,x+hv,y) - F(t,x,y)}{h} 
    \\
    &= \hat \E \bigg[ \frac{f(x+hv,\hat{Y}^{x+hv,y}_t)-f(x,\hat{Y}^{x,y}_t)}{h} \bigg] 
    \\
    &= \hat \E \bigg[ \frac{f(x+hv,\hat{Y}^{x+hv,y}_t) - f(x+hv,\hat{Y}^{x,y}_t)}{h} \bigg] + \hat \E \bigg[ \frac{f(x+hv,\hat{Y}^{x,y}_t) - f(x,\hat{Y}^{x,y}_t)}{h} \bigg]
\end{split}
\end{align}
for $t \geq 0$, $h\in(-1,1) \setminus\{0\}$, $x,v \in \R^{d_1}$, and $y \in \R^{d_2}$.
Now, fix $t \geq 0$, $x,v \in \R^{d_1}$, and $y \in \R^{d_2}$. Then the term inside the latter expectation has an almost sure bound $L_f |v|$ that is uniform in $h$ so that the dominated convergence theorem gives
\begin{align}\label{___DxPhi_2}
    \bigg\| \frac{f(x+hv,\hat{Y}^{x,y}_t) - f(x,\hat{Y}^{x,y}_t)}{h} - D_x f(x,\hat{Y}^{x,y}_t)v \bigg\|_{L^1} \overset{h \to 0}{\longrightarrow} 0.
\end{align}
The convergence of the former expectation is shown with the same argument used to prove Lemmas~\ref{___lem_DyPhi} and \ref{___lem_D2yPhi}. By the mean value theorem
\begin{align*}
    f(x',y_2) - f(x',y_1)=
    \int_0^1 D_y f(x', y_1 +r(y_2-y_1)) (y_2-y_1) dr, \quad x'\in\R^{d_1},\ y_1,y_2 \in \R^{d_2},
\end{align*}
we obtain
\begin{align*}
    &\bigg\| \frac{f(x+hv,\hat{Y}^{x+hv,y}_t) - f(x+hv,\hat{Y}^{x,y}_t)}{h} - D_y f(x,\hat Y^{x,y}_t) D_x \hat Y^{x,y}_t v \bigg\|_{L^1} 
    \\
    &\le \int_0^1 \bigg\| \Big(D_y f(x+hv, \hat{Y}^{x,y}_t +r(\hat{Y}^{x+hv,y}_t-\hat{Y}^{x,y}_t))-D_y f(x,\hat Y^{x,y}_t)\Big)\frac{\hat{Y}^{x+hv,y}_t-\hat{Y}^{x,y}_t}{h} \bigg\|_{L^1} dr 
    \\
    &\qquad + \bigg\| D_y f(x,\hat Y^{x,y}_t) \bigg( \frac{\hat{Y}^{x+hv,y}_t-\hat{Y}^{x,y}_t}{h} - D_x \hat Y^{x,y}_t v \bigg) \bigg\|_{L^1}
\end{align*}
for $h \in (-1,1)\setminus\{0\}$. 
Using a bound analogous to the one derived in \eqref{___DyPhi_0}, Lemmas~\ref{___lem_DxFlow} and \ref{lem:frozenmoment} give
\begin{align}\label{___DxPhi_3}
    \bigg\| \frac{f(x+hv,\hat{Y}^{x+hv,y}_t) - f(x+hv,\hat{Y}^{x,y}_t)}{h} - D_y f(x,\hat Y^{x,y}_t) D_x \hat Y^{x,y}_t v \bigg\|_{L^1} \overset{h \to 0}{\longrightarrow} 0.
\end{align}
Hence, combining \eqref{___DxPhi_1}, \eqref{___DxPhi_2}, and \eqref{___DxPhi_3} yields the existence of
\begin{align}\label{___DxPhi_4}
\begin{split}
     D_x F(t,x,y) v &= \lim\limits_{h\to 0} \frac{F(t,x+hv,y) - F(t,x,y)}{h} 
     \\
     &=\hat \E [D_x f(x,\hat Y^{x,y}_t)v ] + \hat \E[ D_y f(x, \hat Y^{x,y}_t ) \, D_x \hat Y^{x,y}_t v]
\end{split}
\end{align}
for all $t \geq 0$, $x,v\in\R^{d_1}$, and $y \in \R^{d_2}$.

By the boundedness of the derivatives, Assumption~\ref{assu:regularity}, and Lemma~\ref{___lem_DxFlow}, the terms in the expectations in \eqref{___DxPhi_4} admit a uniform $L^2$-bound that is uniform in $x,y$. As they are also continuous in $(x,y) \in \R^{d_1+d_2}$ with respect to convergence in probability, Vitali's convergence theorem yields for each $t \geq 0$ and $v \in \R^{d_1}$,
\begin{align*}
    &\|D_x f(x',\hat Y^{x',y'}_t)v - D_x f(x,\hat Y^{x,y}_t)v\|_{L^1} \\
    &+ \|D_y f(x', \hat Y^{x',y'}_t ) D_x \hat Y^{x',y'}_t v-D_y f(x, \hat Y^{x,y}_t ) D_x \hat Y^{x,y}_t v \|_{L^1} \longrightarrow 0
\end{align*}
for all $(x,y) \in \R^{d_1+d_2}$ and thus continuity of $D_x F(t,\cdot,\cdot)v$.

Next, we show exponential convergence in \eqref{eq:24fh39fh9222} using the expression in \eqref{___DxPhi_4}.
Let $C^{(1)} >0$ be the constant bounding all required derivatives of the coefficients, and set $C^{(2)} :=2C^{(1)}$. Then the first order derivatives are Hölder-continuous for arbitrary $\alpha \in [0,1]$ by
\begin{align*}
    |\phi(x,y_1) - \phi(x,y_2)| 
    &\leq (|\phi(x,y_1)| + |\phi(x,y_2)|)^{1-\alpha} |\phi(x,y_1) - \phi(x,y_2)|^{\alpha} \\
    &\leq ( 2C^{(1)} )^{1-\alpha} (C^{(1)})^\alpha |y_1-y_2|^{\alpha} \\
    &\leq C^{(2)} |y_1-y_2|^{\alpha}, \qquad x \in \R^{d_1},\ y_1,y_2 \in \R^{d_2}
\end{align*}
for any $\phi = D_xf,\ D_y f,\ D_x g,\ D_y g,\ D_x \sigma_k,\ D_y \sigma_k$, $k = 1,\dots,d_2$. 
Now, for the first term on the right-hand side in \eqref{___DxPhi_4} we get for $ t\geq 0$, $x,v\in \R^{d_1}$, $y \in \R^{d_2}$, and $\alpha \in (0,1]$
\begin{align}\label{___DxPhi_5}
\begin{split}
    &\bigg|\hat{\E}[D_xf(x,\hat{Y}^{x,y}_t)v] - \int D_xf(x,y')v \, \mu_x(dy') \bigg|\\
    &\leq \int \hat{\E} \Big[| D_xf(x,\hat{Y}^{x,y}_t)v-D_xf(x,\hat{Y}^{x,y'}_t)v |\Big] \, \mu_x(dy') \\
    & \leq C^{(2)}|v|\int\hat{\E}[|\hat{Y}^{x,y}_t-\hat{Y}^{x,y'}_t|^{\alpha}] \, \mu_x(dy') \\
    & \leq C^{(2)} |v| e^{-\beta \alpha t} \bigg(|y|+\int |y'| \,  \mu_x(dy')\bigg)^{\alpha} \\
    & \leq C^{(3)}_\beta e^{-\beta \alpha t} |v| (1+|y|^{\alpha}),
\end{split}
\end{align}
 where we have used Lemma~\ref{lem:frozenmoment} and the constant $C^{(3)}_\beta >0$ is due to Lemma~\ref{lem:invariantmoment}. 

In order to show the convergence of the second component, we introduce for each $x,v\in\R^{d_1}$ the Markov semigroup $(Q^{x,v}_t)_{t \geq 0}$ associated to the joint dynamics of $(\hat{Y}^{x,y}, \zeta^{x,y,z,v})$ satisfying
\begin{align*}
    \begin{cases}
        d \hat{Y}^{x,y}_t = g(x,\hat{Y}^{x,y}_t)dt + \sigma(x,\hat{Y}^{x,y}_t) d\hat{W}_t
        \\
        d \zeta^{x,y,z,v}_t = D_x g(x,\hat{Y}^{x,y}_t) v dt + D_y g(x,\hat{Y}^{x,y}_t) \zeta^{x,y,z,v}_t dt \\ \qquad \qquad \qquad  + \sum_{k=1}^{d_2} \Big( D_x \sigma_k(x,\hat{Y}^{x,y}_t) v + D_y \sigma_k(x,\hat{Y}^{x,y}_t) \zeta^{x,y,z,v}_t \Big) d \hat{W}^k_t
    \end{cases}
\end{align*}
and starting in $(\hat{Y}^{x,y}_0, \zeta^{x,y,z,v}_0) = (y,z) \in \R^{d_2+d_2}$. We will prove its exponential stability. Let $x,v\in \R^{d_1}$, $y_1,y_2,z_1,z_2 \in \R^{d_2}$ and define
\begin{align*}
     Z_t := \zeta^{x,y_1,z_1,v}_t-\zeta^{x,y_2,z_2,v}_t, \quad \widetilde Z_t := \hat Y^{x,y_1}_t - \hat Y^{x,y_2}_t, \quad \zeta_t := \zeta^{x,y_2,z_2,v}_t
\end{align*} 
for $t \geq 0$.
By application of Itô's formula
{\allowdisplaybreaks[4]
\begin{align}
\hat{\E} [|Z_t|^2]
&=
|z_1-z_2|^2 + 2\int_0^t \hat{\E} [\langle Z_s, D_x g(x,\hat Y^{x,y_1}_s)v-D_x g(x,\hat Y^{x,y_2}_s)v \rangle] ds \notag\\
&\qquad+
2\int_0^t \hat{\E} [\langle Z_s, D_y g(x,\hat Y^{x,y_1}_s)\zeta^{x,y_1,z_1,v}_s -D_y g(x,\hat Y^{x,y_2}_s) \zeta^{x,y_2,z_2,v}_s\rangle] ds \notag\\
&\qquad+
\sum_{k=1}^{d_2} \int_0^t \hat{\E} [| D_x \sigma_k(x,\hat Y^{x,y_1}_s)v-D_x \sigma_k(x,\hat Y^{x,y_2}_s)v \notag\\
&\qquad\qquad
+
D_y \sigma_k(x,\hat Y^{x,y_1}_s)\zeta^{x,y_1,z_1,v}_s -D_y \sigma_k(x,\hat Y^{x,y_2}_s)\zeta^{x,y_2,z_2,v}_s |^2] ds \notag\\
&\qquad+2\sum_{k=1}^{d_2}\hat{\E} \bigg[ \int_0^t \langle Z_s,D_x \sigma_k(x,\hat Y^{x,y_1}_s)v-D_x \sigma_k(x,\hat Y^{x,y_2}_s)v \notag\\
&\qquad \qquad+
    D_y \sigma_k(x,\hat Y^{x,y_1}_s)\zeta^{x,y_1,z_1,v}_s 
   -D_y \sigma_k(x,\hat Y^{x,y_2}_s)\zeta^{x,y_2,z_2,v}_s \rangle d \hat W^k_s \bigg]  \notag\\
&=
|z_1-z_2|^2 + 
\int_0^t \hat{\E} \bigg[2\langle Z_s, D_y g(x,\hat Y^{x,y_1}_s) Z_s \rangle + \sum_{k=1}^{d_2}  |D_y \sigma_k(x,\hat Y^{x,y_1}_s) Z_s|^2 \bigg]ds \label{eq:34f934hf39gh9333} \\
&\qquad+
2\int_0^t \hat{\E} [\langle Z_s, D_x g(x,\hat Y^{x,y_1}_s)v-D_x g(x,\hat Y^{x,y_2}_s)v \rangle] ds \notag\\
&\qquad+
2\int_0^t \hat{\E} [\langle Z_s, ( D_y g(x,\hat Y^{x,y_1}_s)-D_y g(x,\hat Y^{x,y_2}_s) ) \zeta_s\rangle] ds \notag\\
&\qquad + 2\sum_{k=1}^{d_2} \int_0^t \hat{\E}[\langle D_y \sigma_k(x,\hat Y^{x,y_1}_s)Z_s , D_x \sigma_k(x,\hat Y^{x,y_1}_s)v-D_x \sigma_k(x,\hat Y^{x,y_2}_s)v \notag\\
&\qquad \qquad \qquad + (D_y \sigma_k(x,\hat Y^{x,y_1}_s)-D_y \sigma_k(x,\hat Y^{x,y_2}_s))\zeta_s \rangle ]ds \notag\\
&\qquad + \sum_{k=1}^{d_2}\int_0^t \hat{\E}[| D_x \sigma_k(x,\hat Y^{x,y_1}_s)v-D_x \sigma_k(x,\hat Y^{x,y_2}_s)v \notag\\
&\qquad \qquad \qquad + (D_y \sigma_k(x,\hat Y^{x,y_1}_s)-D_y \sigma_k(x,\hat Y^{x,y_2}_s))\zeta_s |^2]ds \notag
\end{align}
}
for $t \geq 0$ where the Brownian integrals are true martingales with expectation $0$ as they are applied to $L^2$-integrands. 
Hence, we can take the derivative and by Young's and Hölder's inequalities there exists $C^{(4)}_\beta > 0$ such that
\begin{align*}
    \frac{d}{dt} \hat{\E} [|Z_t|^2]
    &\leq -2\beta \hat{\E} [|Z_t|^2] + (2 C^{(2)} +2 d_2 C^{(1)}C^{(2)}) |v|\hat{\E} [|Z_t| \, |\widetilde Z_t|^{\alpha}] \\
    &\qquad + (2 C^{(2)} +2 d_2 C^{(1)}C^{(2)}) \hat{\E} [ |Z_t| \,  |\widetilde Z_t|^{\alpha} \, |\zeta_t| ] \\
    &\qquad + 2 d_2 (C^{(2)})^{2} |v|^2 \hat{\E}[|\widetilde Z_t|^{2\alpha}] + 2 d_2 (C^{(2)})^2 \hat{\E}[|\widetilde Z_t|^{2\alpha} \, |\zeta_t|^2] \\
    &\leq -\beta \hat{\E} [|Z_t|^2] + C^{(4)}_{\beta} \hat{\E}[|\widetilde Z_t|^{2\alpha}] \, |v|^2  + C^{(4)}_{\beta} \hat{\E}[|\widetilde Z_t|^{2\alpha} \, |\zeta_t |^2]  \\
    &\leq - \beta \hat{\E} [|Z_t|^2] + C^{(4)}_{\beta} \|\widetilde Z_t\|_{L^{2}}^{2\alpha} \,  |v|^2 + C^{(4)}_{\beta} \| \widetilde Z_t \|_{L^{2+2\alpha}}^{2\alpha} \,  \|\zeta_t \|_{L^{2+2\alpha}}^2 
\end{align*}
holds for $t \geq 0$ and $\alpha \in (0,1)$. 
By Remark~\ref{rem:dissibigger}, there exists $\alpha^* \in (0,1)$ such that $\hat Y^x$ remains $p$-dissipative for some $p =2+2\alpha >2$ with the coefficient $\beta /2$ for all $\alpha \in (0,\alpha^*]$ so that
\begin{align*}
    \|\widetilde Z_t\|_{L^{2}} \leq \| \widetilde Z_t \|_{L^{2+2\alpha}} \leq e^{-(\beta/2) t} |y_1-y_2|
\end{align*}
for all $t \geq 0$ by Lemma \ref{lem:frozenmoment} and so that there exists an $\alpha$-dependent constant $C^{(5)}_{\alpha ,\beta} \geq 1$
with
\begin{align*}
    \|\zeta_t \|_{L^{2+2\alpha}} 
    &\leq (e^{-(2+2\alpha)(\beta/4)t}|z_2|^{2+2\alpha} + C^{(5)}_{\alpha ,\beta}|v|^{2+2\alpha})^{1/(2+2\alpha)} \leq (|z_2|^2 + C^{(5)}_{\alpha ,\beta} |v|^2)^{1/2}
\end{align*}
for all $t \geq 0$ by Lemma \ref{___lem_DxFlow}.
Hence, by substituting these inequalities into \eqref{eq:34f934hf39gh9333}, one obtains by Grönwall's differential lemma a constant $C^{(6)}_{\alpha,\beta} \geq 1$ depending on $\alpha$ with
\begin{align*}
    &\hat{\E}[|\zeta^{x,y_1,z_1,v}_t-\zeta^{x,y_2,z_2,v}_t|^2]\\ 
    &\leq e^{-\beta t}|z_1-z_2|^2 +C^{(4)}_\beta|y_1-y_2|^{2 \alpha} ((1+C^{(5)}_{\alpha,\beta})|v|^2+|z_2|^2) \int_0^t e^{-\beta (t-s)} e^{-\beta \alpha s} ds \\
    &\leq C^{(6)}_{\alpha,\beta} e^{-\beta \alpha t}(|z_1-z_2|^2 +|y_1-y_2|^{2 \alpha} (|v|^2+|z_2|^2) )
\end{align*}
for all $ t \geq 0$ and for each $\alpha \in (0,\alpha^*]$.
Next, consider the family of functions
\begin{align*}
    \varphi^x \colon \R^{d_2 + d_2}\to \R^{d_1},\ \varphi^x(y,z) = D_y f(x,y)z, \qquad x\in\R^{d_1}.
\end{align*}
Then for each $\alpha \in (0,\alpha^*]$ there exists $C^{(7)}_{\alpha, \beta} >0$ with
\begin{align*}
    &|Q^{x,v}_t \varphi^x(y_1,z_1) - Q^{x,v}_t \varphi^x(y_2,z_2)| \\
    &\leq \hat{\E}[|D_yf(x,\hat{Y}^{x,y_1}_t)\zeta^{x,y_1,z_1,v}_t - D_yf(x,\hat{Y}^{x,y_2}_t)\zeta^{x,y_2,z_2,v}_t|] \\
    &\leq \hat{\E}[|D_yf(x,\hat{Y}^{x,y_1}_t)||\zeta^{x,y_1,z_1,v}_t -\zeta^{x,y_2,z_2,v}_t|]+\hat{\E}[|D_yf(x,\hat{Y}^{x,y_1}_t) - D_yf(x,\hat{Y}^{x,y_2}_t)||\zeta^{x,y_2,z_2,v}_t|]  \\
    &\leq C^{(1)}\|\zeta^{x,y_1,z_1,v}_t -\zeta^{x,y_2,z_2,v}_t\|_{L^1} + C^{(2)}\|\hat Y^{x,y_1}_t-\hat Y^{x,y_2}_t\|_{L^2}^{\alpha} \|\zeta^{x,y_2,z_2,v}_t\|_{L^2} \\
    &\leq C^{(1)} C^{(6)}_{\alpha,\beta} e^{-(\beta/2) \alpha t}(|z_1-z_2| +|y_1-y_2|^{\alpha} (|v|+|z_2|) ) + C^{(2)}  e^{-\beta \alpha t} |y_1-y_2|^{\alpha}(|z_2|+C^{(5)}_{\alpha, \beta} |v|) \\
    &\leq C^{(7)}_{\alpha,\beta} e^{-(\beta/2)\alpha t} (|z_1-z_2|+ |y_1-y_2|^{\alpha}(|v|+|z_2|))
\end{align*}
for all $t \geq 0$. It follows that for each $v \in \R^{d_1}$, $y\in \R^{d_2}$, $\alpha \in (0,\alpha^*]$, the maps \begin{align*}
\R^{d_1} \to \R^{d_1},\ x \mapsto \hat{\E}[D_y f(x,\hat{Y}^{x,y}_t) D_x \hat{Y}^{x,y}_t v] = Q^{x,v}_t \varphi^x(y,0)
\end{align*}
is a Cauchy sequence indexed with $t \geq 0$
since
\begin{align*}
\begin{split}
    &\sup_{x\in\R^{d_1}}|\hat{\E}[D_y f(x,\hat{Y}^{x,y}_{t+t_0}) D_x \hat{Y}^{x,y}_{t+t_0} v] - \hat{\E}[D_y f(x,\hat{Y}^{x,y}_t) D_x \hat{Y}^{x,y}_t v]|\\
    &\leq \sup_{x\in\R^{d_1}}\hat{\E}[|Q^{x,v}_t \varphi^x(\hat{Y}^{x,y}_{t_0},\zeta^{x,y,0,v}_{t_0}) -Q^{x,v}_t \varphi^x(y,0)|] \\
    &\leq C^{(7)}_{\alpha,\beta} e^{-(\beta/2)\alpha t} \sup_{x\in\R^{d_1}}\hat{\E}[|\zeta^{x,y,0,v}_{t_0}|+ |\hat{Y}^{x,y}_{t_0} -y|^{\alpha}|v|]\\
    &\leq C^{(8)}_{\alpha,\beta} e^{-(\beta/2)\alpha t} |v|(1+|y|^{\alpha})
\end{split}
\end{align*}
for all $t, t_0 \geq 0$. By completeness of the space of bounded continuous functions this sequence is also exponentially convergent with
\begin{align}\label{___DxPhi_6}
\begin{split}
    &\sup_{x\in\R^{d_1}}| \lim\limits_{t_0 \to \infty} \hat{\E}[D_y f(x,\hat{Y}^{x,y}_{t_0}) D_x \hat{Y}^{x,y}_{t_0} v] - \hat{\E}[D_y f(x,\hat{Y}^{x,y}_t) D_x \hat{Y}^{x,y}_t v]|\\
    &\leq \sup_{x\in\R^{d_1}} \Big( \lim_{t_0 \to \infty} | \hat{\E}[D_y f(x,\hat{Y}^{x,y}_{t+t_0}) D_x \hat{Y}^{x,y}_{t+t_0} v] -\hat{\E}[D_y f(x,\hat{Y}^{x,y}_t) D_x \hat{Y}^{x,y}_t v]| \Big) \\
    &\leq C^{(8)}_{\alpha,\beta} e^{-(\beta/2)\alpha t} |v|(1+|y|^{\alpha}) 
\end{split}
\end{align}
for $t \geq 0$.
Since $(F(t,\cdot,y))_{t \geq 0}$ is a Cauchy sequence with respect to $|\cdot|_{C^1_b}$ and $C^1_b$ is complete, a bounded limiting function $\bar{F}_y(\cdot)$ in $C^1_b$ exists for each $y \in \R^{d_2}$. However by \eqref{eq:weg9h34t79h8gd},
\begin{align*}
    |\bar{f}-F(t,\cdot,y)|_\infty \longrightarrow 0, \qquad t \to \infty,
\end{align*}
which implies $\bar{f}=\bar{F}_y(\cdot) \in C^1_b$ for any $y \in \R^{d_2}$ and in particular, the continuous differentiability of $\bar f$. 

By combining \eqref{___DxPhi_5} and \eqref{___DxPhi_6}, their derivatives vanish exponentially by
\begin{align}\label{___DxPhi_7}
\begin{split}
    \bigg| \frac{(\bar f (x+hv)-F(t,x+hv,y)) - (\bar f(x)-F(t,x,y) )}{h} \bigg|
    &\leq \sup_{x'\in\R^{d_1}}| D_x \bar{f}(x')v- D_x F(t,x',y)v | \\
    &\leq (C^{(3)}_\beta + C^{(8)}_{\alpha,\beta} )e^{-(\beta/2)\alpha t}|v|(1+|y|^{\alpha})
\end{split}
\end{align}
for $\alpha \in (0,\alpha^*]$, $t \geq 0$, $x,v \in \R^{d_1}$, and $y \in \R^{d_2}$. The uniform boundedness in $h$ implies differentiability of $\Phi$ in $x$ by the theorem of dominated convergence with the following bound
\begin{align*}
    |D_x \Phi(x,y)v|
    &\leq \int_0^\infty |D_x \bar{f}(x)v - D_x F(t,x,y)v| dt\\
    &\leq (C^{(3)}_\beta + C^{(8)}_{\alpha,\beta} ) |v|(1+|y|^{\alpha}) \int_0^\infty e^{-(\beta/2)\alpha t} dt\\
    &\leq C^{(9)}_{\alpha,\beta} |v|(1+|y|^{\alpha}).
\end{align*}
for each $\alpha \in (0,\alpha^*]$.
At last, \eqref{___DxPhi_7} yields an integrable upper bound for each neighborhood of any pair $(x,y) \in \R^{d_1+d_2}$ so that $D_x \Phi$ is continuous, again by the dominated convergence theorem.
\end{proof}

\section*{Acknowledgments} The authors acknowledge funding by the Deutsche Forschungsgemeinschaft (DFG, German Research Foundation) – CRC/TRR 388 ''Rough Analysis, Stochastic Dynamics and Related Fields'' – Project ID 516748464.

\bibliographystyle{alpha}
\bibliography{literature}
		
\end{document}